\documentclass[10pt,a4paper]{article}

\usepackage{latexsym,amsfonts,amsmath,amssymb,mathrsfs,url,color,amsthm}
\usepackage{graphicx}

\newtheorem{theorem}{Theorem}
\newtheorem{lemma}{Lemma}
\newtheorem{algorithm}{Algorithm}
\newtheorem{remark}{Remark}

\newtheorem{definition}{Definition}
\newtheorem{corollary}{Corollary}

\usepackage[dvipdfm,
linkbordercolor={1 0 0},
citebordercolor={0 1 0},
urlbordercolor={0 1 1}]{hyperref}

\newcommand{\rd}{\,\mathrm{d}}
\newcommand{\rtr}{\,\mathrm{tr}}
\newcommand{\bsc}{\boldsymbol{c}}
\newcommand{\bsx}{\boldsymbol{x}}
\newcommand{\bsy}{\boldsymbol{y}}
\newcommand{\bsz}{\boldsymbol{z}}
\newcommand{\bsk}{\boldsymbol{k}}
\newcommand{\bsl}{\boldsymbol{l}}
\newcommand{\bsr}{\boldsymbol{r}}

\newcommand{\bsq}{\boldsymbol{q}}
\newcommand{\bsalpha}{\boldsymbol{\alpha}}

\newcommand{\bseta}{\boldsymbol{\eta}}
\newcommand{\bsgamma}{\boldsymbol{\gamma}}
\newcommand{\bssigma}{\boldsymbol{\sigma}}

\newcommand{\bszero}{\boldsymbol{0}}

\newcommand{\nat}{\mathbb{N}}

\newcommand{\RR}{\mathbb{R}}

\newcommand{\FF}{\mathbb{F}}

\newcommand{\Dcal}{\mathcal{D}}
\newcommand{\Ecal}{\mathcal{E}}

\newcommand{\Lcal}{\mathcal{L}}

\newcommand{\wal}{\mathrm{wal}}

\allowdisplaybreaks

\begin{document}

\title{Fast construction of higher order digital nets for numerical integration in weighted Sobolev spaces\thanks{This work was supported by Grant-in-Aid for JSPS Fellows No.24-4020.}}

\author{Takashi Goda\thanks{Graduate School of Engineering, The University of Tokyo, 7-3-1 Hongo, Bunkyo-ku, Tokyo 113-8656 ({\tt goda@frcer.t.u-tokyo.ac.jp}).}}

\date{\today}
\maketitle

\begin{abstract}
Higher order digital nets are special classes of point sets for quasi-Monte Carlo rules which achieve the optimal convergence rate for numerical integration of smooth functions. An explicit construction of higher order digital nets was proposed by Dick, which is based on digitally interlacing in a certain way the components of classical digital nets whose number of components is a multiple $ds$ of the dimension $s$. In this paper we give a fast computer search algorithm to find good classical digital nets suitable for interlaced components by using polynomial lattice point sets.

We consider certain weighted Sobolev spaces of smoothness of arbitrarily high order, and derive an upper bound on the mean square worst-case error for digitally shifted higher order digital nets. Employing this upper bound as a quality criterion, we prove that the component-by-component construction can be used efficiently to find good polynomial lattice point sets suitable for interlaced components. Through this approach we are able to get some tractability results under certain conditions on the weights. Fast construction using the fast Fourier transform requires the construction cost of $O(dsN \log N)$ operations using $O(N)$ memory, where $N$ is the number of points and $s$ is the dimension. This implies a significant reduction in the construction cost as compared to higher order polynomial lattice point sets. Numerical experiments confirm that the performance of our constructed point sets often outperforms those of higher order digital nets with Sobol' sequences and Niederreiter-Xing sequences used for interlaced components, indicating the usefulness of our algorithm.
\end{abstract}
Keywords:\; Quasi-Monte Carlo, numerical integration, higher order digital nets, weighted Sobolev spaces

\section{Introduction}
\label{intro}

In this paper we study multivariate integration of smooth functions defined over the $s$-dimensional unit cube $[0,1)^s$,
  \begin{align*}
    I(f) =  \int_{[0,1)^s}f(\bsx)\rd\bsx .
  \end{align*}
Quasi-Monte Carlo (QMC) rules approximate $I(f)$ by
  \begin{align*}
    Q(f;P_N) = \frac{1}{N}\sum_{n=0}^{N-1}f(\bsx_n) ,
  \end{align*}
where $P_N=\{\bsx_0,\ldots, \bsx_{N-1}\}\subset [0,1)^s$ is a carefully chosen point set consisting of $N$ points. Here $P_N$ is understood as a multiset, i.e., a set in which the multiplicity of elements matters. We refer to \cite{DKS13,Lem09} for general information on QMC rules. The Koksma-Hlawka inequality ensures that QMC rules typically achieve an integration error of $O(N^{-1+\delta})$ with arbitrarily small $\delta>0$ when the integrand has a bounded variation in the sense of Hardy and Krause. Two prominent ways to generate good point sets are integration lattices, see for example \cite{Nie92a,SJ94}, and digital nets, see for example \cite{DP10,Nie92a}. 

Digital nets in a prime base $b$ are defined as follows. Let $\FF_b:=\{0,\ldots,b-1\}$ be the finite field with $b$ elements equipped with addition modulo $b$ and multiplication modulo $b$. For each point $\bsx_n=(x_{n,1},\ldots,x_{n,s})\in P_N$, we denote the $b$-adic expansion of $x_{n,j}$ by
  \begin{align*}
    x_{n,j} = \frac{\xi_{n,j,1}}{b}+\frac{\xi_{n,j,2}}{b^2}+\cdots ,
  \end{align*}
where $\xi_{n,j,1},\xi_{n,j,2},\ldots \in \FF_b$ for $1\le j\le s$, which is unique in the sense that infinitely many of the $\xi_{n,j,i}$ are different from $b-1$. Through this expansion, each point $\bsx_n$ can be identified with one element in $\FF_b^{s\times \infty}$, and thus, a point set $P_N$ can be identified with a subset of $\FF_b^{s\times \infty}$. Then $P_N$ is called a digital net in base $b$ if it can be identified with an {\em $\FF_b$-linear subspace} of $\FF_b^{s\times \infty}$ with possible multiplicity. If we add one condition that $\xi_{n,j,m+1}=\xi_{n,j,m+2}=\ldots=0$, that is, every $x_{n,j}$ must be represented by $m$-digit $b$-adic fraction for a positive integer $m$ such that $N=b^m$, this definition is in accordance with the standard definition of digital nets based on generating matrices as introduced by Niederreiter \cite{Nie92a}. Thus, we consider a more general class of point sets as digital nets in this paper.

The quality of a digital net is often measured by the so-called $t$-value, which is given as follows. Let $P_{b^m}$ be a digital net in base $b$. If every elementary interval of the form
  \begin{align*}
    \prod_{j=1}^{s}\left[ \frac{a_j}{b^{d_j}},\frac{a_j+1}{b^{d_j}}\right) ,
  \end{align*}
for every choice of non-negative integers $a_1,\ldots,a_s$ and $d_1,\ldots,d_s$ such that $0\le a_j<b^{d_j}$ and $d_1+\cdots+d_s=m-t$, contains exactly $b^t$ points, we call $P_{b^m}$ a digital $(t,m,s)$-net in base $b$. Obviously, a digital net with a smaller $t$-value has better distribution properties.

Explicit constructions of digital $(t,m,s)$-nets with small $t$-value have been proposed by many researchers, such as Sobol' \cite{Sob67}, Faure \cite{Fau82}, Niederreiter \cite{Nie88} and Niederreiter and Xing \cite{NX01} as well as others, see \cite[Section~8]{DP10} for more information. Polynomial lattice point sets, first proposed in \cite{Nie92b}, are a special construction of digital nets based on rational functions over $\FF_b$. QMC rules using polynomial lattice point sets are called polynomial lattice rules. In order to construct good polynomial lattice point sets, we need to find good polynomials over $\FF_b$. Seen in this light, polynomial lattice point sets are not fully explicit, but provide us with a flexibility in adjusting point sets to a given problem at hand. In order to make a clear distinction from higher order digital nets that shall be introduced below, we use the word {\em classical digital nets} as a term referring to digital $(t,m,s)$-nets and polynomial lattice point sets throughout this paper.

In order to achieve an improved convergence rate, there have been several attempts to exploit some smoothness of integrands in QMC rules. In \cite{Owe97}, Owen regards a function as smooth if its order $s$ mixed partial derivative satisfies a H\"{o}lder condition of order $0<\beta\le 1$, and for this concept of smoothness, he proved that scrambled classical digital nets can achieve an improved convergence of the root mean square error of $O(N^{-3/2+\delta})$. As a different concept of smoothness, by regarding a function as smooth if its partial mixed derivatives up to order $\alpha\ge 2$, $\alpha \in \nat$, in each variable are square integrable, it is possible to achieve a further improved convergence rate, as done in \cite{BD09,Dic07,Dic08,Dic11,Gxx1,Gxx2,GDxx} and also in this paper. Therefore, throughout this paper, smoothness of a function is understood in the latter sense, where an integer $\alpha\ge 2$ represents the smoothness, and a function of smoothness $\alpha$ is called an $\alpha$-smooth function.

In order for QMC rules to achieve the optimal order of convergence for $\alpha$-smooth functions, special classes of digital nets were proposed by Dick in \cite{Dic07,Dic08} based on analyzing the decay of the Walsh coefficients of $\alpha$-smooth functions. Since QMC rules based on these special digital nets achieve higher order convergence than the typical convergence $O(N^{-1+\delta})$, say $O(N^{-\alpha+\delta})$, we call these special digital nets {\em higher order digital nets}. Higher order digital nets in a reproducing kernel Hilbert space consisting of $\alpha$-smooth periodic functions were first studied in \cite{Dic07}, the result in which was later extended in \cite{Dic08} to the case in a normed Walsh space consisting of $\alpha$-smooth non-periodic functions. It was shown in \cite{BD09} that the result from \cite{Dic08} can be also achieved in a reproducing kernel Hilbert space that is different from what is studied in \cite{Dic07}. More specifically, a weighted unanchored Sobolev space of order $\alpha \ge 2$, $\alpha\in \nat$, was considered as a reproducing kernel Hilbert space and it was proven that QMC rules using higher order digital nets achieve the optimal order of convergence not only of the worst-case error but also of the mean square worst-case error with respect to a random digital shift. More recently, QMC rules using higher order scrambled digital nets were studied in \cite{Dic11,GDxx} to achieve the optimal rate of the root mean square error for $\alpha$-smooth functions.

So far, there are two algorithms for constructing higher order digital nets. One is given in \cite{DP07} by generalizing the definition of polynomial lattice point sets. These nets are called higher order polynomial lattice point sets, and QMC rules using higher order polynomial lattice point sets are called higher order polynomial lattice rules. Regarding a computer search algorithm for finding higher order polynomial lattice rules which achieve the optimal rate of convergence, the component-by-component (CBC) construction is studied in \cite{BDGP11,BDLNP12}. Even with efficient calculation of the worst-case error as given in \cite{BDLNP12}, however, a construction cost of $O(\alpha sN^{\alpha } \log N)$ operations using $O(N^{\alpha})$ memory is required. Recently, in \cite{Gxx1}, the author achieved a cost reduction to $O(\alpha sN^{\alpha /2} \log N)$ operations using $O(N^{\alpha/2})$ memory by considering higher order polynomial lattice point sets over $\FF_2$ which are randomized by a digital shift and then folded by the baker's transformation \cite{Hi02}. Nevertheless, the exponential dependence on $\alpha$ of the construction cost degrades the availability of these rules as $\alpha$ increases.

The other algorithm for constructing higher order digital nets, proposed in \cite{Dic07,Dic08}, applies a digit interlacing function to the components of classical digital nets whose number of components is a multiple of the dimension. Let $\{\bsy_0,\ldots,\bsy_{b^m-1}\}$ be a digital $(t',m,ds)$-net in base $b$ for $d\in \nat$. For each point $\bsy_n=(y_{n,1},\ldots, y_{n,ds})$, we denote the $b$-adic expansion of $y_{n,j}$ by
  \begin{align*}
    y_{n,j}=\frac{\eta_{n,j,1}}{b}+\frac{\eta_{n,j,2}}{b^2}+\cdots ,
  \end{align*}
where $\eta_{n,j,1},\eta_{n,j,2},\ldots \in \FF_b$ for $1\le j\le ds$. Then a higher order digital net $\{\bsx_0,\ldots,\bsx_{b^m-1}\}$ is given as follows: Each coordinate of $\bsx_n=(x_{n,1},\ldots, x_{n,s})$ is obtained by interlacing the components $y_{n,d(j-1)+1},\ldots,y_{n,dj}$ digitally as
  \begin{align*}
    x_{n,j} = \sum_{a=1}^{\infty}\sum_{r=1}^{d}\frac{\eta_{n,d(j-1)+r,a}}{b^{r+(a-1)d}} ,
  \end{align*}
for $1\le j\le s$. Dick \cite{Dic08} proved that QMC rules using $\{\bsx_0,\ldots,\bsx_{b^m-1}\}$ can achieve a worst-case error of order $N^{-\min(\alpha,d)+\delta}$ for any $\delta>0$.

Hence, the major advantage of the latter algorithm is that we can use existing digital $(t,m,s)$-nets. The disadvantage, on the other hand, is that the $t$-value of digital $(t,m,s)$-nets increases at least linearly in $s$, so that it becomes hard to obtain a bound on the worst-case error independent of the dimension. This observation motivates us to replace existing digital $(t',m,ds)$-nets by polynomial lattice point sets in dimension $ds$ that are suitably chosen for interlaced components. For this purpose, however, we need to find good polynomials over $\FF_b$ by using some computer search algorithm.

In a similar context, there exists a result in \cite{GDxx} where scrambled polynomial lattice point sets are used as interlaced components to construct higher order scrambled digital nets. It was shown there that we are able to obtain a good dependence on the dimension of the root mean square error. Moreover, as compared to higher order polynomial lattice rules, the computational cost for the fast CBC construction could be significantly reduced to $O(dsN \log N)$ operations using $O(N)$ memory. Thus, as a further study of \cite{GDxx}, it is worth investigating whether good polynomial lattice point sets can be efficiently obtained for interlaced components to achieve the optimal rate either of the worst-case error or the mean square worst-case error with respect to some randomization, while obtaining a good dependence on the dimension and reducing the construction cost as compared to higher order polynomial lattice point sets. This is exactly what we are interested in here.

In this paper, we consider weighted unanchored Sobolev spaces of order $\alpha$ as studied in \cite{BD09} and derive a computable upper bound on the mean square worst-case error for digitally shifted higher order digital nets. Employing this upper bound as a quality criterion, we prove that the CBC construction requires a construction cost of $O(dsN \log N)$ operations using $O(N)$ memory to find good polynomial lattice point sets that are used for interlaced components. Thus, our obtained construction cost is the same as that in \cite{GDxx} and is much lower than those in \cite{BDGP11,BDLNP12,Gxx1}.  The main difference of this study from \cite{GDxx} is twofold: One is that instead of scrambling we consider a randomization of point sets by using a digital shift that is computationally much cheaper to implement. The other is that we employ the mean square worst-case error instead of the mean square error as an error criterion. We note here that a small mean square worst-case error implies the existence of a digitally shifted point set which yields a reasonably small worst-case error. Numerical experiments in Subsection \ref{subsec:test} show that randomization by a digital shift can be used in place of scrambling as a cheap way to obtain some statistical estimate on the integration error. As a continuation of this paper, we shall study construction algorithms of deterministic higher order digital nets by using polynomial lattice point sets for interlaced components in another paper \cite{Gxx2}.

The remainder of this paper is organized as follows. In the next section, we introduce the necessary background and notation including Walsh functions, polynomial lattice rules, higher order digital nets and their randomization, and weighted unanchored Sobolev spaces of order $\alpha \ge 2$. In Section \ref{bound}, we study the mean square worst-case error for digitally shifted higher order digital nets with the aim to derive a computable upper bound on the error. We show in Section \ref{cbc} that the CBC construction can be used to obtain good polynomial lattice point sets as interlaced components. QMC rules using digitally shifted higher order digital nets thus constructed achieve the optimal rate of convergence.  Furthermore, we show that it is possible to get some tractability results under certain conditions on the weights and that the fast CBC construction using the fast Fourier transform, as introduced in \cite{NC06a,NC06b}, is also available in our context. This enables us to proceed the CBC construction with $O(dsN \log N)$ operations using $O(N)$ memory. Finally, we conclude this paper with numerical experiments in Section \ref{exp}.

\section{Preliminaries}
\label{pre}
Throughout this paper, we use the following notation. Let $\nat$ be the set of positive integers and let $\nat_0:=\nat \cup \{0\}$. The operators $\oplus$ and $\ominus$ denote the digitwise addition and subtraction modulo $b$, respectively. That is, for $x, x'\in [0,1)$ with $b$-adic expansions $x=\sum_{i=1}^{\infty}\xi_i b^{-i}$ and $x'=\sum_{i=1}^{\infty}\xi'_i b^{-i}$ where $\xi_i,\xi'_i\in \FF_b$, $\oplus$ and $\ominus$ are defined as
  \begin{align*}
    x\oplus x' = \sum_{i=1}^{\infty}\eta_i b^{-i}\ \mbox{and}\ x\ominus x' = \sum_{i=1}^{\infty}\eta'_i b^{-i},
  \end{align*}
where $\eta_i=\xi_i+\xi'_i \pmod b$ and $\eta'_i=\xi_i-\xi'_i \pmod b$, respectively. Similarly, we define digitwise addition and subtraction for non-negative integers based on those $b$-adic expansions. In case of vectors in $[0,1)^s$ or $\nat_0^s$, the operators $\oplus$ and $\ominus$ are applied componentwise. Further we shall use the notation $I_s:=\{1,\ldots,s\}$ for $s\in \nat$ for simplicity.

\subsection{Walsh functions}
Walsh functions were first introduced in \cite{W23} for the case $b=2$ and were generalized later, see for example \cite{C55}. We refer to \cite[Appendix~A]{DP10} for general information on Walsh functions. We first give the definition for the one-dimensional case.
\begin{definition}
Let $b\ge 2$ be an integer and $\omega_b=\exp(2\pi \sqrt{-1}/b)$. Let us denote the $b$-adic expansion of $k\in \nat_0$ by $k = \kappa_0+\kappa_1 b+\cdots +\kappa_{a-1}b^{a-1}$ with $\kappa_0,\ldots,\kappa_{a-1}\in\FF_b$. Then the $k$-th $b$-adic Walsh function ${}_b\wal_k: [0,1)\to \{1,\omega_b,\ldots, \omega_b^{b-1}\}$ is defined as
  \begin{align*}
    {}_b\wal_k(x)
    =
    \omega_b^{\xi_1\kappa_0+\cdots+\xi_a\kappa_{a-1}} ,
  \end{align*}
for $x\in [0,1)$ with its $b$-adic expansion $x=\xi_1 b^{-1} + \xi_2 b^{-2} + \cdots$, that is unique in the sense that infinitely many of the $\xi_i$ are different from $b-1$.
\end{definition}
This definition can be generalized to the multi-dimensional case.

\begin{definition}
For $s\ge 2$, let $\bsx=(x_1,\ldots, x_s)\in [0,1)^s$ and $\bsk=(k_1,\ldots, k_s)\in \nat_0^s$. Then the $\bsk$-th $b$-adic Walsh function ${}_b\wal_{\bsk}: [0,1)^s \to \{1,\omega_b,\ldots, \omega_b^{b-1}\}$ is defined as
  \begin{align*}
    {}_b\wal_{\bsk}(\bsx)
    =
    \prod_{j=1}^s {}_b\wal_{k_j}(x_j) .
  \end{align*}
\end{definition}
Since we shall always use Walsh functions in a fixed base $b$, we omit the subscript and simply write $\wal_k$ or $\wal_{\bsk}$ throughout this paper. We note that the system $\{\wal_{\bsk}: \bsk\in \nat_0^s\}$ is a complete orthonormal system in $\Lcal_2([0,1)^s)$.

\subsection{Polynomial lattice rules}
In this subsection, we introduce the definition of polynomial lattice rules by following the exposition in \cite[Section~10]{DP10}. For a prime $b$, we denote by $\FF_b((x^{-1}))$ the field of formal Laurent series over $\FF_b$. Every element of $\FF_b((x^{-1}))$ has the form
  \begin{align*}
    L = \sum_{l=w}^{\infty}t_l x^{-l} ,
  \end{align*}
for some integer $w$ and $t_l\in \FF_b$. Further, we denote by $\FF_b[x]$ the set of all polynomials over $\FF_b$. For $m\in \nat$, we define the mapping $v_m$ from $\FF_b((x^{-1}))$ to the interval $[0,1)$ by
  \begin{align*}
    v_m\left( \sum_{l=w}^{\infty}t_l x^{-l}\right) =\sum_{l=\max(1,w)}^{m}t_l b^{-l}.
  \end{align*}

We often identify a non-negative integer $k$ whose $b$-adic expansion is given by $k=\kappa_0+\kappa_1 b+\cdots +\kappa_{a-1} b^{a-1}$ with the polynomial over $\FF_b[x]$ as $k(x)=\kappa_0+\kappa_1 x+ \cdots +\kappa_{a-1} x^{a-1}$.  For $\bsk=(k_1,\ldots, k_s)\in (\FF_b[x])^s$ and $\bsq=(q_1,\ldots, q_s)\in (\FF_b[x])^s$, we define the inner product as
  \begin{align*}
     \bsk \cdot \bsq =\sum_{j=1}^{s}k_j q_j \in \FF_b[x] ,
  \end{align*}
and we write $q\equiv 0 \pmod p$ if $p$ divides $q$ in $\FF_b[x]$. Using this notation, a polynomial lattice rule is defined as follows.

\begin{definition}\label{def:polynomial_lattice}
For $m, s \in \nat$, let $p \in \FF_b[x]$ with $\deg(p)=m$ and let $\bsq=(q_1,\ldots,q_s) \in (\FF_b[x])^s$. A polynomial lattice point set is a point set consisting of $b^m$ points given by
  \begin{align*}
    \bsx_n &:= \left( v_m\left( \frac{n(x)q_1(x)}{p(x)} \right) , \ldots , v_m\left( \frac{n(x)q_s(x)}{p(x)} \right) \right) \in [0,1)^s ,
  \end{align*}
for $0\le n<b^m$. A QMC rule using this point set is called a {\em polynomial lattice rule} with generating vector $\bsq$ and modulus $p$.
\end{definition}
Here we note that $p$ and $\bsq$ are not explicitly given and should be chosen properly by users of this rule. In the remainder of this paper, we denote by $P_{b^m,s}(\bsq,p)$ a polynomial lattice point set, implicitly meaning that $\deg(p)=m$ and the number of components in the vector $\bsq$ is $s$. In the subsequent analysis, the concept of the so-called dual net shall play an important role.

\begin{definition}\label{def:dual_net}
Let $m\in \nat$. For $k\in \nat_0$ with its $b$-adic expansion $k=\kappa_0+\kappa_1 b+\cdots+\kappa_{a-1}b^{a-1}$, we denote the truncation of the associated polynomial $k(x)$ by
  \begin{align*}
     \rtr_m(k)(x)=\kappa_0+\kappa_1 x+\cdots+\kappa_{\min(a,m)-1}x^{\min(a,m)-1} .
  \end{align*}
For a polynomial lattice point set $P_{b^m,s}(\bsq,p)$, the dual net is defined as
  \begin{align*}
     D^{\perp}(\bsq,p) := \{ \bsk=(k_1,\ldots,k_s)\in \nat_0^{s}:\ \rtr_m(\bsk) \cdot \bsq\equiv 0 \pmod p \} ,
  \end{align*}
where $\rtr_m(\bsk)=(\rtr_m(k_1),\ldots, \rtr_m(k_s))$.
\end{definition}

Furthermore, we shall use the following two lemmas in this paper. The first lemma bridges between a polynomial lattice point set $P_{b^m,s}(\bsq,p)$ and Walsh functions. The proof is straightforward from the above definition of $D^{\perp}(\bsq,p)$, \cite[Lemma~10.6]{DP10} and \cite[Lemma~4.75]{DP10}. The second lemma implies that any polynomial lattice point set $P_{b^m,s}(\bsq,p)$ is a digital net. The proof is also straightforward from \cite[Lemma~4.72]{DP10}.

\begin{lemma}\label{lemma:dual_walsh}
Let $P_{b^m,s}(\bsq,p)$ be a polynomial lattice point set and $D^{\perp}(\bsq,p)$ be its dual net. Then we have
  \begin{align*}
     \frac{1}{b^m}\sum_{n=0}^{b^m-1}\wal_{\bsk}(\bsx_n)=\left\{ \begin{array}{ll}
     1 & \mbox{if}\ \bsk\in D^{\perp}(\bsq,p) , \\
     0 & \mbox{otherwise} . \\
     \end{array} \right.
  \end{align*}
\end{lemma}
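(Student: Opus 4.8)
Here is how I would prove Lemma~\ref{lemma:dual_walsh}.

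\textbf{Strategy.} The plan is to reduce the statement to the general character-sum identity for digital nets and then identify the digital dual net with $D^{\perp}(\bsq,p)$. I would first use the fact that any polynomial lattice point set $P_{b^m,s}(\bsq,p)$ is a digital net in base $b$ over $\FF_b$ (this is the content of the second lemma below, which follows from \cite[Lemma~4.72]{DP10}); denote its generating matrices by $C_1,\ldots,C_s\in\FF_b^{m\times m}$. For an arbitrary digital net with generating matrices $C_1,\ldots,C_s$, \cite[Lemma~4.75]{DP10} gives
\begin{align*}
  \frac{1}{b^m}\sum_{n=0}^{b^m-1}\wal_{\bsk}(\bsx_n)
  =
  \begin{cases}
    1 & \text{if } C_1^{\top}\vec{\kappa}_1 + \cdots + C_s^{\top}\vec{\kappa}_s = \vec 0 \text{ in } \FF_b^{m}, \\
    0 & \text{otherwise},
  \end{cases}
\end{align*}
where $\vec{\kappa}_j$ is the column vector formed by the first $m$ base-$b$ digits of $k_j$ (equivalently, by the coefficients of $\rtr_m(k_j)$). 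This already explains the dichotomy $\{0,1\}$: it is just the orthogonality relation for the nontrivial versus trivial additive character of $\FF_b^{m}$ applied to the $\FF_b$-linear map $n\mapsto(\bsx_n$-digits$)$. So everything reduces to showing that the condition $\sum_{j=1}^s C_j^{\top}\vec{\kappa}_j=\vec 0$ is equivalent to $\rtr_m(\bsk)\cdot\bsq\equiv 0\pmod p$, i.e. to $\bsk\in D^{\perp}(\bsq,p)$ in the sense of Definition~\ref{def:dual_net}.

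\textbf{Translating the digit condition into the polynomial congruence.} For this step I would write out the generating matrices explicitly: the rows of $C_j$ are read off from the Laurent expansions of $x^{-1}q_j(x)/p(x), x^{-2}q_j(x)/p(x),\ldots$, so that the $l$-th digit of $v_m(n(x)q_j(x)/p(x))$ is an $\FF_b$-linear functional of the coefficient vector of $n$ determined by $q_j$ and $p$. Pairing these digits against the coefficients of $\rtr_m(k_j)$ and summing over $j$, the condition $\sum_j C_j^{\top}\vec{\kappa}_j=\vec 0$ states precisely that, for every $n(x)$ with $\deg n<m$, the coefficients of $x^{-1},\ldots,x^{-m}$ in the Laurent series of $n(x)\bigl(\sum_{j=1}^s\rtr_m(k_j)(x)q_j(x)\bigr)/p(x)$ all vanish. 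Using $\deg p=m$, $\deg\rtr_m(k_j)<m$, and (without loss of generality) $\deg q_j<m$, a degree count shows that the vanishing of these $m$ coefficients for \emph{all} such $n$ is equivalent to $p(x)\mid\sum_{j=1}^s\rtr_m(k_j)(x)q_j(x)$ in $\FF_b[x]$, which is exactly the defining condition of $D^{\perp}(\bsq,p)$. This equivalence is the statement of \cite[Lemma~10.6]{DP10}, which I would cite rather than re-derive. Combining the two steps gives the claim.

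\textbf{Main obstacle.} The only delicate point is the bookkeeping in the second step: one must track exactly which Laurent coefficients are pinned down by $v_m$ (only the indices $1$ through $m$), and argue that forcing these finitely many ``middle'' coefficients of $n(x)\,\rtr_m(\bsk)(x)\cdot\bsq(x)/p(x)$ to vanish for every $n$ with $\deg n<m$ is the same as exact divisibility by $p$. The degree bound $\deg(\rtr_m(\bsk)\cdot\bsq)<2m$ together with $\deg p=m$ is what makes this equivalence hold, and it is precisely where \cite[Lemma~10.6]{DP10} does the work; invoking it keeps the argument short, with everything else reducing to the standard digital-net orthogonality of \cite[Lemma~4.75]{DP10}.
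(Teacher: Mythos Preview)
Your proposal is correct and follows exactly the route the paper indicates: the paper does not spell out a proof but simply states that the result is straightforward from the definition of $D^{\perp}(\bsq,p)$ together with \cite[Lemma~10.6]{DP10} and \cite[Lemma~4.75]{DP10}, which are precisely the two ingredients you invoke (with \cite[Lemma~4.72]{DP10} supplying the digital-net structure). Your expanded discussion of the ``main obstacle'' just makes explicit the content of \cite[Lemma~10.6]{DP10}, so there is nothing to add.
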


\begin{lemma}\label{lemma:group}
For a prime $b$, any polynomial lattice point set $P_{b^m,s}(\bsq,p)$ is homomorphic to an $\FF_b$-linear subspace of $\FF_b^{s\times \infty}$ with possible multiplicity.
\end{lemma}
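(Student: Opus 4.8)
The plan is to read off explicit generating matrices for $P_{b^m,s}(\bsq,p)$ from the Laurent expansions of the rational functions $q_j/p$, and then to observe that a point set described by generating matrices over the prime field $\FF_b$ is, essentially by construction, the homomorphic image of an $\FF_b$-vector space.

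First I would fix $j\in I_s$ and expand $q_j(x)/p(x)=\sum_{l\ge w_j}u^{(j)}_l x^{-l}\in\FF_b((x^{-1}))$. For $0\le n<b^m$ with $b$-adic digits $n=\nu_0+\nu_1 b+\cdots+\nu_{m-1}b^{m-1}$, identified with $n(x)=\sum_{i=0}^{m-1}\nu_i x^i$, the product $n(x)q_j(x)/p(x)$ has $l$-th Laurent coefficient $\sum_{i=0}^{m-1}\nu_i u^{(j)}_{l+i}$, which is an $\FF_b$-linear form in $(\nu_0,\ldots,\nu_{m-1})$. Applying $v_m$, the $l$-th $b$-adic digit of $x_{n,j}$ equals this form for $1\le l\le m$ and vanishes for $l>m$; hence the digit vector of $x_{n,j}$ is $C_j(\nu_0,\ldots,\nu_{m-1})^{\top}$ for a matrix $C_j\in\FF_b^{\infty\times m}$ with only finitely many nonzero rows. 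This is precisely the content of \cite[Lemma~4.72]{DP10}, namely that $P_{b^m,s}(\bsq,p)$ coincides with the digital net generated by $C_1,\ldots,C_s$.

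It then remains to recast this in the language of Section \ref{intro}. Since $b$ is prime, $\FF_b$ is a field and the carry-free digit addition $\oplus$ is exactly addition in the $\FF_b$-vector space of digit strings, so the map $\Phi\colon\FF_b^m\to\FF_b^{s\times\infty}$ sending a digit vector $\bsv$ to $(C_1\bsv,\ldots,C_s\bsv)$ is $\FF_b$-linear. Under the identification of each point with its string of $b$-adic digits, the multiset $P_{b^m,s}(\bsq,p)$ is exactly the image of $\Phi$, with multiplicities equal to the cardinalities of the fibres of $\Phi$; an image of a linear map between $\FF_b$-vector spaces is an $\FF_b$-linear subspace, which proves the claim.

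I do not anticipate a real obstacle here: the mathematical content is the linearity of the map $n\mapsto v_m(n(x)q_j(x)/p(x))$, which is a short computation, together with the bookkeeping that identifies the point multiset with the image of $\Phi$. The one point worth stressing is that primality of $b$ is what makes ``$\FF_b$-linear subspace'' meaningful in the first place, since for composite $b$ the digit strings under $\oplus$ form only a $\ZZ/b\ZZ$-module rather than a vector space.
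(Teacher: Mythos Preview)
Your proposal is correct and follows essentially the same route as the paper, which simply invokes \cite[Lemma~4.72]{DP10} without further detail. You have in fact spelled out what that citation contains---the explicit generating matrices from the Laurent expansion of $q_j/p$ and the resulting $\FF_b$-linearity of the digit map---so your write-up is a faithful elaboration of the paper's one-line reference.
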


\subsection{Higher order digital nets}
As mentioned in the previous section, QMC rules using higher order digital nets can exploit the smoothness of an integrand so that they achieve the optimal convergence rate of the integration error for $\alpha$-smooth functions, where $\alpha \ge 2$ is an integer. This result is based on a bound on the decay of the Walsh coefficients of smooth functions \cite{Dic08}. An explicit construction of higher order digital nets by means of a digit interlacing function was given in \cite{Dic07,Dic08}. Although we have already mentioned this function in the previous section, we describe the interlacing algorithm in more detail in the following.

Since digit interlacing is applied to each point separately, we use just one generic point to describe the procedure. For $s,d\in \nat$, let $\bsy= (y_{1},\ldots, y_{ds}) \in [0,1)^{ds}$ and let us denote the $b$-adic expansion of each coordinate $y_j$ by
  \begin{align*}
    y_j= \frac{\eta_{j,1}}{b} + \frac{\eta_{j,2}}{b^2} + \cdots,
  \end{align*}
which is unique in the sense that infinitely many digits are different from $b-1$. We then obtain a point $\bsx \in [0,1)^{s}$ by digitally interlacing non-overlapping $d$ components of $\bsy$ in the following way. Let $\bsx =(x_{1},\ldots, x_{s})$ where
  \begin{align*}
    x_j = \sum_{a=1}^{\infty}\sum_{r=1}^{d}\frac{\eta_{d(j-1)+r,a}}{b^{r+(a-1)d}},
  \end{align*}
for $1\le j\le s$. That is, for $a,r\in \nat$ with $1\le r\le d$, the $(r+(a-1)d)$-th digit in the $b$-adic expansion of $x_j$ equals the $a$-th digit in the $b$-adic expansion of $y_{d(j-1)+r}$. We denote the above mapping by $\Dcal_{d}: [0,1)^d \to [0,1)$ and we simply write $x_{j}=\Dcal_{d}(y_{d(j-1)+1},\ldots, y_{dj})$. Further we write $$\bsx=\Dcal_d(\bsy) := (\Dcal_d(y_{1},\ldots, y_d), \ldots, \Dcal_d(y_{d(s-1)+1},\ldots, y_{ds})),$$ when $\bsx$ is obtained by interlacing the components of $\bsy$. Thus it is obvious that in order to construct a good higher order digital net consisting of $N$ points in $[0,1)^{s}$, we need to choose suitable $N$ points in $[0,1)^{ds}$.

In this paper, we are interested in using polynomial lattice point sets as point sets in $[0,1)^{ds}$ that are used for interlaced components to construct higher order digital nets. For clarity, we give the definition of higher order digital nets based on polynomial lattice point sets.

\begin{definition}\label{def:interlacing_polynomial_lattice}
Let $b$ be a prime and let $m, s, d \in \nat$. Let $p \in \FF_b[x]$ with $\deg(p)=m$ and let $\bsq=(q_1,\ldots,q_{ds}) \in (\FF_b[x])^{ds}$. A higher order digital net consisting of $b^m$ points $\{\bsx_0, \ldots, \bsx_{b^m-1}\} \subset [0,1)^s$ is constructed as follows. For $0\le n<b^m$, the $n$-th point $\bsx_n$ is obtained by
  \begin{align*}
    \bsx_n = \Dcal_d(\bsy_n) ,
  \end{align*}
where the point $\bsy_n$ is the $n$-th point of a polynomial lattice point set in dimension $ds$, $P_{b^m,ds}(\bsq,p)$, which is given as
  \begin{align*}
    \bsy_n = \left( v_m\left( \frac{n(x) \, q_1(x)}{p(x)} \right) , \ldots , v_m\left( \frac{n(x) \, q_{ds}(x)}{p(x)} \right) \right) \in [0,1)^{ds}.
  \end{align*}
\end{definition}

In this construction algorithm, the search for good $b^m$ points in $[0,1)^{ds}$ has now been reduced to finding good polynomials $p$ and $\bsq=(q_1,\ldots,q_{ds})$.

Randomization of QMC point sets is useful to obtain some statistical information on the integration error. Especially for randomization of higher order digital nets, two algorithms have been discussed in the literature. One is a random digital shift, see \cite{BD09,Dic07}, and the other is a higher order scrambling that is a generalization of Owen's scrambling introduced in \cite{Owe95}, see \cite{Dic11,GDxx}. Since we are concerned with the former in this paper, we follow the exposition in \cite[Section~6]{Dic07} to introduce some basic concepts of a random digital shift.

Let $P_{N}=\{\bsx_0,\ldots, \bsx_{N-1}\}$ be a point set in  $[0,1)^s$ with $\bsx_n=(x_{n,1},\ldots, x_{n,s})$. Let $\bssigma=(\sigma_1,\ldots,\sigma_s)$ be a point in $[0,1)^s$ such that each $\sigma_j$ is independently and uniformly distributed in $[0,1)$. Then a digitally shifted point set $P_{N, \bssigma}=\{\bsz_0,\ldots, \bsz_{N-1}\}$ is given by
  \begin{align*}
    \bsz_n = \bsx_n\oplus \bssigma ,
  \end{align*}
for $0\le n<N$. In order to calculate the mean square worst-case error in the next section, the following lemma shall be required. We refer to \cite[Lemma~16.37]{DP10} for the proof.

\begin{lemma}\label{lemma:digital_shift}
For any two points $x_1,x_2\in [0,1)$, we have
  \begin{align*}
     \int_{0}^{1}\wal_k(x_1\oplus \sigma)\overline{\wal_l(x_2\oplus \sigma)}\rd \sigma= \left\{ \begin{array}{ll}
     \wal_k(x_1\ominus x_2) & \mbox{if}\ k=l , \\
     0 & \mbox{otherwise} . \\
     \end{array} \right.
  \end{align*}
\end{lemma}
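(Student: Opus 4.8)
The final statement to prove is Lemma~\ref{lemma:digital_shift}, which evaluates the integral $\int_0^1 \wal_k(x_1\oplus\sigma)\overline{\wal_l(x_2\oplus\sigma)}\rd\sigma$ over a random one-dimensional digital shift. Although the excerpt cites \cite[Lemma~16.37]{DP10} for the proof, here is how I would establish it directly from the definitions.

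\medskip

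\emph{Approach.} The plan is to exploit the multiplicativity of Walsh functions with respect to the digitwise operation $\oplus$. First I would record the basic group-character property: for any $x,y\in[0,1)$ and any $k\in\nat_0$, one has $\wal_k(x\oplus y)=\wal_k(x)\wal_k(y)$, which follows immediately from the definition, since the digits of $x\oplus y$ are the digitwise sums modulo $b$ of those of $x$ and $y$, and $\omega_b^{a+b \Mod b}=\omega_b^a\omega_b^b$. Likewise $\overline{\wal_l(z)}=\wal_l(\ominus z)$ and $\wal_l(x\oplus y)^{-1}=\wal_l(\ominus x \oplus \ominus y)$. Using these identities, the integrand factors as
\begin{align*}
\wal_k(x_1\oplus\sigma)\overline{\wal_l(x_2\oplus\sigma)} = \wal_k(x_1)\overline{\wal_l(x_2)}\cdot \wal_k(\sigma)\overline{\wal_l(\sigma)}.
\end{align*}
Thus the integral equals $\wal_k(x_1)\overline{\wal_l(x_2)}\int_0^1 \wal_k(\sigma)\overline{\wal_l(\sigma)}\rd\sigma$.

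\medskip

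\emph{Key steps.} Second, I would invoke the orthonormality of the one-dimensional Walsh system in $\Lcal_2([0,1))$ — already noted in the paper right after the definition of Walsh functions — to conclude that $\int_0^1 \wal_k(\sigma)\overline{\wal_l(\sigma)}\rd\sigma$ equals $1$ if $k=l$ and $0$ otherwise. (If one prefers a self-contained argument, this orthogonality can be proved by partitioning $[0,1)$ into $b$-adic intervals of a sufficiently fine level and summing the resulting geometric sums of roots of unity.) Consequently, when $k\neq l$ the whole integral vanishes, giving the second case of the claim. Third, when $k=l$, the prefactor becomes $\wal_k(x_1)\overline{\wal_k(x_2)} = \wal_k(x_1)\wal_k(\ominus x_2) = \wal_k(x_1\ominus x_2)$, again by the character property; this yields the first case. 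Assembling the two cases completes the proof.

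\medskip

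\emph{Main obstacle.} There is no serious obstacle: the lemma is essentially a restatement of the fact that Walsh functions are characters of the compact group $([0,1),\oplus)$ together with their orthonormality. The only point requiring a little care is the bookkeeping around the non-uniqueness of $b$-adic expansions — one must make sure that $\oplus$, $\ominus$, and the Walsh functions are all defined via the canonical expansion (infinitely many digits $\neq b-1$), and that the digitwise identities $\wal_k(x\oplus y)=\wal_k(x)\wal_k(y)$ hold on the nose; but this is handled by the conventions already fixed in the Preliminaries. Hence I would keep the proof to a couple of lines, or simply defer to the cited reference as the authors do.
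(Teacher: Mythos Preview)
Your proposal is correct: factoring the integrand via the character identity $\wal_k(x\oplus\sigma)=\wal_k(x)\wal_k(\sigma)$ and then invoking the $\Lcal_2$-orthonormality of the Walsh system is exactly the standard route, and your handling of the $k=l$ case via $\wal_k(x_1)\overline{\wal_k(x_2)}=\wal_k(x_1\ominus x_2)$ is fine (the measure-zero ambiguity in $\oplus$ does not affect the integral). The paper itself does not supply a proof but simply cites \cite[Lemma~16.37]{DP10}, so there is no alternative argument to compare against; your write-up would serve perfectly well as the omitted proof.
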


\subsection{Weighted Sobolev spaces of high order}

Here we follow the exposition in \cite{BD09} and introduce weighted reproducing kernel Hilbert spaces that are considered in this paper. The concept of weighted spaces was introduced in \cite{SW98}, where the {\em weights} play a role in moderating the importance of different variables or groups of variables in function spaces. From the viewpoint of information-based complexity, it provides an essential insight into tractability properties of multivariate problems. We refer to \cite{NW08,NW10} for general information on tractability of multivariate problems.

Let us start with the one-dimensional unweighted case. The inner product of the Hilbert space $H_{1,\alpha,(1)}$ is defined for $\alpha\ge 2$, $\alpha\in \nat$, by
  \begin{align*}
    \langle f,g\rangle_{H_{1,\alpha,(1)}}:=\sum_{r=0}^{\alpha-1}\int_0^1 f^{(r)}(x)\rd x\int_0^1 g^{(r)}(x)\rd x+\int_0^1 f^{(\alpha)}(x)g^{(\alpha)}(x)\rd x ,
  \end{align*}
where we denote by $f^{(r)}$ the $r$-th derivative of $f$ and set $f^{(0)}=f$. Let $\| f\|_{H_{1,\alpha,(1)}}:=\sqrt{\langle f,f\rangle_{H_{1,\alpha,(1)}}}$ be the norm of $f$ associated with $H_{1,\alpha,(1)}$. We note that all functions in $H_{1,\alpha,(1)}$ are defined on $[0,1)$.

We now define the function $K_{1,\alpha,(1)}:[0,1)\times [0,1)\to \RR$ by
  \begin{align*}
    K_{1,\alpha,(1)}(x,y):=\sum_{r=1}^{\alpha}\frac{B_{r}(x)B_{r}(y)}{(r !)^2}+(-1)^{\alpha+1}\frac{B_{2\alpha}(|x-y|)}{(2\alpha)!} ,
  \end{align*}
where $B_{r}$ denotes the Bernoulli polynomial of degree $r$. We refer to \cite[Chapter~23]{AS71} for information on Bernoulli polynomials. As shown in \cite[Section~2.2]{BD09}, $K_{1,\alpha,(1)}$ has the following property:
  \begin{align}\label{eq:kernel_prop}
    \int_{0}^{1}K_{1,\alpha,(1)}(x,y)\rd x=0 ,
  \end{align}
for any $0\le y<1$. The reproducing kernel for the space $H_{1,\alpha,(1)}$ is given by $1+K_{1,\alpha,(1)}$. That is, for any $f\in H_{1,\alpha,(1)}$, we have 
  \begin{align*}
    f(x) =\langle f,1+K_{1,\alpha,(1)}(\cdot,x)\rangle_{H_{1,\alpha,(1)}} ,
  \end{align*}
for $0\le x<1$. From the definition of the inner product, it is clear that the space $H_{1,\alpha,(1)}$ consists of univariate functions whose derivatives up to order $\alpha\ge 2$, $\alpha \in \nat$, are square integrable.

We consider the multi-dimensional weighted case next. Let $I_s:=\{1,\ldots,s\}$ be the index set for $s\in \nat$. The inner product of the $s$-dimensional weighted unanchored Sobolev space $H_{s,\alpha,\bsgamma}$ of order $\alpha\ge 2$, $\alpha\in \nat$, for a set of non-negative weights $\bsgamma=(\gamma_u)_{u\subseteq I_s}$ is defined by
  \begin{align*}
    \langle f,g\rangle_{H_{s,\alpha,\bsgamma}}
    := & \sum_{u\subseteq I_s}\gamma_u^{-1}\sum_{v\subseteq u}\sum_{\bsr_{u\setminus v}\in \{1,\ldots, \alpha-1\}^{|u\setminus v|}} \\
    & \times \int_{[0,1)^{|v|}}\int_{[0,1)^{s-|v|}}f^{(\bsr_{u\setminus v},\bsalpha_v,\bszero)}(\bsx)\rd\bsx_{-v}\int_{[0,1)^{s-|v|}}g^{(\bsr_{u\setminus v},\bsalpha_v,\bszero)}(\bsx)\rd\bsx_{-v}\rd\bsx_{v} ,
  \end{align*}
where we use the following notation. For $\bsr_{u\setminus v}=(r_j)_{j\in u\setminus v}$, we denote by $(\bsr_{u\setminus v},\bsalpha_v,\bszero)$ the $s$-dimensional vector in which the $j$-th component is $r_j$ for $j\in u\setminus v$, $\alpha$ for $j\in v$, and 0 for $I_s\setminus u$. For $u\subseteq I_s$ such that $\gamma_u=0$, we assume that the corresponding inner double sum equals 0 and we set $0/0=0$. Let $\| f\|_{H_{s,\alpha,\bsgamma}}=\sqrt{\langle f,f\rangle_{H_{s,\alpha,\bsgamma}}}$ be again the norm of $f$ associated with $H_{s,\alpha,\bsgamma}$. 

The reproducing kernel for $H_{s,\alpha,\bsgamma}$ becomes
  \begin{align*}
    K_{s,\alpha,\bsgamma}(\bsx,\bsy) & = \sum_{u\subseteq I_s}\gamma_u \prod_{j\in u}K_{1,\alpha,(1)}(x_j,y_j) \\
    & := \sum_{u\subseteq I_s}\gamma_u \prod_{j\in u}\left( \sum_{r=1}^{\alpha}\frac{B_{r}(x_j)B_{r}(y_j)}{(r !)^2}+(-1)^{\alpha+1}\frac{B_{2\alpha}(|x_j-y_j|)}{(2\alpha)!}\right) ,
  \end{align*}
where we set 
  \begin{align*}
    \prod_{j\in \emptyset}K_{1,\alpha,(1)}(x_j,y_j) = 1 .
  \end{align*}
That is, for any $f\in H_{s,\alpha,\bsgamma}$, we have
  \begin{align*}
    f(\bsx) =\langle f,K_{s,\alpha,\bsgamma}(\cdot,\bsx)\rangle_{H_{s,\alpha,\bsgamma}} ,
  \end{align*}
for $\bsx\in [0,1)^s$. As in the one-dimensional unweighted case, the space $H_{s,\alpha,\bsgamma}$ consists of $\alpha$-smooth functions, that is, multivariate functions whose mixed partial derivatives up to order $\alpha\ge 2$, $\alpha \in \nat$, in each variable are square integrable.

\section{Mean square worst-case error}
\label{bound}

In this section, we derive an upper bound on the mean square worst-case error in the space $H_{s,\alpha,\bsgamma}$ for digitally shifted higher order digital nets. To begin with, the worst-case error of a QMC rule using a point set $P_N=\{\bsx_0,\ldots,\bsx_{N-1}\}$ in the space $H_{s,\alpha,\bsgamma}$ is defined by
  \begin{align*}
    e(P_N,H_{s,\alpha,\bsgamma}) := \sup_{\substack{f\in H_{s,\alpha,\bsgamma} \\ \| f \|_{H_{s,\alpha,\bsgamma}} \le 1}} |I(f)-Q(f;P_N)| .
  \end{align*}
The initial error is given as
  \begin{align*}
    e(P_0,H_{s,\alpha,\bsgamma}) := \sup_{\substack{f\in H_{s,\alpha,\bsgamma} \\ \| f \|_{H_{s,\alpha,\bsgamma}} \le 1}} |I(f)| .
  \end{align*}
From \cite[Theorem~13]{BD09}, we have simple expressions for the squared worst-case error and the squared initial error in the space $H_{s,\alpha,\bsgamma}$, namely
  \begin{align*}
    e^2(P_N,H_{s,\alpha,\bsgamma}) = & \int_{[0,1)^s}\int_{[0,1)^s}K_{s,\alpha,\bsgamma}(\bsx,\bsy)\rd\bsx\rd\bsy \\
    & - \frac{2}{N}\sum_{n=0}^{N-1}\int_{[0,1)^s}K_{s,\alpha,\bsgamma}(\bsx_n,\bsx)\rd\bsx+\frac{1}{N^2}\sum_{n,n'=0}^{N-1}K_{s,\alpha,\bsgamma}(\bsx_n,\bsx_{n'}) \\
    = & -\gamma_{\emptyset}+\frac{1}{N^2}\sum_{n,n'=0}^{N-1}K_{s,\alpha,\bsgamma}(\bsx_n,\bsx_{n'}) ,
  \end{align*}
and
  \begin{align*}
    e^2(P_0,H_{s,\alpha,\bsgamma}) = \int_{[0,1)^s}\int_{[0,1)^s}K_{s,\alpha,\bsgamma}(\bsx,\bsy)\rd\bsx\rd\bsy = \gamma_{\emptyset} ,
  \end{align*}
respectively. Let us consider a randomization of $P_N$ by using a random digital shift. The mean square worst-case error for a digitally shifted point set $P_{N,\bssigma}$, denoted by $\tilde{e}^2(P_N,H_{s,\alpha,\bsgamma})$, is defined and calculated as
  \begin{align}\label{eq:mse0}
    \tilde{e}^2(P_N,H_{s,\alpha,\bsgamma}) := & \int_{[0,1)^s}e^2(P_{N, \bssigma} ,H_{s,\alpha,\bsgamma})\rd \bssigma \nonumber \\
    = & -\gamma_{\emptyset}+\frac{1}{N^2}\sum_{n,n'=0}^{N-1}\int_{[0,1)^s}K_{s,\alpha,\bsgamma}(\bsx_n\oplus \bssigma, \bsx_{n'}\oplus \bssigma)\rd \bssigma \nonumber \\
    = & \frac{1}{N^2}\sum_{n,n'=0}^{N-1}\sum_{\emptyset \ne u\subseteq I_s}\gamma_u \prod_{j\in u}\int_{0}^{1}K_{1,\alpha,(1)}(x_{n,j}\oplus \sigma_j, x_{n',j}\oplus \sigma_j)\rd \sigma_j.
  \end{align}
On the other hand, the mean square initial error is given as
  \begin{align*}
    \tilde{e}^2(P_0,H_{s,\alpha,\bsgamma}) = \gamma_{\emptyset} .
  \end{align*}

We now consider the Walsh series expansion of $K_{1,\alpha,(1)}(x, y)$ as follows.
  \begin{align*}
    K_{1,\alpha,(1)}(x, y) = \sum_{k,l=0}^{\infty}\hat{K}_{1,\alpha,(1)}(k,l)\wal_k(x)\overline{\wal_l(y)} ,
  \end{align*}
for any $x,y\in [0,1)$, where $\hat{K}_{1,\alpha,(1)}(k,l)$ denotes the $(k,l)$-th Walsh coefficient which is defined by
  \begin{align*}
    \hat{K}_{1,\alpha,(1)}(k,l) := \int_0^1 \int_0^1 K_{1,\alpha,(1)}(x, y)\overline{\wal_k(x)}\wal_l(y)\rd x\rd y .
  \end{align*}
We refer to \cite[Appendix~A.3]{DP10} for a discussion on the pointwise absolute convergence of Walsh series. By using the property (\ref{eq:kernel_prop}), we have 
  \begin{align*}
    \hat{K}_{1,\alpha,(1)}(k,0) = \hat{K}_{1,\alpha,(1)}(0,l) =0 ,
  \end{align*}
for any $k,l\in \nat_0$. Thus, the Walsh series expansion of $K_{1,\alpha,(1)}(x, y)$ reduces to
  \begin{align*}
    K_{1,\alpha,(1)}(x, y) = \sum_{k,l=1}^{\infty}\hat{K}_{1,\alpha,(1)}(k,l)\wal_k(x)\overline{\wal_l(y)} .
  \end{align*}

Substituting the above Walsh series expansion of $K_{1,\alpha,(1)}(x, y)$ into (\ref{eq:mse0}) and then using Lemma \ref{lemma:digital_shift}, we have
  \begin{align}\label{eq:mse}
    & \tilde{e}^2(P_N,H_{s,\alpha,\bsgamma}) \nonumber \\
    = & \frac{1}{N^2}\sum_{n,n'=0}^{N-1}\sum_{\emptyset \ne u\subseteq I_s}\gamma_u \prod_{j\in u}\int_{0}^{1}\sum_{k_j,l_j=1}^{\infty}\hat{K}_{1,\alpha,(1)}(k_j,l_j)\wal_{k_j}(x_{n,j}\oplus \sigma_j)\overline{\wal_{l_j}(x_{n',j}\oplus \sigma_j)}\rd \sigma_j \nonumber \\
    = & \frac{1}{N^2}\sum_{n,n'=0}^{N-1}\sum_{\emptyset \ne u\subseteq I_s}\gamma_u \prod_{j\in u}\sum_{k_j,l_j=1}^{\infty}\hat{K}_{1,\alpha,(1)}(k_j,l_j)\int_{0}^{1}\wal_{k_j}(x_{n,j}\oplus \sigma_j)\overline{\wal_{l_j}(x_{n',j}\oplus \sigma_j)}\rd \sigma_j \nonumber \\
    = & \frac{1}{N^2}\sum_{n,n'=0}^{N-1}\sum_{\emptyset \ne u\subseteq I_s}\gamma_u \prod_{j\in u}\sum_{k_j=1}^{\infty}\hat{K}_{1,\alpha,(1)}(k_j)\wal_{k_j}(x_{n,j}\ominus x_{n',j}) \nonumber \\
    = & \sum_{\emptyset \ne u\subseteq I_s}\gamma_u \sum_{\bsk_u\in \nat^{|u|}}\hat{K}_{u,\alpha,(1)}(\bsk_u)\frac{1}{N^2}\sum_{n,n'=0}^{N-1}\wal_{(\bsk_u,\bszero)}(\bsx_{n}\ominus \bsx_{n'}) ,
  \end{align}
where we write $\hat{K}_{1,\alpha,(1)}(k)=\hat{K}_{1,\alpha,(1)}(k,k)$ and $\hat{K}_{u,\alpha,(1)}(\bsk_u)=\prod_{j\in u}\hat{K}_{1,\alpha,(1)}(k_j)$ for short, and we denote by $(\bsk_u,\bszero)$ the $s$-dimensional vector in which the $j$-th component is $k_j$ if $j\in u$, and $0$ otherwise.

\subsection{A bound on Walsh coefficients for smooth functions}
From \cite[(13) and Proposition~20]{BD09} in which the case $k=l$ is considered here, we have the following bound on the Walsh coefficient $\hat{K}_{1,\alpha,(1)}(k)$
  \begin{align*}
    \left| \hat{K}_{1,\alpha,(1)}(k)\right| \le D_{\alpha,b}b^{-2\mu_{\alpha}(k)} ,
  \end{align*}
for $k\in \nat$, where $\mu_{\alpha}(k)$ is the weight introduced in \cite{Dic08}, which is defined as
  \begin{align}\label{eq:dick_weight}
    \mu_{\alpha}(k) := a_1+\cdots +a_{\min(\nu, \alpha)} ,
  \end{align}
where the $b$-adic expansion of $k$ is given as $k=\kappa_1 b^{a_1-1}+\cdots +\kappa_{\nu}b^{a_\nu-1}$ such that $0<\kappa_1,\ldots,\kappa_{\nu}<b$ and $a_1>\cdots > a_{\nu}>0$. We define $\mu_{\alpha}(0):=0$. Moreover, $D_{\alpha,b}$ is positive and depends only on $\alpha$ and $b$, given explicitly as
  \begin{align*}
    D_{\alpha,b} = \max_{1\le \nu \le \alpha}\left( C'_{\alpha,b,\nu}+\tilde{C}_{2\alpha,b}b^{-2(\alpha-\nu)}\right) ,
  \end{align*}
in which $C'_{\alpha,b,\nu}$ and $\tilde{C}_{2\alpha,b}$ are respectively given by
  \begin{align*}
    C'_{\alpha,b,\nu} = \sum_{\tau=\nu}^{\alpha}C_{\tau,b}^2 b^{-2(\tau-\nu)} ,
  \end{align*}
where $C_{1,b}=(2\sin(\pi/b))^{-1}$ and $C_{\tau,b}=(1+1/b+1/(b(b+1)))^{\tau-2}/(2\sin(\pi/b))^{\tau}$ for $\tau\ge 2$, and 
  \begin{align*}
    \tilde{C}_{2\alpha,b} = \frac{2(1+1/b+1/(b(b+1)))^{2\alpha-2}}{(2\sin(\pi/b))^{2\alpha}} .
  \end{align*}
Furthermore, we write $\mu_{\alpha}(\bsk_u)=\sum_{j\in u}\mu_{\alpha}(k_j)$. Then we have
  \begin{align*}
    \left| \hat{K}_{u,\alpha,(1)}(\bsk_u)\right| = \prod_{j\in u}\left| \hat{K}_{1,\alpha,(1)}(k_j)\right| \le D_{\alpha,b}^{|u|}b^{-2\mu_{\alpha}(\bsk_u)} ,
  \end{align*}
where $|u|$ denotes the cardinality of $u$.

Applying the triangle inequality to (\ref{eq:mse}) and substituting the above bound on the Walsh coefficient into it, we have
  \begin{align}\label{eq:mse2}
    \tilde{e}^2(P_N,H_{s,\alpha,\bsgamma}) \le & \sum_{\emptyset \ne u\subseteq I_s}\gamma_u \sum_{\bsk_u\in \nat^{|u|}}\left|\hat{K}_{1,\alpha,(1)}(\bsk_u)\frac{1}{N^2}\sum_{n,n'=0}^{N-1}\wal_{(\bsk_u,\bszero)}(\bsx_{n}\ominus \bsx_{n'})\right| \nonumber \\
    \le & \sum_{\emptyset \ne u\subseteq I_s}\gamma_u D_{\alpha,b}^{|u|}\sum_{\bsk_u\in \nat^{|u|}}b^{-2\mu_{\alpha}(\bsk_u)}\left| \frac{1}{N^2}\sum_{n,n'=0}^{N-1}\wal_{(\bsk_u, \bszero)}(\bsx_n\ominus \bsx_{n'})\right| .
  \end{align}
We note that this bound on $\tilde{e}^2(P_N,H_{s,\alpha,\bsgamma})$ holds for any point set $P_N$.

\subsection{A bound on the mean square worst-case error}
Here we give an upper bound on $\tilde{e}^2(P_N,H_{s,\alpha,\bsgamma})$ where we restrict the class of point sets to higher order digital nets constructed according to Definition \ref{def:interlacing_polynomial_lattice}, that is, we suppose that a point set $P_N$ in (\ref{eq:mse2}) is given by digitally interlacing a polynomial lattice point set in dimension $ds$.

\begin{theorem}\label{thm:upper_bound}
Let $b$ be a prime. Let $m,s,\alpha,d\in \nat$ such that $\alpha\ge 2$, and let $\bsgamma=(\gamma_u)_{u\subseteq I_s}$ be a set of weights. Let $P_{b^m,ds}(\bsq,p)$ be a polynomial lattice point set, and let $D^{\perp}(\bsq,p)$ be its dual net. For a higher order digital net $P_{b^m}$ constructed according to Definition \ref{def:interlacing_polynomial_lattice}, the mean square worst-case error in $H_{s,\alpha,\bsgamma}$ with respect to a random digital shift is bounded as
  \begin{align*}
    \tilde{e}^2(P_{b^m},H_{s,\alpha,\bsgamma}) \le \sum_{\emptyset \ne w\subseteq I_{ds}}\gamma_{\phi(w)} \tilde{D}_{\alpha,b,d}^{|\phi(w)|}\sum_{\substack{\bsl_w\in \nat^{|w|} \\ (\bsl_w,\bszero)\in D^{\perp}(\bsq,p)}}r_{\alpha,d}(\bsl_w) ,
  \end{align*}
where $\tilde{D}_{\alpha,b,d}=b^{(2d-1)\alpha}D_{\alpha,b}$ is positive and depends only on $\alpha,b$ and $d$, and for $\emptyset \ne w\subseteq I_{ds}$, $(\bsl_w,\bszero)$ denotes the $ds$-dimensional vector whose $j$-th component is $l_j$ if $j\in w$, and $0$ otherwise, and define the mapping $\phi: I_{ds}\to I_s$ as 
  \begin{align*}
    \phi(w):=\{1\le j\le s: w\cap\{d(j-1)+1,\ldots, dj\}\ne \emptyset \},
  \end{align*}
and further $r_{\alpha,d}(\bsl_w):=\prod_{j\in w}r_{\alpha,d}(l_j)$, where
  \begin{align*}
    r_{\alpha,d}(l_j) := \left\{ \begin{array}{ll}
     1 & \mbox{if}\ l_j=0 , \\
     b^{-2\min(\alpha,d)\mu_1(l_j)-\alpha} & \mbox{otherwise} . \\
     \end{array} \right.
  \end{align*}
\end{theorem}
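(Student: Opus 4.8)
The starting point is inequality (\ref{eq:mse2}), which bounds $\tilde e^2(P_N,H_{s,\alpha,\bsgamma})$ for an \emph{arbitrary} point set $P_N$; the plan is to specialize it to an interlaced higher order digital net $P_{b^m}$ as in Definition \ref{def:interlacing_polynomial_lattice} and to re-express the right-hand side through the dual net $D^{\perp}(\bsq,p)$ of the underlying polynomial lattice point set. To that end I would first set up the ``index version'' of the digit interlacing: for $k\in\nat$ with base-$b$ digit expansion, collecting the digits whose positions are congruent modulo $d$ produces integers $l^{(1)},\ldots,l^{(d)}\in\nat_0$, and directly from the definitions of $\Dcal_d$ and of the Walsh functions one checks the identity $\wal_k(\Dcal_d(y_1,\ldots,y_d))=\prod_{r=1}^{d}\wal_{l^{(r)}}(y_r)$. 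Applied coordinatewise, this turns, for $\emptyset\ne u\subseteq I_s$ and $\bsk_u\in\nat^{|u|}$, the $ds$-dimensional index $(\bsk_u,\bszero)$ into a vector $\bsl\in\nat_0^{ds}$; writing $w$ for the support of $\bsl$ and $\bsl_w$ for its nonzero entries, one has $u=\phi(w)$, so $|u|=|\phi(w)|$, and $(u,\bsk_u)\mapsto(w,\bsl_w)$ is a bijection between the index sets appearing in (\ref{eq:mse2}) and in the claimed bound. Moreover $\mu_\alpha(\bsk_u)=\sum_{j\in\phi(w)}\mu_\alpha(k_j)$, where $k_j$ denotes the interlacing of those entries of $\bsl_w$ lying in the block $\{d(j-1)+1,\ldots,dj\}$.

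Next I would evaluate the inner sum over $n,n'$ in (\ref{eq:mse2}). Using the multiplicativity $\wal_{\bsk}(\bsv\ominus\bsv')=\wal_{\bsk}(\bsv)\overline{\wal_{\bsk}(\bsv')}$ one has $\tfrac1{b^{2m}}\sum_{n,n'}\wal_{(\bsk_u,\bszero)}(\bsx_n\ominus\bsx_{n'})=\big|\tfrac1{b^m}\sum_n\wal_{(\bsk_u,\bszero)}(\bsx_n)\big|^2$; since $\bsx_n=\Dcal_d(\bsy_n)$ with $\bsy_n$ the $n$-th point of $P_{b^m,ds}(\bsq,p)$, the interlacing identity gives $\wal_{(\bsk_u,\bszero)}(\bsx_n)=\wal_{(\bsl_w,\bszero)}(\bsy_n)$, and Lemma \ref{lemma:dual_walsh} shows this average, hence its squared modulus, equals $1$ if $(\bsl_w,\bszero)\in D^{\perp}(\bsq,p)$ and $0$ otherwise. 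Substituting into (\ref{eq:mse2}) and re-indexing by $(w,\bsl_w)$ yields
\begin{align*}
\tilde e^2(P_{b^m},H_{s,\alpha,\bsgamma})\ \le\ \sum_{\emptyset\ne w\subseteq I_{ds}}\gamma_{\phi(w)}\,D_{\alpha,b}^{|\phi(w)|}\sum_{\substack{\bsl_w\in\nat^{|w|}\\(\bsl_w,\bszero)\in D^{\perp}(\bsq,p)}}b^{-2\mu_\alpha(\bsk_u)},
\end{align*}
where $\bsk_u=\bsk_u(w,\bsl_w)$. Comparing with the asserted bound and recalling $\tilde{D}_{\alpha,b,d}=b^{(2d-1)\alpha}D_{\alpha,b}$ and $r_{\alpha,d}(\bsl_w)=\prod_{j\in w}r_{\alpha,d}(l_j)$, it now suffices to prove the per-block estimate
\begin{align*}
D_{\alpha,b}\,b^{-2\mu_\alpha(k_j)}\ \le\ \tilde{D}_{\alpha,b,d}\prod_{i\in w\cap\{d(j-1)+1,\ldots,dj\}}r_{\alpha,d}(l_i)\qquad\text{for each }j\in\phi(w),
\end{align*}
after which taking the product over $j\in\phi(w)$ finishes the proof.

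This per-block estimate is the heart of the matter and, I expect, the main obstacle. Setting $e_j:=|w\cap\{d(j-1)+1,\ldots,dj\}|$ and cancelling $D_{\alpha,b}$, it is equivalent to the combinatorial inequality
\begin{align*}
2\mu_\alpha(k_j)\ \ge\ 2\min(\alpha,d)\sum_{i}\mu_1(l_i)+\alpha e_j-(2d-1)\alpha,
\end{align*}
the sum running over the $e_j$ indices of $w$ in block $j$. To prove it I would track digit positions under interlacing: a nonzero digit at base-$b$ position $p$ of a component $l_i$ with $i=d(j-1)+r$ ($1\le r\le d$) occupies position $r+(p-1)d$ of $k_j$, so the leading digit of $l_i$ sits at position $r+(\mu_1(l_i)-1)d$; these $e_j$ positions are pairwise distinct, and $\mu_\alpha(k_j)$ equals the sum of the $\min(\nu,\alpha)$ largest among \emph{all} nonzero digit positions of $k_j$, where $\nu$ denotes their number, so $\nu\ge e_j$. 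Bounding that sum from below by a suitable partial sum of the positions $r+(\mu_1(l_i)-1)d$, distinguishing the cases $\alpha\ge e_j$ and $\alpha<e_j$ and using $e_j\le d$ and $\min(\alpha,d)\le d$, produces the required lower bound up to a term of order $\alpha$, which is exactly absorbed by the factor $b^{(2d-1)\alpha}$ in $\tilde{D}_{\alpha,b,d}$. This counting argument is the same mechanism that underlies Dick's interlacing construction, so one may alternatively invoke the corresponding estimate from \cite{Dic08}; once it is in place, all remaining steps are routine bookkeeping.
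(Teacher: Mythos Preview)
Your proposal is correct and follows essentially the same route as the paper's proof. The paper packages the Walsh--interlacing identity as Lemma \ref{lem:interlace_integer}, the dual-net evaluation of the character sum as Lemma \ref{lem:dual_net_interlace}, and your per-block combinatorial inequality (including the same case split $e_j\le\alpha$ versus $e_j>\alpha$, the latter handled by an averaging argument) as Lemma \ref{lemma:weight}; the remaining bookkeeping is identical.
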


To emphasize the role of the polynomial lattice point set, we denote the upper bound shown in the above theorem by 
  \begin{align*}
    B_{\alpha,d,\bsgamma}(\bsq,p) = \sum_{\emptyset \ne w\subseteq I_{ds}}\gamma_{\phi(w)} \tilde{D}_{\alpha,b,d}^{|\phi(w)|}\sum_{\substack{\bsl_w\in \nat^{|w|} \\ (\bsl_w,\bszero)\in D^{\perp}(\bsq,p)}}r_{\alpha,d}(\bsl_w) ,
  \end{align*}
where $p\in \FF_b[x]$ such that $\deg(p)=m$ and $\bsq\in (\FF_b[x])^{ds}$. The following corollary gives a computable formula for $B_{\alpha,d,\bsgamma}(\bsq,p)$. We write $\log_b$ for the logarithm in base $b$ and set $b^{(2\min(\alpha,d)-1)\lfloor \log_b 0\rfloor}=0$.

\begin{corollary}\label{cor:upper_bound}
Let $b$ be a prime. Let $m,s,\alpha,d\in \nat$ such that $\alpha\ge 2$, and let $\bsgamma=(\gamma_u)_{u\subseteq I_s}$ be a set of weights. Let $B_{\alpha,d,\bsgamma}(\bsq,p)$ be given as above, and let $P_{b^m,ds}(\bsq,p)=\{\bsy_0,\ldots,\bsy_{b^m-1}\}\subset [0,1)^{ds}$ be a polynomial lattice point set. 
\begin{enumerate}
\item We have
  \begin{align*}
    B_{\alpha,d,\bsgamma}(\bsq,p) = \frac{1}{b^m}\sum_{n=0}^{b^m-1}\sum_{\emptyset \ne u\subseteq I_{s}}\gamma_u \tilde{D}_{\alpha,b,d}^{|u|} \prod_{j\in u}\left[ -1+\prod_{l=1}^{d}\left( 1+\chi_{\alpha,d}(y_{n,d(j-1)+l})\right) \right] ,
  \end{align*}
where we define for $y\in [0,1)$
  \begin{align*}
    \chi_{\alpha,d}(y)=\frac{b-1-b^{(2\min(\alpha,d)-1)\lfloor \log_b y\rfloor}(b^{2\min(\alpha,d)}-1)}{b^{\alpha}(b^{2\min(\alpha,d)}-b)} .
  \end{align*}
\item Particularly in case of product weights, that is $\gamma_u=\prod_{j\in u}\gamma_j$ for all $u\subseteq I_s$, we have
  \begin{align*}
    B_{\alpha,d,\bsgamma}(\bsq,p) = -1+\frac{1}{b^m}\sum_{n=0}^{b^m-1}\prod_{j=1}^{s}\left[1-\gamma_j \tilde{D}_{\alpha,b,d} +\gamma_j \tilde{D}_{\alpha,b,d}\prod_{l=1}^{d}\left( 1+\chi_{\alpha,d}(y_{n,d(j-1)+l})\right) \right] .
  \end{align*}
\end{enumerate}
\end{corollary}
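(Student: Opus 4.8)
The plan is to start from the bound $B_{\alpha,d,\bsgamma}(\bsq,p)$ given in Theorem \ref{thm:upper_bound} and transform the sum over the dual net into an average over the point set $\{\bsy_0,\ldots,\bsy_{b^m-1}\}$ by invoking Lemma \ref{lemma:dual_walsh}. Concretely, for a fixed $\emptyset\ne w\subseteq I_{ds}$ the inner sum $\sum_{(\bsl_w,\bszero)\in D^{\perp}(\bsq,p)}r_{\alpha,d}(\bsl_w)$ should first be extended to a sum over all $\bsl\in\nat_0^{ds}$ supported on $w$ (so that $l_j\ge 1$ for $j\in w$ and $l_j=0$ for $j\notin w$, consistent with $r_{\alpha,d}(0)=1$); then, summing over all $\emptyset\ne w\subseteq I_{ds}$ reassembles a single sum over all $\bsl\in\nat_0^{ds}\setminus\{\bszero\}$, but the weight $\gamma_{\phi(w)}\tilde D_{\alpha,b,d}^{|\phi(w)|}$ depends on $w$ through $\phi(w)$, i.e. on which blocks of $d$ consecutive coordinates are ``hit'' by the support of $\bsl$. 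I would therefore reorganize the double sum $\sum_{\emptyset\ne w\subseteq I_{ds}}\sum_{\bsl_w}$ as a sum over $\emptyset\ne u\subseteq I_s$ (the set of hit blocks, $u=\phi(w)$) times a product over $j\in u$ of ``at least one of the $d$ coordinates in block $j$ is nonzero'', which is where the $-1+\prod_{l=1}^d(1+\cdots)$ structure comes from: $\prod_{l=1}^d(1+t_l)-1$ is exactly the generating function for ``not all zero''.

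Next I would replace the indicator of $(\bsl,\bszero)\in D^{\perp}(\bsq,p)$ by $\frac{1}{b^m}\sum_{n=0}^{b^m-1}\wal_{(\bsl,\bszero)}(\bsy_n)$ using Lemma \ref{lemma:dual_walsh}, and swap the order of summation to bring the average over $n$ outside. This turns the expression into
\begin{align*}
B_{\alpha,d,\bsgamma}(\bsq,p)=\frac{1}{b^m}\sum_{n=0}^{b^m-1}\sum_{\emptyset\ne u\subseteq I_s}\gamma_u\tilde D_{\alpha,b,d}^{|u|}\prod_{j\in u}\Bigl[-1+\prod_{l=1}^d\Bigl(1+\sum_{l_j=1}^{\infty}r_{\alpha,d}(l_j)\wal_{l_j}(y_{n,d(j-1)+l})\Bigr)\Bigr],
\end{align*}
so it remains to identify the one-dimensional sum $\sum_{k=1}^{\infty}r_{\alpha,d}(k)\wal_k(y)$ with $\chi_{\alpha,d}(y)$. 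For this I would recall that $\mu_1(k)$ equals one plus the position of the leading nonzero digit of $k$, so that the $k$ with $\mu_1(k)=v$ are exactly those in $\{b^{v-1},\ldots,b^v-1\}$; grouping the Walsh sum by the value $v=\mu_1(k)$ and using the standard fact that $\sum_{k=b^{v-1}}^{b^v-1}\wal_k(y)$ equals $b^{v-1}(b-1)$ when $\lfloor\log_b y\rfloor\ge v$ (equivalently $y<b^{-v}$, so the first $v$ digits of $y$ vanish... more precisely when the relevant digits align) and equals $-b^{v-1}$ otherwise, I would sum the resulting geometric-type series in $v$ with ratio governed by $b^{-2\min(\alpha,d)}$ and the prefactor $b^{-\alpha}$ to obtain the closed form $\chi_{\alpha,d}(y)$; the convention $b^{(2\min(\alpha,d)-1)\lfloor\log_b 0\rfloor}=0$ handles $y=0$. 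This yields part 1. For part 2, under product weights $\gamma_u=\prod_{j\in u}\gamma_j$ the sum over $\emptyset\ne u\subseteq I_s$ of a product over $j\in u$ factorizes in the usual way: $\sum_{u\subseteq I_s}\prod_{j\in u}a_j=\prod_{j=1}^s(1+a_j)$ with $a_j=\gamma_j\tilde D_{\alpha,b,d}[-1+\prod_{l=1}^d(1+\chi_{\alpha,d}(y_{n,d(j-1)+l}))]$, and subtracting the empty-set term gives the stated formula.

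The main obstacle I anticipate is the careful bookkeeping in identifying the one-dimensional Walsh sum with $\chi_{\alpha,d}(y)$: one must correctly evaluate $\sum_{k=b^{v-1}}^{b^v-1}\wal_k(y)$ as a function of the $b$-adic digits of $y$ (this is a character-sum computation over $\FF_b$ whose sign flips depending on whether a fixed set of leading digits of $y$ vanishes), and then sum the resulting series in $v$ while tracking the cutoff at $v=\lfloor\log_b y\rfloor+1$ — the two regimes (terms with all leading digits zero versus the first nonzero one) combine into the single rational expression only after a slightly delicate geometric-series manipulation. Everything else — interchanging finite and absolutely convergent sums, the $\phi$-fibering of $w$ over $u$, the product-weight factorization — is routine once the structure is laid out, and the absolute convergence needed to justify the interchanges follows from the exponential decay $r_{\alpha,d}(k)\le b^{-2\min(\alpha,d)\mu_1(k)-\alpha}$ together with $|\wal_k|=1$.
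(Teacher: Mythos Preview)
Your proposal is correct and follows essentially the same route as the paper: invoke Lemma~\ref{lemma:dual_walsh} to turn the dual-net sum into an average of Walsh functions over the points, fiber the sum over $w\subseteq I_{ds}$ by $u=\phi(w)$ to produce the $-1+\prod_{l=1}^d(1+\cdots)$ structure, identify the one-dimensional Walsh series with $\chi_{\alpha,d}$, and finally factorize under product weights. The only difference is that the paper simply cites \cite[Section~2.2]{DP05} for the identity $\sum_{l\ge1}r_{\alpha,d}(l)\wal_l(y)=\chi_{\alpha,d}(y)$, whereas you sketch its derivation; note that your description of $\sum_{k=b^{v-1}}^{b^v-1}\wal_k(y)$ is slightly imprecise (there are three cases: $b^{v-1}(b-1)$ when the first $v$ digits of $y$ vanish, $-b^{v-1}$ when exactly the first $v-1$ do, and $0$ otherwise), but you correctly flag this step as the delicate one and the remainder of the computation goes through.
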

The proofs of Theorem \ref{thm:upper_bound} and Corollary \ref{cor:upper_bound} are given in Appendix \ref{app:a}.

\section{Component-by-component construction of polynomial lattice point sets}
\label{cbc}

\subsection{Construction algorithm}
As an efficient computer search algorithm to find good polynomials $p$ and $\bsq=(q_1,\ldots,q_{ds})$ such that $B_{\alpha,d,\bsgamma}(\bsq,p)$ becomes small, we investigate the CBC construction. We denote by $R_{b,m}$ the set of all non-zero polynomials over $\FF_b$ with degree less than $m$, that is,
	\begin{align*}
		R_{b,m}=\{ q\in \FF_b[x]: \deg(q)<m\ \mbox{and}\ q\ne0\} .
	\end{align*}
We search $\bsq$ from $R_{b,m}^{ds}$ component by component. If $p$ is irreducible, every one-dimensional projection of the point set $P_{b^m,ds}(\bsq,p)$ consists of the equidistributed points $0,1/b^m,\ldots,(b^m-1)/b^m$ for any $\bsq\in R_{b,m}^{ds}$. Thus, without loss of generality we can restrict ourselves to considering $q_1=1$. Thus, the CBC construction proceeds as follows.

\begin{algorithm}\label{algorithm:cbc}
Let $b$ be a prime. For $m,s,\alpha,d\in \nat$ with $\alpha\ge 2$ and a set of weights $\bsgamma=(\gamma_u)_{u\subseteq I_s}$, do the following:
	\begin{enumerate}
		\item Choose an irreducible polynomial $p\in \FF_b[x]$ such that $\deg(p)=m$.
		\item Set $q_1=1$.
		\item For $r=2,\ldots, ds$, find $q^*_r=q_r$ by minimizing $B_{\alpha,d, \bsgamma}((\bsq_{r-1}, q^*_r),p)$ as a function of $q^*_r\in R_{b,m}$ where $\bsq_{r-1}=(q_1,\ldots,q_{r-1})$ and 
  \begin{align*}
    B_{\alpha,d, \bsgamma}((\bsq_{r-1}, q^*_r),p) = \sum_{\emptyset \ne w\subseteq I_r}\gamma_{\phi(w)} \tilde{D}_{\alpha,b,d}^{|\phi(w)|}\sum_{\substack{\bsl_w\in \nat^{|w|} \\ (\bsl_w,\bszero)\in D^{\perp}((\bsq_{r-1}, q^*_r),p)}}r_{\alpha,d}(\bsl_w) .
  \end{align*}
	\end{enumerate}
\end{algorithm}

\begin{remark}\label{remark:cbc_compute}
In the third step of Algorithm \ref{algorithm:cbc}, we need to compute $B_{\alpha,d, \bsgamma}((\bsq_{r-1}, q^*_r),p)$ for which we have a computable formula as shown below. Since the proof is almost the same as that of Corollary \ref{cor:upper_bound}, we omit it.

For $1\le r\le ds$, let $P_{b^m,r}((\bsq_{r-1}, q^*_r),p)=\{\bsy_0,\ldots,\bsy_{b^m-1}\}\subset [0,1)^r$ be a polynomial lattice point set, where $\bsq_{0}$ is the empty set. We write $r=d(j_1-1)+d_1$ such that $j_1,d_1\in \nat$ and $d_1\in \{1,\ldots, d\}$. Then we have
  \begin{align*}
    & B_{\alpha,d,\bsgamma}((\bsq_{r-1}, q^*_r),p) \\
    = & \frac{1}{b^m}\sum_{n=0}^{b^m-1}\sum_{\emptyset \ne u\subseteq I_{j_1-1}}\gamma_u \tilde{D}_{\alpha,b,d}^{|u|} \prod_{j\in u}\left[ -1+\prod_{l=1}^{d}\left( 1+\chi_{\alpha,d}(y_{n,d(j-1)+l})\right) \right] \\
    & + \frac{1}{b^m}\sum_{n=0}^{b^m-1}\sum_{u\subseteq I_{j_1-1}}\gamma_{u\cup \{j_1\}} \tilde{D}_{\alpha,b,d}^{|u|+1} \prod_{j\in u}\left[ -1+\prod_{l=1}^{d}\left( 1+\chi_{\alpha,d}(y_{n,d(j-1)+l})\right) \right] \\
    & \times \left[ -1+\prod_{l=1}^{d_1}\left( 1+\chi_{\alpha,d}(y_{n,d(j_1-1)+l})\right) \right] .
  \end{align*}
Particularly in case of product weights, we have
  \begin{align*}
    & B_{\alpha,d,\bsgamma}((\bsq_{r-1}, q^*_r),p) \\
    = & -1+\frac{1}{b^m}\sum_{n=0}^{b^m-1}\prod_{j=1}^{j_1-1}\left[1-\gamma_j \tilde{D}_{\alpha,b,d} +\gamma_j \tilde{D}_{\alpha,b,d}\prod_{l=1}^{d}\left( 1+\chi_{\alpha,d}(y_{n,d(j-1)+l})\right) \right] \\
    & \times \left[1-\gamma_{j_1} \tilde{D}_{\alpha,b,d} +\gamma_{j_1} \tilde{D}_{\alpha,b,d}\prod_{l=1}^{d_1}\left( 1+\chi_{\alpha,d}(y_{n,d(j_1-1)+l})\right) \right] .
  \end{align*}
\end{remark}

The following theorem gives a bound on $B_{\alpha,d, \bsgamma}(\bsq_r,p)$ for $1\le r\le ds$, which justifies the CBC construction.

\begin{theorem}\label{theorem:cbc_bound}
Let $b$ be a prime. For $m,s,\alpha,d\in \nat$ with $\alpha\ge 2$ and a set of weights $\bsgamma=(\gamma_u)_{u\subseteq I_s}$, let $p\in \FF_b[x]$ and $\bsq=(q_1,\ldots,q_{ds})\in (\FF_b[x])^{ds}$ be found by Algorithm \ref{algorithm:cbc}. Then for any $1\le r\le ds$ we have
  \begin{align}\label{eq:cbc_bound}
    B_{\alpha,d,\bsgamma}(\bsq_r,p) \le \frac{1}{(b^m-1)^{1/\lambda}}\left[ \sum_{\emptyset \ne u \subseteq I_{j_1-1}}\gamma_u^{\lambda}G_{\alpha,d,\lambda,d}^{|u|}+G_{\alpha,d,\lambda,d_1}\sum_{u \subseteq I_{j_1-1}}\gamma_{u\cup \{j_1\}}^{\lambda}G_{\alpha,d,\lambda,d}^{|u|} \right]^{1/\lambda} ,
  \end{align}
for $1/(2\min(\alpha,d)) < \lambda \le 1$, where we write $r=d(j_1-1)+d_1$ such that $j_1,d_1\in \nat$ and $d_1\in \{1,\ldots, d\}$, and we denote for $1\le a\le d$
  \begin{align*}
    G_{\alpha,d,\lambda,a} = \tilde{D}_{\alpha,b,d}^{\lambda}\left( -1+(1+C_{\alpha,d,\lambda})^a\right) ,
  \end{align*}
where $\tilde{D}_{\alpha,b,d}$ is given as in Theorem \ref{thm:upper_bound} and 
  \begin{align*}
    C_{\alpha,d,\lambda} = \frac{1}{b^{\alpha \lambda}}\max\left\{ \left( \frac{b-1}{b^{2\min(\alpha,d)}-b}\right)^{\lambda}, \frac{b-1}{b^{2\lambda\min(\alpha,d)}-b}\right\} .
  \end{align*}
\end{theorem}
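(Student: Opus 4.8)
The plan is to prove the bound by induction on $r$, following the now-standard CBC argument (as in \cite{BDGP11,GDxx,NC06a}), using Jensen's inequality to linearise the $\lambda$-powers and the averaging argument to control the newly added component $q_r$.

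\textbf{Step 1: Setup and base case.} First I would fix $1/(2\min(\alpha,d))<\lambda\le 1$ and rewrite $B_{\alpha,d,\bsgamma}(\bsq_r,p)$ using the expression from Algorithm \ref{algorithm:cbc}, separating the sum over $\emptyset\ne w\subseteq I_r$ according to whether $w\subseteq I_{r-1}$ or $w$ contains the index $r$. Applying Jensen's inequality in the form $\bigl(\sum a_i\bigr)^\lambda\le\sum a_i^\lambda$ for $0<\lambda\le 1$ to the outer sum over $w$ (after raising the whole bound to the power $\lambda$), the quantity $B_{\alpha,d,\bsgamma}(\bsq_r,p)^\lambda$ is bounded by a sum of terms each of the shape $\gamma_{\phi(w)}^\lambda\tilde D_{\alpha,b,d}^{\lambda|\phi(w)|}\sum_{(\bsl_w,\bszero)\in D^\perp}r_{\alpha,d}(\bsl_w)^\lambda$. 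For the base case $r=1$ we have $q_1=1$, $j_1=1$, $d_1=1$, so $I_{j_1-1}=\emptyset$ and only the term with $w=\{1\}$ survives; one checks directly that $\sum_{l_1=1}^{\infty}r_{\alpha,d}(l_1)^\lambda$ restricted to the dual net is at most $C_{\alpha,d,\lambda}/(b^m-1)^{1/\lambda}$-type quantity after using that the one-dimensional projection is equidistributed, matching the claimed bound with $G_{\alpha,d,\lambda,1}$.

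\textbf{Step 2: Inductive step via the averaging argument.} Assuming the bound holds for $r-1$, I would write $B_{\alpha,d,\bsgamma}(\bsq_r,p)^\lambda\le B_{\alpha,d,\bsgamma}(\bsq_{r-1},p)^\lambda+\theta_r(q_r)$, where $\theta_r(q_r)$ collects the new terms with $r\in w$. Since Algorithm \ref{algorithm:cbc} chooses $q_r$ to minimise $B_{\alpha,d,\bsgamma}(\bsq_r,p)$, it also minimises $\theta_r(q_r)$ for that fixed $\bsq_{r-1}$, hence $\theta_r(q_r)\le\frac{1}{b^m-1}\sum_{q\in R_{b,m}}\theta_r(q)$. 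The key computation is then averaging $\sum_{(\bsl_w,\bszero)\in D^\perp((\bsq_{r-1},q),p)}r_{\alpha,d}(\bsl_w)^\lambda$ over $q\in R_{b,m}$: splitting off the contribution where $l_r\equiv 0$ (which does not depend on $q$ and must be handled carefully — but since we took $\emptyset\ne w\ni r$ we have $l_r\ge 1$), and using that for each fixed nonzero $(\bsl_{w\setminus\{r\}})$ the number of $q\in R_{b,m}$ with $(\bsl_w,\bszero)\in D^\perp$ is at most $(b^m-1)/(b^m-1)$-controlled, one obtains a factor $1/(b^m-1)$ times $\sum_{l_r=1}^\infty r_{\alpha,d}(l_r)^\lambda$ times the full (unrestricted) sums over the remaining components. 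Bounding $\sum_{l=1}^\infty r_{\alpha,d}(l)^\lambda=\sum_{l=1}^\infty b^{-2\lambda\min(\alpha,d)\mu_1(l)-\alpha\lambda}$ by a geometric-type series and comparing with $\sum_{l=1}^\infty r_{\alpha,d}(l)$ gives precisely $C_{\alpha,d,\lambda}$ (the $\max$ in its definition arising from whether $\lambda$ is applied before or after the exponent manipulation, i.e.\ $(b^{-\alpha})^\lambda\max\{(\sum b^{-2\min(\alpha,d)\mu_1(l)})^\lambda, \sum b^{-2\lambda\min(\alpha,d)\mu_1(l)}\}$ type estimate via Jensen once more on the geometric series).

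\textbf{Step 3: Reassembling the product structure.} After the averaging, the new terms assemble into $\frac{1}{b^m-1}$ times sums of the form $\sum_{v\subseteq I_{j_1-1}}\gamma_{\phi(w)}^\lambda\tilde D_{\alpha,b,d}^{\lambda(|v|+1)}C_{\alpha,d,\lambda}^{|w|}$; carrying out the sum over which subset of $\{d(j-1)+1,\dots,dj\}$ lies in $w$ for each $j\in v$ produces the factors $(-1+(1+C_{\alpha,d,\lambda})^a)$, i.e.\ exactly $G_{\alpha,d,\lambda,a}$ with $a=d$ for full blocks and $a=d_1$ for the partial block containing index $r$. Combining with the inductive bound for $B_{\alpha,d,\bsgamma}(\bsq_{r-1},p)^\lambda$ and noting the telescoping of the $G_{\alpha,d,\lambda,\cdot}$ contributions as $r$ moves through a block (so that the partial-block term at step $r$ dominates the accumulated partial-block terms), one arrives at the stated closed form; a final application of $(a+b)^{1/\lambda}\le a^{1/\lambda}+b^{1/\lambda}$ is \emph{not} needed since we keep everything under a single $1/\lambda$ power.

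\textbf{Main obstacle.} The delicate point I expect to be hardest is the counting estimate in Step 2: showing that averaging over $q\in R_{b,m}$ the indicator of $(\bsl_w,\bszero)\in D^\perp((\bsq_{r-1},q),p)$ genuinely gains the full factor $1/(b^m-1)$ uniformly in the remaining (nonzero, truncated) components $\bsl_{w\setminus\{r\}}$. This requires that $\rtr_m(l_r)$ is invertible modulo the irreducible $p$ — which holds precisely because $1\le \deg\rtr_m(l_r)<m=\deg p$ and $p$ is irreducible, so $\rtr_m(l_r)\not\equiv 0\pmod p$ — so that the congruence $\rtr_m(\bsl_w)\cdot(\bsq_{r-1},q)\equiv0\pmod p$ determines $q$ uniquely modulo $p$, giving at most one admissible $q$ in $R_{b,m}$; one then bounds the count of nonzero $\bsl_{w\setminus\{r\}}$ trivially. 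Care is also needed because $l_r$ ranges over all of $\nat$ whereas only $\rtr_m(l_r)$ enters, so the sum $\sum_{l_r\ge1}r_{\alpha,d}(l_r)^\lambda$ must be shown to converge and be independent of $m$, which it is because $r_{\alpha,d}$ decays in $\mu_1(l_r)$. Everything else is routine bookkeeping of the weights and the block structure induced by $\phi$.
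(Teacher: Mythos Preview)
Your overall architecture matches the paper's proof exactly: induction on $r$, the exact splitting $B_{\alpha,d,\bsgamma}((\bsq_{r-1},q_r^*),p)=B_{\alpha,d,\bsgamma}(\bsq_{r-1},p)+\theta(q_r^*)$, Jensen's inequality applied to $\theta^\lambda$, averaging over $q_r^*\in R_{b,m}$, and the combinatorial repackaging via $\phi$ to produce the $G_{\alpha,d,\lambda,a}$ factors. The telescoping observation in Step~3 is precisely what the paper exploits in the final display of its proof.

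There is, however, a genuine gap in your averaging argument. You write that ``since we took $\emptyset\ne w\ni r$ we have $l_r\ge 1$'' and then claim that $\rtr_m(l_r)$ is invertible modulo $p$ because $1\le\deg\rtr_m(l_r)<m$. This is false: when $l_r\ge 1$ is a multiple of $b^m$, one has $\rtr_m(l_r)=0$, and then the dual condition $(\bsl_w,\bszero)\in D^\perp((\bsq_{r-1},q),p)$ is satisfied either for \emph{every} $q\in R_{b,m}$ (if $\rtr_m(\bsl_{w\setminus\{r\}})\cdot\bsq_{w\setminus\{r\}}\equiv 0\pmod p$) or for none. Your ``at most one $q$'' count therefore fails on an infinite set of $l_r$, and one does \emph{not} automatically gain the factor $1/(b^m-1)$.

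The paper repairs this by splitting the sum over $l_r\ge 1$ into the cases $b^m\mid l_r$ and $b^m\nmid l_r$. In the second case your counting argument is correct. In the first case the sum over $q$ gives a factor $b^m-1$ (not $1$), but the restricted sum $\sum_{b^m\mid l_r,\,l_r\ge 1}r_{\alpha,d}^\lambda(l_r)$ carries an additional factor $b^{-2\lambda\min(\alpha,d)m}$ (this is the second part of the paper's Lemma~\ref{lem:weight_sum}); since $2\lambda\min(\alpha,d)>1$, this factor is at most $1/(b^m-1)$, so the missing gain is recovered from the decay of $r_{\alpha,d}$ rather than from the count. After this split the two pieces recombine into exactly your claimed $\frac{C_{\alpha,d,\lambda}}{b^m-1}\sum_{\bsl_w\in\nat^{|w|}}r_{\alpha,d}^\lambda(\bsl_w)$. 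Once you insert this correction, the rest of your sketch goes through unchanged.
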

The proof of Theorem \ref{theorem:cbc_bound} is given in Appendix \ref{app:b}.

\begin{remark}\label{remark:cbc}
Let $b$ be a prime. For $m,s,\alpha,d\in \nat$ with $\alpha\ge 2$, and a set of weights $\bsgamma=(\gamma_u)_{u\subseteq I_s}$, let $p\in \FF_b[x]$ and $\bsq=(q_1,\ldots,q_{ds})\in (\FF_b[x])^{ds}$ be found by Algorithm \ref{algorithm:cbc}. From Theorem \ref{theorem:cbc_bound} in which $j_1=s,d_1=d$ now, we have
  \begin{align*}
    B_{\alpha,d,\bsgamma}(\bsq,p) \le & \frac{1}{(b^m-1)^{1/\lambda}}\left[ \sum_{\emptyset \ne u \subseteq I_{s-1}}\gamma_u^{\lambda}G_{\alpha,d,\lambda,d}^{|u|}+G_{\alpha,d,\lambda,d}\sum_{u \subseteq I_{s-1}}\gamma_{u\cup \{s\}}^{\lambda}G_{\alpha,d,\lambda,d}^{|u|} \right]^{1/\lambda} \\
    = &  \frac{1}{(b^m-1)^{1/\lambda}}\left[ \sum_{\emptyset \ne u \subseteq I_s}\gamma_u^{\lambda}G_{\alpha,d,\lambda,d}^{|u|}\right]^{1/\lambda} ,
  \end{align*}
for $1/(2\min(\alpha,d))<\lambda \le 1$. As we cannot achieve the convergence rate of the mean square worst-case error of order $b^{-2\alpha m}$ in $H_{s,\alpha,\bsgamma}$ \cite{Sha63}, our result is optimal when $d\ge \alpha$. 
\end{remark}

\subsection{Fast component-by-component construction}
Here we assume product weights for the sake of simplicity and show how one can apply the fast CBC construction using the fast Fourier transform. The cost of the CBC construction by naive implementation of Algorithm \ref{algorithm:cbc} is at least of $O(dsb^{2m})$ operations, which can be reduced to $O(dsmb^{m})$ operations for the fast CBC construction using the fast Fourier transform.

According to Algorithm \ref{algorithm:cbc}, we choose an irreducible polynomial $p$ with $\deg(p)=m$, set $q_1 =1$ and construct the polynomials $q_2, \ldots, q_{ds}$ inductively in the following way. Assume that $\bsq_{r-1}=(q_1, \ldots, q_{r-1})$ are already found. Let $r-1=d(j_0-1)+d_0$ and $r=d(j_1-1)+d_1$ such that $j_0,d_0,j_1,d_1 \in \nat$ and $0<d_0,d_1\le d$. Then we have $(j_1,d_1)=(j_0+1,1)$ if $d_0=d$, and $(j_1,d_1)=(j_0,d_0+1)$ otherwise. As mentioned in Remark \ref{remark:cbc_compute}, we have to compute
  \begin{align*}
    & B_{\alpha,d,\bsgamma}((\bsq_{r-1}, q^*_r),p) \\
    = & -1+\frac{1}{b^m}\sum_{n=0}^{b^m-1}\prod_{j=1}^{j_1-1}\left[1-\gamma_j \tilde{D}_{\alpha,b,d} +\gamma_j \tilde{D}_{\alpha,b,d}\prod_{l=1}^{d}\left( 1+\chi_{\alpha,d}(y_{n,d(j-1)+l})\right) \right] \\
    & \times \left[1-\gamma_{j_1} \tilde{D}_{\alpha,b,d} +\gamma_{j_1} \tilde{D}_{\alpha,b,d}\prod_{l=1}^{d_1}\left( 1+\chi_{\alpha,d}(y_{n,d(j_1-1)+l})\right) \right] ,
  \end{align*}
for $q^*_r\in R_{b,m}$, where $\{\bsy_0,\ldots,\bsy_{b^m-1}\}\subset [0,1)^r$ is a polynomial lattice point set $P_{b^m,r}((\bsq_{r-1}, q^*_r),p)$.

Here we introduce the following notation
\begin{align*}
     \eta^{(1)}_{n,r-1} := & \prod_{j=1}^{j_1-1} \Big[1-\gamma_j\tilde{D}_{\alpha,b,d} + \gamma_j\tilde{D}_{\alpha,b,d} \prod_{l=1}^d \left( 1+\chi_{\alpha,d}(y_{n,d(j-1)+l})\right) \Big] , \\
     \eta^{(2)}_{n,r-1} := & \prod_{l=1}^{d_1-1} \left( 1 + \chi_{\alpha,d}(y_{n,d(j_1-1)+l})\right) ,
\end{align*}
where the empty product is set to $1$, and
\begin{align*}
     \eta_{n,r-1} := \eta^{(1)}_{n,r-1}\eta^{(2)}_{n,r-1} ,
\end{align*}
for $0\le n< b^m$. It is straightforward to confirm that we have
  \begin{align*}
    & B_{\alpha,d,\bsgamma}((\bsq_{r-1}, q^*_r),p) \\
    = & -1+\frac{1}{b^m}\sum_{n=0}^{b^m-1}\eta^{(1)}_{n,r-1}\left[1-\gamma_{j_1} \tilde{D}_{\alpha,b,d} +\gamma_{j_1} \tilde{D}_{\alpha,b,d}\eta^{(2)}_{n,r-1}\left( 1 + \chi_{\alpha,d}(y_{n,d(j_1-1)+d_1})\right) \right] \\
    = & -1+\frac{1}{b^m}\sum_{n=0}^{b^m-1}\eta^{(1)}_{n,r-1}\left[1-\gamma_{j_1} \tilde{D}_{\alpha,b,d} +\gamma_{j_1} \tilde{D}_{\alpha,b,d}\eta^{(2)}_{n,r-1}\right] \\
    & + \frac{\gamma_{j_1} \tilde{D}_{\alpha,b,d}}{b^m}\left[ \eta_{0,r-1}\chi_{\alpha,d}(0)+\sum_{n=1}^{b^m-1}\eta_{n,r-1}\chi_{\alpha,d}(y_{n,r})\right].
  \end{align*}
Thus in order to find $q^*_r=q_r\in R_{b,m}$ which minimizes $B_{\alpha,d,\bsgamma}((\bsq_{r-1}, q^*_r),p)$ as a function of $q^*_r$, we only need to compute 
\begin{align}\label{eq:fast_cbc}
     \sum_{n=1}^{b^m-1}\eta_{n,r-1}\chi_{\alpha,d}(y_{n,r}) ,
\end{align}
for $q^*_r\in R_{b,m}$. In the following, we show how we can exploit a feature of polynomial lattice point sets constructed according to Algorithm \ref{algorithm:cbc} to apply the fast CBC construction using the fast Fourier transform.

The key feature in Algorithm \ref{algorithm:cbc} is that we choose an irreducible polynomial $p\in \FF_b[x]$. Then there exists a primitive element $g\in R_{b,m}$, which satisfies
\begin{align*}
     \{g^0 \bmod{p}, g^1 \bmod{p}, \ldots, g^{b^m-2} \bmod{p}\} = R_{b,m},
\end{align*}
and $g^{-1} \bmod{p}=g^{b^m-2} \bmod{p}$. When $q_{r+1}=g^i \bmod{p}$, we can rewrite (\ref{eq:fast_cbc}) as
\begin{align*}
     c_i=\sum_{n=1}^{b^m-1}\eta_{n,r-1}\chi_{\alpha,d}\left(v_m\left( \frac{(g^{i-n} \bmod{p})(x)}{p(x)} \right) \right) ,
\end{align*}
for $1\le i<b^m$. 

We now define the following matrix
\begin{align*}
     \Omega_p := \left[ \chi_{\alpha,d}\left(v_m\left( \frac{(g^{i-n} \bmod{p})(x)}{p(x)} \right) \right)\right]_{1\le i,n<b^m} .
\end{align*}
This matrix is indeed circulant, see for example \cite[Chapter~10.3]{DP10}. Let us denote $\bsc= (c_1,\ldots,c_{b^m-1})^\top$ and $\bseta_{r-1}=(\eta_{1,r-1},\ldots,\eta_{b^m-1,r-1})^\top$. Then we have
\begin{align*}
     \bsc=\Omega_p  \bseta_{r-1} .
\end{align*}
Then for an integer $i_0$ ($1\le i_0< b^m$) such that $c_{i_0}\le c_i$ for $1\le i<b^m$, we set $q_r=g^{i_0}\bmod{p}$. After finding $q_r$, we need to update $\eta^{(1)}_{n,r-1}$ and $\eta^{(2)}_{n,r-1}$ as follows. If $d_1=d$,
\begin{align*}
     \left\{ \begin{array}{ll}
     \eta^{(1)}_{n,r} & = \eta^{(1)}_{n,r-1}\left[ 1-\gamma_{j_1}\tilde{D}_{\alpha,b,d} + \gamma_{j_1}\tilde{D}_{\alpha,b,d}\eta^{(2)}_{n,r-1}\left( 1+\chi_{\alpha,d}\left(v_m\left( \frac{n(x)q_r(x)}{p(x)} \right) \right) \right) \right] , \\
     \eta^{(2)}_{n,r} & = 1 . \\
	\end{array} \right.
\end{align*}
Otherwise if $d_1=1,\ldots,d-1$,
\begin{align*}
     \left\{ \begin{array}{ll}
     \eta^{(1)}_{n,r} & = \eta^{(1)}_{n,r-1} , \\
     \eta^{(2)}_{n,r} & = \eta^{(2)}_{n,r-1}\left( 1+\chi_{\alpha,d}\left(v_m\left( \frac{n(x)q_r(x)}{p(x)} \right) \right) \right) . \\
	\end{array} \right.
\end{align*}

Since the matrix $\Omega_p$ is circulant, the matrix vector multiplication $\Omega_p  \bseta_{r-1}$ can be efficiently done in $O(mb^m)$ operations by using the fast Fourier transform as shown in \cite{NC06a,NC06b}, which significantly reduces the computational cost as compared to the naive matrix vector multiplication. As for memory, we only need to store $\eta^{(1)}_{n,r}$ and $\eta^{(1)}_{n,r}$ for $1\le n<b^m$, which requires $O(b^m)$ memory space. Thus, in total, the fast CBC construction using the fast Fourier transform requires the construction cost of $O(dsmb^m)=O(dsN \log N)$ operations using $O(b^m)=O(N)$ memory. Whereas we focus only on product weights here, it is possible to apply the fast CBC construction to the case with another form of weights by minor modifications of the above procedure. We refer to \cite{CKN06,KSS11} for the fast CBC construction of lattice rules for order-dependent weights and POD (product and order-dependent) weights, respectively.

\subsection{Tractability properties}
Finally in this section, we briefly discuss the tractability properties of our algorithm. In the concept of tractability of multivariate integration, we study the dependence of the minimum number of points $N(\epsilon,s)$ on $\epsilon$ and the dimension $s$ such that $\tilde{e}(P_{N(\epsilon,s)},H_{s,\alpha,\bsgamma})\le \epsilon \tilde{e}(P_0,H_{s,\alpha,\bsgamma})$. Given that the number of points is $N=b^{m}$ and that $B_{\alpha,d,\bsgamma}(\bsq,p)$ is a bound on $\tilde{e}^2(P_{b^m},H_{s,\alpha,\bsgamma})$, we have from the inequality in Remark \ref{remark:cbc}
  \begin{align*}
     N(\epsilon,s) \le \inf_{m\in \nat}\left\{ b^m: \exists \lambda\in \left(\frac{1}{2\min(\alpha,d)},1\right], \; \frac{1}{(b^m-1)^{1/\lambda}}\left[ \sum_{\emptyset \ne u \subseteq I_s}\gamma_u^{\lambda}G_{\alpha,d,\lambda,d}^{|u|}\right]^{1/\lambda}\le \epsilon^2 \gamma_{\emptyset} \right\}  ,
  \end{align*}
Hence, it is already obvious that $N(\epsilon,s)$ depends polynomially on $\epsilon^{-1}$. As for the dependence on the dimension, we have the following corollary. Since the proof is almost the same as that of \cite[Theorem~5.2]{DP07}, we omit it.

\begin{corollary}
Let $\bsgamma=(\gamma_u)_{u\subset \nat}$ be a sequence of weights. We have the following
\begin{enumerate}
\item If there exists $\lambda\in (1/(2\min(\alpha,d)),1]$ such that we have
  \begin{align*}
     \lim_{s\to \infty}\left[ \sum_{\emptyset \ne u\subseteq I_s}\gamma_u^{\lambda}G_{\alpha,d,\lambda,d}^{|u|}\right] < \infty ,
  \end{align*}
then $N(\epsilon,s)$ is bounded above independently of the dimension.

\item If there exists $\lambda\in (1/(2\min(\alpha,d)),1]$ and $q>0$ such that we have
  \begin{align*}
     \limsup_{s\to \infty}\left[\frac{1}{s^q}\sum_{\emptyset \ne u\subseteq I_{s}}\gamma_u^{\lambda}G_{\alpha,d,\lambda,d}^{|u|}\right] < \infty ,
  \end{align*}
then $N(\epsilon,s)$ depends polynomially on the dimension.
\end{enumerate}
\end{corollary}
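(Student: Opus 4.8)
The plan is to read both statements off the quantitative bound of Remark~\ref{remark:cbc} together with the displayed estimate on $N(\epsilon,s)$ that precedes the corollary; the only substantive step is turning the hypotheses on $\lim$ and $\limsup$ into bounds that are uniform in $s$. Throughout, abbreviate $T_s(\lambda):=\sum_{\emptyset\ne u\subseteq I_s}\gamma_u^{\lambda}G_{\alpha,d,\lambda,d}^{|u|}$, the quantity occurring in both hypotheses, and recall $\tilde{e}^2(P_0,H_{s,\alpha,\bsgamma})=\gamma_{\emptyset}$ (with $\gamma_{\emptyset}>0$, so that the normalized criterion is meaningful). Since $B_{\alpha,d,\bsgamma}(\bsq,p)$ bounds $\tilde{e}^2(P_{b^m},H_{s,\alpha,\bsgamma})$ for the point set returned by Algorithm~\ref{algorithm:cbc}, Remark~\ref{remark:cbc} shows that, for any admissible $\lambda\in(1/(2\min(\alpha,d)),1]$, one has $\tilde{e}^2(P_{b^m},H_{s,\alpha,\bsgamma})\le\epsilon^2\gamma_{\emptyset}$ as soon as $(b^m-1)^{-1/\lambda}T_s(\lambda)^{1/\lambda}\le\epsilon^2\gamma_{\emptyset}$, i.e. as soon as $b^m\ge 1+T_s(\lambda)(\epsilon^2\gamma_{\emptyset})^{-\lambda}$. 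Taking $m$ minimal with this property, so that $b^{m-1}\le 1+T_s(\lambda)(\epsilon^2\gamma_{\emptyset})^{-\lambda}$, I obtain
\begin{align*}
 N(\epsilon,s)\le b^{m}=b\cdot b^{m-1}\le b\bigl(1+\gamma_{\emptyset}^{-\lambda}\epsilon^{-2\lambda}\,T_s(\lambda)\bigr),
\end{align*}
which already displays polynomial dependence on $\epsilon^{-1}$ of order $2\lambda\le 2$.

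Next I would record the elementary monotonicity fact that $T_s(\lambda)$ is non-decreasing in $s$: enlarging $I_s$ to $I_{s+1}$ adjoins only terms $\gamma_u^{\lambda}G_{\alpha,d,\lambda,d}^{|u|}\ge 0$, because the weights are non-negative and $G_{\alpha,d,\lambda,d}=\tilde{D}_{\alpha,b,d}^{\lambda}\bigl(-1+(1+C_{\alpha,d,\lambda})^{d}\bigr)>0$ (as $C_{\alpha,d,\lambda}>0$ and $\tilde{D}_{\alpha,b,d}>0$). For part~1, fix the $\lambda$ furnished by the hypothesis; monotonicity gives $\sup_{s\ge 1}T_s(\lambda)=\lim_{s\to\infty}T_s(\lambda)=:C<\infty$, and hence $N(\epsilon,s)\le b\bigl(1+\gamma_{\emptyset}^{-\lambda}\epsilon^{-2\lambda}C\bigr)$ for every $s$, a bound independent of the dimension. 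For part~2, fix the $\lambda$ and $q$ from the hypothesis and put $C:=\limsup_{s\to\infty}s^{-q}T_s(\lambda)<\infty$; choose $s_0$ with $T_s(\lambda)\le(C+1)s^{q}$ for all $s\ge s_0$, and for $s<s_0$ use monotonicity together with $s\ge 1$ to get $T_s(\lambda)\le T_{s_0}(\lambda)\le T_{s_0}(\lambda)s^{q}$, so that $T_s(\lambda)\le C's^{q}$ for all $s\ge 1$ with $C':=\max\{C+1,T_{s_0}(\lambda)\}$. Substituting yields $N(\epsilon,s)\le b\bigl(1+C'\gamma_{\emptyset}^{-\lambda}\epsilon^{-2\lambda}s^{q}\bigr)$, polynomial in $s$ (and in $\epsilon^{-1}$).

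I do not expect a genuine obstacle here: the analytic content is entirely carried by the CBC bound of Remark~\ref{remark:cbc}, and the argument is the same bookkeeping as in the proof of \cite[Theorem~5.2]{DP07}. The only points that need a moment's care are the monotonicity of $T_s(\lambda)$ in $s$ (invoked in both parts) and the routine passage from the $\lim$/$\limsup$ hypotheses to estimates valid for every $s\ge 1$.
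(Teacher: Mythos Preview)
Your proposal is correct and follows exactly the standard bookkeeping argument that the paper has in mind when it omits the proof and refers to \cite[Theorem~5.2]{DP07}: you extract the explicit bound $N(\epsilon,s)\le b\bigl(1+\gamma_{\emptyset}^{-\lambda}\epsilon^{-2\lambda}T_s(\lambda)\bigr)$ from Remark~\ref{remark:cbc} and the displayed estimate on $N(\epsilon,s)$, and then use monotonicity of $T_s(\lambda)$ together with the $\lim$/$\limsup$ hypotheses to control $T_s(\lambda)$ uniformly in~$s$. There is nothing to add.
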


\section{Numerical experiments}
\label{exp}

We conclude this paper with numerical experiments. In our experiments, the base $b$ is fixed at 2 and product weights are considered. As competitors, we employ higher order digital nets which are constructed by using the first $2^m$ points of Sobol' sequences and Niederreiter-Xing sequences, instead of polynomial lattice point sets, as interlaced components in Definition \ref{def:interlacing_polynomial_lattice}. We use Sobol' sequences as implemented in \cite{JK03} for any $ds$ and Niederreiter-Xing sequences as implemented in \cite{Pir02} as long as $4\le ds\le 16$.

In order to verify the usefulness of our constructed point sets we conduct two types of experiments. The first experiment compares the values of the quality criterion $B_{\alpha,d,\bsgamma}(\bsq,p)$ with the values of the quality criterion for the competitors. The second experiment compares the actual performance of our constructed point sets with those of the competitors using a test function. We also compare the performance of our constructed point sets with that of interlaced scrambled polynomial lattice point sets \cite{GDxx} to see the difference of randomization algorithms.

\subsection{Comparison of the quality criterion}
Here we present the values of $B_{\alpha,d,\bsgamma}(\bsq,p)$ for different choices of $m,s,\alpha,d$ and $\bsgamma$, where $p\in \FF_b[x]$ and $\bsq\in (\FF_b[x])^{ds}$ are found by Algorithm \ref{algorithm:cbc}. For the competitors, we denote the bound on the mean square worst-case error by
  \begin{align*}
    B_{\alpha,d,\bsgamma}(C_1,\ldots,C_{ds}) = \sum_{\emptyset \ne w\subseteq I_{ds}}\gamma_{\phi(w)} \tilde{D}_{\alpha,d}^{|\phi(w)|}\sum_{\substack{\bsl_w\in \nat^{|w|} \\ (\bsl_w,\bszero)\in D^{\perp}(C_1,\ldots,C_{ds})}}r_{\alpha,d}(\bsl_w) ,
  \end{align*}
where $C_1,\ldots,C_{ds}$ are generating matrices of a digital net and $D^{\perp}(C_1,\ldots,C_{ds})$ is its dual net. See \cite{DP10,Nie92a} for what generating matrices means here.

As shown in \cite[Theorem~30]{BD09} combined with \cite[Theorem~12]{BD09}, these competitors have an explicit upper bound on the mean square worst-case error in $H_{s,\alpha,\bsgamma}$, which achieves the optimal rate of convergence. Although not shown here, however, this explicit bound yields a much larger value than $B_{\alpha,d,\bsgamma}(C_1,\ldots,C_{ds})$. Furthermore, it is expected that the values of $B_{\alpha,d,\bsgamma}(C_1,\ldots,C_{ds})$ for the competitors are small enough for the following reason: the first $2^m$ points of Sobol' sequences and Niederreiter-Xing sequences are generally digital $(t,m,s)$-nets with small $t$-value, yielding the large minimum-weight $\rho(C_1,\ldots,C_s)$, which is defined as
  \begin{align*}
    \rho(C_1,\ldots,C_s) = \min_{\bsl\in D^{\perp}(C_1,\ldots,C_s)\setminus \{\bszero\}}\mu_1(\bsl).
  \end{align*}
As can be seen from the definition of $B_{\alpha,d,\bsgamma}(C_1,\ldots,C_{ds})$, our bound on the mean square worst-case error is written in terms of the weight $\mu_1(\bsl)$ for $\bsl\in D^{\perp}(C_1,\ldots,C_{ds})\setminus \{\bszero\}$. Thus, $B_{\alpha,d,\bsgamma}(C_1,\ldots,C_{ds})$ is expected to be small for the competitors, and our comparison here is reasonable in this sense.

We compare the values of $B_{\alpha,d,\bsgamma}(\bsq,p)$ with the values of $B_{\alpha,d,\bsgamma}(C_1,\ldots,C_{ds})$ for the low-dimensional cases in Tables \ref{tb:1}-\ref{tb:4}. In these tables, our constructed point sets based on polynomial lattice point sets are denoted by PLPS for short, and similarly, point sets based on Sobol' sequences and Niederreiter-Xing sequences are respectively denoted by Sobol' and N-X for short. In Tables \ref{tb:1} and \ref{tb:2}, we consider $\gamma_j=1$ for $1\le j\le s$, that is the so-called unweighted case. In Table \ref{tb:1}, we fix $\alpha=d=2$ and change the dimension from $s=1$ to $s=5$. In Table \ref{tb:2}, we fix $s=3$ and change $\alpha$ and $d$ simultaneously. In Tables \ref{tb:3} and \ref{tb:4}, we do similar comparisons for the case $\gamma_j=j^{-2}$ for $1\le j\le s$. In most cases, PLPS outperforms both Sobol' and N-X.

We also consider higher-dimensional cases. In Tables \ref{tb:5} and \ref{tb:6}, we compare the values of $B_{\alpha,d,\bsgamma}(\bsq,p)$ with the values of $B_{\alpha,d,\bsgamma}(C_1,\ldots,C_{ds})$ for $s=10,20,50$, where we fix $\alpha=d=2$. We consider the weights $\gamma_j=1$ and $\gamma_j=j^{-2}$ for $1\le j\le s$ in Tables \ref{tb:5} and \ref{tb:6}, respectively. Although PLPS and Sobol' are comparable for the unweighted case, PLPS outperforms Sobol' for the weighted case.

\subsection{Actual performance for integration}\label{subsec:test}
Finally, we compare the actual performance of our constructed point sets with those of the competitors and interlaced scrambled polynomial lattice point sets using a test function. We consider the function
  \begin{align*}
    f(x_1,\ldots,x_s) = \left( 1+\sum_{j=1}^{s}\frac{x_j}{j^2}\right)^{-1} ,
  \end{align*}
where $0\le x_j< 1$ for all $1\le j\le s$. This function is a simplified model of an elliptic partial differential equation with random coefficients \cite{KSS11}.

For a point set $P_N=\{\bsx_0,\ldots,\bsx_{N-1}\}\subset [0,1)^s$, we measure the performance of $P_N$ by using the root mean square error with respect to a random digital shift. The error estimation is done in the same way as that in \cite[Section~2.9]{DKS13}: We generate $r$ independent random digital shift $\bssigma_1,\ldots,\bssigma_r$ from the uniform distribution on $[0,1)^s$. For $l=1,\ldots,r$, we compute the approximation of $I(f)$ by 
  \begin{align*}
    Q(f;P_{N,\bssigma_l})=\frac{1}{N}\sum_{n=0}^{N-1}f(\bsx_n\oplus \bssigma_l) .
  \end{align*}
Then we take the average
  \begin{align*}
    \bar{Q}(f;P_N)=\frac{1}{r}\sum_{l=1}^{r}Q(f;P_{N,\bssigma_l})  ,
  \end{align*}
which is the final approximation to the integral. An unbiased estimator for the root mean square error of $\bar{Q}(f;P_N)$ is given by
  \begin{align*}
    \mathrm{rmse}(f;P_N):=\sqrt{\frac{1}{r(r-1)}\sum_{l=1}^{r}\left( Q(f;P_{N,\bssigma_l})-\bar{Q}(f;P_N)\right)^2}  .
  \end{align*}

For interlaced scrambled polynomial lattice point sets, we measure their performance by using the root mean square error with respect to a random scrambling. Here a random scrambling is applied to polynomial lattice point sets first and then the resulting point sets are digitally interlaced, see \cite{GDxx}. The error estimation can be done in the same way as above.

In our experiment, we set $r=50$. For every choice of $m$ and $s$, $p\in \FF_b[x]$ and $\bsq\in (\FF_b[x])^{ds}$ are found by Algorithm \ref{algorithm:cbc}, where we set $\alpha=d=2$ and consider the product weights $\gamma_j=j^{-2}$ for $1\le j\le s$. Then our point set $P_{2^m}$ is constructed according to Definition \ref{def:interlacing_polynomial_lattice}. The same point set is used for interlaced scrambled polynomial lattice point sets. Regarding the competitors, the point set $P_{2^m}$ is constructed  using the first $2^m$ points of Sobol' sequences and Niederreiter-Xing sequences as interlaced components in Definition \ref{def:interlacing_polynomial_lattice}, where we consider $d=2$.

We compare the values of $\mathrm{rmse}(f;P_{2^m})$ for our constructed point sets, the competitors and interlaced scrambled polynomial lattice point sets in Tables \ref{tb:7}--\ref{tb:9} for several choices of $s$. In these tables, interlaced scrambled polynomial lattice point sets are denoted by PLPS-sc for short. Regardless of the low-dimensional and higher-dimensional cases, PLPS often outperforms Sobol'. PLPS and N-X are comparable for $s=2$, while PLPS often outperforms N-X for $s=5$. These results indicate the usefulness of our constructed point sets. Moreover, while PLPS-sc outperforms PLPS for $s=1$, PLPS-sc and PLPS are comparable for larger $s$. Thus, for high-dimensional numerical integration, randomization by a digital shift can be used in place of scrambling as a much cheaper way to obtain some statistical estimate on the integration error.

\section*{Acknowledgement}
The author would like to thank two anonymous referees for their careful reading of the manuscript and their many helpful comments and suggestions.

\appendix
\section{Proofs of Theorem \ref{thm:upper_bound} and Corollary \ref{cor:upper_bound}}\label{app:a}

Following \cite{Dic11}, we define a digit interlacing function of non-negative integers. For $d\in \nat$ and $(l_1,\ldots, l_d)\in \nat_0^d$, we denote the $b$-adic expansion of $l_j$ by $l_j=\kappa_{j,0}+\kappa_{j,1}b+\cdots $ for $1\le j\le d$. Then a digit interlacing function $\Ecal_d: \nat_0^d\to \nat_0$ is defined by
  \begin{align}\label{eq:integer_interlace}
    \Ecal_d(l_1,\ldots, l_d) := \sum_{a=0}^{\infty}\sum_{j=1}^{d}\kappa_{j,a}b^{ad+j-1} .
  \end{align}
We note that the above sum is actually a finite sum since the $b$-adic expansions of $l_1,\ldots, l_d$ are finite. We extend the function $\Ecal_d$ to vectors as
  \begin{align*}
    \Ecal_d(l_1,\ldots, l_{ds})=(\Ecal_d(l_1,\ldots, l_d),\ldots, \Ecal_d(l_{d(s-1)+1},\ldots, l_{ds})) ,
  \end{align*}
for $(l_1,\ldots, l_{ds})\in \nat_0^{ds}$. It is straightforward to confirm that $\Ecal_d$ is a bijection. We have the following lemma, which has already appeared in \cite[Section~2.3]{Dic11}.

\begin{lemma}\label{lem:interlace_integer}
For $d\in \nat$, let $(l_1,\ldots, l_d)\in \nat_0^d$ and $(y_1,\ldots,y_d)\in [0,1)^d$. We have
  \begin{align*}
    \wal_{\Ecal_d(l_1,\ldots, l_d)}(\Dcal_d(y_1,\ldots,y_d)) = \prod_{j=1}^{d}\wal_{l_j}(y_j) .
  \end{align*}
\end{lemma}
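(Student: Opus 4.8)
The plan is to unwind both sides of the claimed identity down to the level of individual digits, using the defining formulas for the digit interlacing functions $\Dcal_d$ and $\Ecal_d$ together with the explicit definition of one-dimensional Walsh functions. Since a product of one-dimensional Walsh functions is a complex number of modulus one whose argument is a sum of the arguments of the factors, it suffices to track the exponents of $\omega_b$ on each side and show they agree.

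First I would fix $(l_1,\ldots,l_d)\in\nat_0^d$ and $(y_1,\ldots,y_d)\in[0,1)^d$, and write the $b$-adic expansions $l_j=\sum_{a\ge 0}\kappa_{j,a}b^a$ and $y_j=\sum_{c\ge 1}\xi_{j,c}b^{-c}$ with $\kappa_{j,a},\xi_{j,c}\in\FF_b$, the latter being the canonical expansion (infinitely many $\xi_{j,c}\neq b-1$). By definition of $\Dcal_d$, the point $z:=\Dcal_d(y_1,\ldots,y_d)$ has $b$-adic digits $\zeta_{r+(a-1)d}=\xi_{r,a}$ for $1\le r\le d$ and $a\ge 1$; equivalently, writing the position index as $p=r+(a-1)d$, the $p$-th digit of $z$ is the $a$-th digit of $y_r$ where $p\equiv r\pmod d$ (with $r\in\{1,\ldots,d\}$) and $a=\lceil p/d\rceil$. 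Next, by the definition \eqref{eq:integer_interlace} of $\Ecal_d$, the integer $k:=\Ecal_d(l_1,\ldots,l_d)=\sum_{a\ge 0}\sum_{j=1}^d\kappa_{j,a}b^{ad+j-1}$ has, as its coefficient of $b^{ad+j-1}$, precisely $\kappa_{j,a}$; so if we index the $b$-adic digits of $k$ starting from $0$, the digit in position $ad+j-1$ equals $\kappa_{j,a}$.

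Then I would plug these into the definition of $\wal_k(z)$. Recall $\wal_k(z)=\omega_b^{\sum_{i} \zeta_i\,(\text{digit }i-1\text{ of }k)}$, where the sum runs over positions $i\ge 1$ of the fractional expansion of $z$ and $i-1$ is the matching integer-digit position of $k$. Writing $i=r+(a-1)d$ with $r\in\{1,\ldots,d\}$, $a\ge 1$, we have $i-1=(a-1)d+(r-1)$, so the relevant digit of $k$ is $\kappa_{r,a-1}$, and the relevant digit of $z$ is $\xi_{r,a}$. Hence the exponent of $\omega_b$ in $\wal_k(z)$ is $\sum_{r=1}^d\sum_{a\ge 1}\xi_{r,a}\,\kappa_{r,a-1}$. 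On the other hand, the exponent of $\omega_b$ in $\prod_{j=1}^d\wal_{l_j}(y_j)$ is $\sum_{j=1}^d\sum_{c\ge 1}\xi_{j,c}\,\kappa_{j,c-1}$, which is manifestly the same sum after relabeling $(j,c)\leftrightarrow(r,a)$. This proves the identity.

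The main subtlety — and the only thing needing genuine care rather than bookkeeping — is the off-by-one interaction between the two indexing conventions: $\Dcal_d$ indexes fractional digits of $y_j$ starting at $1$, while $\Ecal_d$ indexes integer digits of $l_j$ starting at $0$, and the definition of $\wal_k$ already contains a shift (the $\kappa$-indices start at $0$, the $\xi$-indices at $1$). I would therefore be explicit that the block structure of $\Dcal_d$ (blocks of $d$ digits, the $r$-th digit of block $a$ coming from $y_r$) is exactly compatible with the block structure of $\Ecal_d$ (the coefficient of $b^{ad+j-1}$ coming from $l_j$), so that digit position $r+(a-1)d$ of $z$ is paired with digit position $(a-1)d+r-1$ of $k$, i.e.\ with $\kappa_{r,a-1}$. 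One should also note the finiteness of all the sums: the expansions of $l_1,\ldots,l_d$ are finite, so $k$ is a well-defined non-negative integer, and $\wal_k$ depends only on finitely many digits of its argument, so the rearrangement of the (finite) exponent sum is unproblematic. Everything else is a routine substitution.
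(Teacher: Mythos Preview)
Your proof is correct and follows essentially the same approach as the paper's: both expand the $b$-adic digits of the interlaced integer $\Ecal_d(l_1,\ldots,l_d)$ and the interlaced point $\Dcal_d(y_1,\ldots,y_d)$, then verify directly from the definition of Walsh functions that the exponent of $\omega_b$ factors as $\sum_{j=1}^d\sum_{a\ge 0}\kappa_{j,a}\xi_{j,a+1}$. Your treatment is somewhat more explicit about the off-by-one index matching than the paper's, but the argument is the same.
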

\begin{proof}
We denote the $b$-adic expansion of $l_j$ by $l_j=\kappa_{j,0}+\kappa_{j,1}b+\cdots $, and the $b$-adic expansion of $y_j$ by $y_j=\eta_{j,1}b^{-1}+\eta_{j,2}b^{-2}+\cdots$ for $1\le j\le d$. Then we have
  \begin{align*}
    \Ecal_d(l_1,\ldots, l_d) = \sum_{a=0}^{\infty}\sum_{j=1}^{d}\kappa_{j,a}b^{ad+j-1} ,
  \end{align*}
and 
  \begin{align*}
    \Dcal_d(y_1,\ldots, y_d) = \sum_{a=1}^{\infty}\sum_{j=1}^{d}\frac{\eta_{j,a}}{b^{j+(a-1)d}} .
  \end{align*}
Thus, from the definition of Walsh functions, we have
  \begin{align*}
    \wal_{\Ecal_d(l_1,\ldots, l_d)}(\Dcal_d(y_1,\ldots,y_d)) & = \omega_b^{\sum_{a=0}^{\infty}\sum_{j=1}^{d}\kappa_{j,a}\eta_{j,a+1}}\\
    & = \prod_{j=1}^{d}\omega_b^{\sum_{a=0}^{\infty}\kappa_{j,a}\eta_{j,a+1}} = \prod_{j=1}^{d}\wal_{l_j}(y_j) ,
  \end{align*}
which completes the proof.
\end{proof}

Using Lemma \ref{lem:interlace_integer}, we have the following result, which bridges between a higher order digital net constructed according to Definition \ref{def:interlacing_polynomial_lattice} and Walsh functions.
\begin{lemma}\label{lem:dual_net_interlace}
Let $b$ be a prime and let $m, s, d \in \nat$. Let $P_{b^m,ds}(\bsq,p)=\{\bsy_0,\ldots,\bsy_{b^m-1}\}\subset [0,1)^{ds}$ be a polynomial lattice point set and let $D^{\perp}(\bsq,p)$ be its dual net. Further, let $P_{b^m}=\{\bsx_0,\ldots,\bsx_{b^m-1}\}\subset [0,1)^{s}$ be a higher order digital net constructed according to Definition \ref{def:interlacing_polynomial_lattice}. Then we have
  \begin{align*}
     \frac{1}{b^m}\sum_{n=0}^{b^m-1}\wal_{\bsk}(\bsx_n)=\left\{ \begin{array}{ll}
     1 & \mbox{if there exists}\ \bsl\in D^{\perp}(\bsq,p)\ \mbox{such that}\ \bsk= \Ecal_d(\bsl), \\
     0 & \mbox{otherwise} . \\
     \end{array} \right.
  \end{align*}
\end{lemma}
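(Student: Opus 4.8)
The plan is to reduce the claim about the interlaced higher order digital net $P_{b^m}$ to the corresponding statement about the underlying polynomial lattice point set $P_{b^m,ds}(\bsq,p)$, which is already available as Lemma~\ref{lemma:dual_walsh}. First I would fix $\bsk=(k_1,\ldots,k_s)\in\nat_0^s$ and write each coordinate of $\bsx_n$ as $x_{n,j}=\Dcal_d(y_{n,d(j-1)+1},\ldots,y_{n,dj})$, which holds by construction (Definition~\ref{def:interlacing_polynomial_lattice}). Using that $\Ecal_d$ is a bijection on $\nat_0^d$, I would let $(l_{d(j-1)+1},\ldots,l_{dj}):=\Ecal_d^{-1}(k_j)$ for each $j$, so that $\bsk=\Ecal_d(\bsl)$ for a uniquely determined $\bsl=(l_1,\ldots,l_{ds})\in\nat_0^{ds}$. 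Then by the multiplicativity of Walsh functions across coordinates and Lemma~\ref{lem:interlace_integer} applied in each block of $d$ coordinates,
\begin{align*}
  \wal_{\bsk}(\bsx_n)=\prod_{j=1}^s \wal_{k_j}(x_{n,j})
  =\prod_{j=1}^s \wal_{\Ecal_d(l_{d(j-1)+1},\ldots,l_{dj})}\bigl(\Dcal_d(y_{n,d(j-1)+1},\ldots,y_{n,dj})\bigr)
  =\prod_{j=1}^{ds}\wal_{l_j}(y_{n,j})=\wal_{\bsl}(\bsy_n).
\end{align*}

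Averaging this identity over $n=0,\ldots,b^m-1$ gives
\begin{align*}
  \frac{1}{b^m}\sum_{n=0}^{b^m-1}\wal_{\bsk}(\bsx_n)=\frac{1}{b^m}\sum_{n=0}^{b^m-1}\wal_{\bsl}(\bsy_n),
\end{align*}
and Lemma~\ref{lemma:dual_walsh} evaluates the right-hand side as $1$ if $\bsl\in D^{\perp}(\bsq,p)$ and $0$ otherwise. Finally I would translate the condition back: since $\Ecal_d$ is a bijection, saying ``$\bsl\in D^{\perp}(\bsq,p)$ where $\bsk=\Ecal_d(\bsl)$'' is exactly the same as saying ``there exists $\bsl\in D^{\perp}(\bsq,p)$ with $\bsk=\Ecal_d(\bsl)$'', because such an $\bsl$, if it exists, is unique. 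This yields the stated dichotomy.

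I do not expect a serious obstacle here: the proof is essentially bookkeeping, combining the bijectivity of $\Ecal_d$, the product structure of Walsh functions, Lemma~\ref{lem:interlace_integer}, and Lemma~\ref{lemma:dual_walsh}. The only point that needs a little care is the indexing: making sure that the $d$-block decomposition of $\bsk$ under $\Ecal_d^{-1}$ matches the block decomposition of $\bsy_n$ used in $\Dcal_d$, and that the empty/zero cases (e.g. $k_j=0$ forcing $l_{d(j-1)+1}=\cdots=l_{dj}=0$) are handled consistently with the convention $\mu_\alpha(0)=0$ and $\wal_0\equiv 1$ used elsewhere. Keeping the bijection $\Ecal_d$ explicit throughout avoids any ambiguity in the ``there exists'' clause of the statement.
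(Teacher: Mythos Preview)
Your proposal is correct and follows essentially the same route as the paper's proof: both use the bijectivity of $\Ecal_d$ to associate to $\bsk$ a unique $\bsl\in\nat_0^{ds}$, apply Lemma~\ref{lem:interlace_integer} to rewrite $\wal_{\bsk}(\bsx_n)=\wal_{\bsl}(\bsy_n)$, and then invoke Lemma~\ref{lemma:dual_walsh}. The only cosmetic difference is that you spell out the blockwise application of Lemma~\ref{lem:interlace_integer}, whereas the paper uses the vector extensions of $\Ecal_d$ and $\Dcal_d$ to write this in one line.
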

\begin{proof}
Let $\bsk\in \nat_0^s$. Since $\Ecal_d$ is a bijection, there exists exactly one $ds$-dimensional vector of integers $\bsl=(l_1,\ldots,l_{ds})\in \nat_0^{ds}$ such that $\Ecal_d(\bsl)=\bsk$. For this $\bsl$, we have
  \begin{align*}
     \frac{1}{b^m}\sum_{n=0}^{b^m-1}\wal_{\bsk}(\bsx_n) = & \frac{1}{b^m}\sum_{n=0}^{b^m-1}\wal_{\Ecal_d(\bsl)}(\Dcal_d(\bsy_n)) \\
     = & \frac{1}{b^m}\sum_{n=0}^{b^m-1}\wal_{\bsl}(\bsy_n) \\
     = & \left\{ \begin{array}{ll}
     1 & \mbox{if}\ \bsl\in D^{\perp}(\bsq,p) , \\
     0 & \mbox{otherwise} , \\
     \end{array} \right.
  \end{align*}
where we use Lemmas \ref{lem:interlace_integer} and \ref{lemma:dual_walsh} in the second and third equalities, respectively. Hence the result follows.
\end{proof}

Let $\emptyset \ne w\subseteq I_{ds}$ and $\bsl_w\in \nat^{|w|}$. In the following lemma, we give a lower bound on $\mu_{\alpha}(\Ecal_d(\bsl_w,\bszero))$.

\begin{lemma}\label{lemma:weight}
For $\emptyset \ne w\subseteq I_{ds}$ and $\bsl_w\in \nat^{|w|}$, we have
  \begin{align}\label{eq:interlace_weight}
    \mu_{\alpha}(\Ecal_d(\bsl_w,\bszero)) \ge \min(\alpha,d)\sum_{r\in w}\mu_1(l_r)+\frac{1}{2}\alpha|w|-\frac{1}{2}\alpha(2d-1)|\phi(w)| ,
  \end{align}
where $\phi$ is defined as in Theorem \ref{thm:upper_bound}.
\end{lemma}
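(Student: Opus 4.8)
The plan is to analyze how the weight $\mu_\alpha$ behaves under the integer interlacing map $\Ecal_d$ by tracking, coordinate block by coordinate block, where the nonzero $b$-adic digits of the interlaced integer sit. Fix $\emptyset\ne w\subseteq I_{ds}$ and $\bsl_w\in\nat^{|w|}$, and for each $j\in\phi(w)$ consider the block $w_j:=w\cap\{d(j-1)+1,\ldots,dj\}$, which is nonempty by definition of $\phi$. Since $\Ecal_d$ interlaces in non-overlapping blocks, the integer $\Ecal_d(\bsl_w,\bszero)$ decomposes as a sum over $j\in\phi(w)$ of contributions coming only from $\{l_r:r\in w_j\}$, and these contributions occupy disjoint ranges of $b$-adic digit positions. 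So the first step is to reduce the claimed bound to a per-block statement: it suffices to show, for each $j\in\phi(w)$, that the part of $\mu_\alpha(\Ecal_d(\bsl_w,\bszero))$ attributable to block $j$ is at least $\min(\alpha,d)\sum_{r\in w_j}\mu_1(l_r)+\tfrac12\alpha|w_j|-\tfrac12\alpha(2d-1)$, and then sum over $j\in\phi(w)$. (One subtlety here: $\mu_\alpha$ only counts the $\alpha$ largest digit positions of the \emph{whole} integer, not of each block, so this per-block reduction needs a short monotonicity/superadditivity argument — see below.)

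The second step is the combinatorial heart: understanding $\mu_\alpha$ of a single interlaced block. Within block $j$, if $l_r$ (for $r=d(j-1)+t$, $t\in\{1,\ldots,d\}$) has nonzero digits in positions $a_{r,1}>a_{r,2}>\cdots$, then after interlacing these become nonzero digits of $\Ecal_d(\bsl_w,\bszero)$ in positions of the form $ad+t-1$ plus a global offset, and $\mu_1$ of that single digit position is $ad+t$ (roughly $d$ times the original position, since $\mu_1(l_r)=\sum a_{r,i}$ when $\alpha$ is replaced by $1$... more precisely $\mu_1(l_r)=a_{r,1}$ is the leading position, but here we want the full $\mu_\alpha$-style sum). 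I would carefully track that the $i$-th nonzero digit of $l_r$ contributes a position that is at least $d(a_{r,i}-1)+1$ and at most $d\,a_{r,i}$ after interlacing, so each such digit contributes between $d(a_{r,i}-1)+1$ and $d\,a_{r,i}$ to the relevant $\mu$-sum. Summing the lower bound $d(a_{r,i}-1)+1 = d\,a_{r,i}-(d-1)$ over all nonzero digits of all $l_r$ in the block gives something like $d\sum_{r\in w_j}\mu_1(l_r)$ minus a correction of $(d-1)$ per digit; bounding the number of nonzero digits and noting that $\mu_\alpha$ only retains the $\alpha$ largest positions of the block (which cost at least... this is where the $\min(\alpha,d)$ appears: within one block there can be at most... we only get to count down to position offsets that are $\ge$ something) yields the $\min(\alpha,d)$ factor and the $\tfrac12\alpha$ correction terms. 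The constants $\tfrac12\alpha|w|$ and $\tfrac12\alpha(2d-1)|\phi(w)|$ strongly suggest averaging the crude per-digit lower and upper estimates (hence the $\tfrac12$), so I would set up the bound as an average of two one-sided estimates on digit positions.

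The third step is bookkeeping: expand $|w|=\sum_{j\in\phi(w)}|w_j|$ and $|\phi(w)|=\sum_{j\in\phi(w)}1$, substitute the per-block bounds, and collect terms. Throughout I would use that $\mu_\alpha(k)=\sum_{i=1}^{\min(\nu,\alpha)}a_i$ with $a_1>\cdots>a_\nu$ being \emph{all} nonzero-digit positions of $k$, so $\mu_\alpha(k)\ge \mu_\alpha(k')$ whenever $k'$ is obtained from $k$ by deleting some nonzero digits — this monotonicity justifies passing from the full interlaced integer down to a convenient sub-collection of its digits, and in particular justifies lower-bounding $\mu_\alpha(\Ecal_d(\bsl_w,\bszero))$ by the contribution of the leading $\min(\alpha,d)$ digits within \emph{each} block separately only after checking that cross-block the $\alpha$ largest overall are at least as large as the per-block leading ones (they are, since taking fewer top positions gives a smaller sum and the blocks' position ranges are comparable).

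\textbf{Main obstacle.} I expect the genuinely delicate point to be pinning down the exact interplay between "$\mu_\alpha$ keeps only the top $\alpha$ digit positions of the \emph{global} integer" and the block decomposition: naively one would like to say $\mu_\alpha(\Ecal_d(\bsl_w,\bszero))\ge\sum_j \mu_{\alpha}(\text{block }j)$, but that is false because $\mu_\alpha$ is concave-like in the number of retained digits — truncating to $\alpha$ digits globally is \emph{stronger} than truncating each block to $\alpha$ digits, not weaker. The right move is to truncate each block only to its $\min(\alpha,d)$... no — rather, to observe that the top $\alpha$ digit positions of the whole integer, wherever they land, have $\mu$-contribution bounded below by what we get from the top $\lceil \alpha/|\phi(w)|\rceil$ per block, or more cleanly to prove the block bound with the $\min(\alpha,d)$ coefficient already baked in so that summing is legitimate because within a single block at most $d$ positions per "digit layer" are occupied, capping the effective count at $\min(\alpha,d)$ per block regardless. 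Getting this capping argument airtight, and making sure the $-\tfrac12\alpha(2d-1)|\phi(w)|$ slack is exactly enough to absorb the per-block truncation losses, is where the real care is needed; the rest is the routine digit arithmetic I have sketched above.
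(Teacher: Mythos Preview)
Your proposal rests on a misreading of the object $\mu_\alpha(\Ecal_d(\bsl_w,\bszero))$. Recall from the paper's conventions that the extended interlacing map sends $\nat_0^{ds}$ to $\nat_0^{s}$, so $\Ecal_d(\bsl_w,\bszero)$ is an $s$-dimensional \emph{vector} whose $j$-th component is $\Ecal_d$ applied to the $d$-tuple $(l_{d(j-1)+1},\ldots,l_{dj})$; and $\mu_\alpha$ on a vector is by definition the \emph{sum} $\sum_j \mu_\alpha(k_j)$ over components. Consequently the block decomposition you are worried about is an exact \emph{equality},
\[
\mu_\alpha(\Ecal_d(\bsl_w,\bszero)) \;=\; \sum_{j\in\phi(w)} \mu_\alpha\bigl(\Ecal_d(\bsl_{w_j},\bszero)\bigr),
\]
with $(\bsl_{w_j},\bszero)\in\nat_0^d$ the restriction to block $j$. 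Your entire ``main obstacle'' paragraph --- the concern that $\mu_\alpha$ truncates to the top $\alpha$ positions of a single global integer, the false superadditivity, the attempted cross-block capping argument --- evaporates: there is no global truncation across blocks because each block lives in its own coordinate of the output vector.

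Once this is clear, the genuine work is the single-block inequality $\mu_\alpha(\Ecal_d(\bsl_{w_j},\bszero)) \ge \min(\alpha,d)\sum_{r\in w_j}\mu_1(l_r)+\tfrac12\alpha|w_j|-\tfrac12\alpha(2d-1)$, and here your sketch is also off target. The paper does not track all nonzero digits of each $l_r$; it keeps only the \emph{leading} digit of each $l_r$ (this is legitimate since deleting digits can only decrease $\mu_\alpha$), producing an integer with exactly $|w_j|$ nonzero digits sitting at positions $d(\mu_1(l_{d(j-1)+r})-1)+r$ for the relevant $r\in\{1,\ldots,d\}$. Then it splits into two cases: if $|w_j|\le\alpha$, $\mu_\alpha$ of the truncated integer equals the sum of all $|w_j|$ positions; if $|w_j|>\alpha$, the sum of the top $\alpha$ positions is at least $\alpha/|w_j|$ times the sum of all positions (a plain averaging bound). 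In both cases the factor $\tfrac12$ arises from the elementary inequality $\sum_{r\in S}r\ge 1+2+\cdots+|S|=\tfrac12|S|(|S|+1)$ for $S\subseteq\{1,\ldots,d\}$, not from averaging two one-sided digit-position estimates as you conjectured.
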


\begin{proof}
From the definition of $\phi$, we have $w\cap\{d(j-1)+1,\ldots, dj\}=\emptyset$ for $j\in I_s\setminus \phi(w)$. By denoting by $w_j$ the index set $w\cap\{d(j-1)+1,\ldots, dj\}$ for $j\in \phi(w)$, it is obvious that we have
  \begin{align*}
    \sum_{r\in w}\mu_1(l_r) = \sum_{j\in \phi(w)}\sum_{r\in w_j}\mu_1(l_r) ,
  \end{align*}
and
  \begin{align*}
    |w|=\sum_{j\in \phi(w)}|w_j| .
  \end{align*}
Using these equalities, the right-hand side of (\ref{eq:interlace_weight}) can be written as
  \begin{align*}
    & \min(\alpha,d)\sum_{r\in w}\mu_1(l_r)+\frac{1}{2}\alpha|w|-\frac{1}{2}\alpha(2d-1)|\phi(w)| \\
    = & \sum_{j\in \phi(w)}\left[ \min(\alpha,d)\sum_{r\in w_j}\mu_1(l_r)+\frac{1}{2}\alpha|w_j|-\frac{1}{2}\alpha(2d-1)\right] .
  \end{align*}

For $j\in \phi(w)$, we denote by $(\bsl_{w_j},\bszero)$ the $d$-dimensional vector with indices $\{d(j-1)+1,\ldots, dj\}$ whose $r$-th component is $l_r$ if $r\in w_j$ and $0$ if $r\in \{d(j-1)+1,\ldots, dj\}\setminus w_j$. Then, the left-hand side of (\ref{eq:interlace_weight}) becomes
  \begin{align*}
    \mu_{\alpha}(\Ecal_d(\bsl_w,\bszero)) = \sum_{j\in \phi(w)}\mu_{\alpha}(\Ecal_d(\bsl_{w_j},\bszero)) .
  \end{align*}
Thus, in order to prove this lemma, it suffices to prove
  \begin{align*}
    \mu_{\alpha}(\Ecal_d(\bsl_{w_j},\bszero)) \ge \min(\alpha,d)\sum_{r\in w_j}\mu_1(l_r)+\frac{1}{2}\alpha|w_j|-\frac{1}{2}\alpha(2d-1) ,
  \end{align*}
for $j\in \phi(w)$. Therefore, we focus on proving the last inequality below.

For $1\le r\le d$ such that $d(j-1)+r\in w_j$, we denote the $b$-adic expansion of $l_{d(j-1)+r}$ by $l_{d(j-1)+r}=\kappa_{r,0}+\kappa_{r,1}b+\cdots +\kappa_{r,\beta-1}b^{\beta-1}$ where $\kappa_{r,\beta-1}\ne 0$. From the definition of $\mu_1$ as in (\ref{eq:dick_weight}) where we set $\alpha=1$, we obtain $\beta=\mu_1(l_{d(j-1)+r})$. Thus we have
  \begin{align*}
    \Ecal_d(\bsl_{w_j},\bszero) = & \sum_{\substack{1\le r\le d\\ d(j-1)+r\in w_j}}\sum_{a=0}^{\mu_1(l_{d(j-1)+r})-1}\kappa_{r,a}b^{ad+r-1} \\
    \ge & \sum_{\substack{1\le r\le d\\ d(j-1)+r\in w_j}}\kappa_{r,\mu_1(l_{d(j-1)+r})-1}b^{d(\mu_1(l_{d(j-1)+r})-1)+r-1} ,
  \end{align*}
where the last inequality is obtained by considering only the term with $a=\mu_1(l_{d(j-1)+r})-1$ in the inner sum.
From the definition of $\mu_{\alpha}$, it is obvious that
  \begin{align}\label{eq:interlace_integer2}
    \mu_{\alpha}\left( \Ecal_d(\bsl_{w_j},\bszero)\right) \ge \mu_{\alpha}\left(\sum_{\substack{1\le r\le d\\ d(j-1)+r\in w_j}}\kappa_{r,\mu_1(l_{d(j-1)+r})-1}b^{d(\mu_1(l_{d(j-1)+r})-1)+r-1}\right) .
  \end{align}

Let us consider the case $|w_j|\le \alpha$ first. Since the sum on the right-hand side of the inequality (\ref{eq:interlace_integer2}) consists of $|w_j|$ terms, we have
  \begin{align*}
    \mu_{\alpha}(\Ecal_d(\bsl_{w_j},\bszero)) & \ge \mu_{\alpha}\left(\sum_{\substack{1\le r\le d\\ d(j-1)+r\in w_j}}\kappa_{r,\mu_1(l_{d(j-1)+r})-1}b^{d(\mu_1(l_{d(j-1)+r})-1)+r-1} \right) \\
    & = \sum_{\substack{1\le r\le d\\ d(j-1)+r\in w_j}}\left[ d(\mu_1(l_{d(j-1)+r})-1)+r \right] \\
    & = d\sum_{r\in w_j}\left[\mu_1(l_r) - 1\right] +\sum_{\substack{1\le r\le d\\ d(j-1)+r\in w_j}}r \\
    & \ge d\sum_{r\in w_j}\mu_1(l_r) - d|w_j|+\sum_{1\le r\le |w_j|}r \\
    & = d\sum_{r\in w_j}\mu_1(l_r) - d|w_j|+\frac{1}{2}|w_j|(|w_j|+1) \\
    & \ge \min(\alpha,d) \sum_{r\in w_j}\mu_1(l_r) - \alpha d+\frac{1}{2}\alpha (|w_j|+1) .
  \end{align*}
Let us consider the case $\alpha < |w_j|$ next. In this case, the sum on the right-hand side of the inequality (\ref{eq:interlace_integer2}) contains more than $\alpha$ terms. Therefore, by using an averaging argument, we obtain
  \begin{align*}
    \mu_{\alpha}(\Ecal_d(\bsl_{w_j},\bszero)) & \ge \mu_{\alpha}\left(\sum_{\substack{1\le r\le d\\ d(j-1)+r\in w_j}}\kappa_{r,\mu_1(l_{d(j-1)+r})-1}b^{d(\mu_1(l_{d(j-1)+r})-1)+r-1} \right) \\
    & \ge \frac{\alpha}{|w_j|}\sum_{\substack{1\le r\le d\\ d(j-1)+r\in w_j}}\left[ d(\mu_1(l_{d(j-1)+r})-1)+r \right] \\
    & = \frac{\alpha d}{|w_j|}\sum_{r\in w_j}\left[\mu_1(l_r) - 1\right] +\frac{\alpha}{|w_j|}\sum_{\substack{1\le r\le d\\ d(j-1)+r\in w_j}}r \\
    & \ge \alpha \sum_{r\in w_j}\mu_1(l_r) - \alpha d +\frac{\alpha}{|w_j|}\sum_{1\le r\le |w_j|}r \\
    & \ge \min(\alpha,d) \sum_{r\in w_j}\mu_1(l_r) - \alpha d+\frac{1}{2}\alpha (|w_j|+1) .
  \end{align*}
Putting the two cases above together, the result follows.
\end{proof}

We are now ready to prove Theorem \ref{thm:upper_bound} and Corollary \ref{cor:upper_bound}.

\begin{proof}[Proof of Theorem \ref{thm:upper_bound}]
Since $\Ecal_d$ is a bijection, the sum over the $|u|$-dimensional vectors $\bsk_u \in \nat^{|u|}$ in (\ref{eq:mse2}) is equal to the sum over the $d|u|$-dimensional vectors $\hat{\bsl}_u=(l_{d(j-1)+1},\cdots,l_{dj})_{j\in u}\in (\nat_0^{d}\setminus \{\bszero\})^{|u|}$, where $\bszero$ is the vector of $d$ zeros, by replacing $\bsk_u$ by $\Ecal_d(\hat{\bsl}_u)=(\Ecal_d(l_{d(j-1)+1},\cdots,l_{dj}))_{j\in u}$. That is, we have
  \begin{align*}
    & \tilde{e}^2(P_N,H_{s,\alpha,\bsgamma}) \\
    \le & \sum_{\emptyset \ne u\subseteq I_s}\gamma_u D_{\alpha,b}^{|u|}\sum_{\hat{\bsl}_u\in (\nat_0^{d}\setminus \{\bszero\})^{|u|}}b^{-2\mu_{\alpha}(\Ecal_d(\hat{\bsl}_u))}\left| \frac{1}{N^2}\sum_{n,n'=0}^{N-1}\wal_{(\Ecal_d(\hat{\bsl}_u), \bszero)}(\bsx_n\ominus \bsx_{n'})\right| .
  \end{align*}

We now set $N=b^m$ and consider a point set $P_{b^m}$ constructed by using a polynomial lattice point set $P_{b^m,ds}(\bsq,p)=\{\bsy_0,\ldots,\bsy_{b^m-1}\}$ according to Definition \ref{def:interlacing_polynomial_lattice}. For $u\subseteq I_{s}$ and $\hat{\bsl}_u\in (\nat_0^d\setminus \{\bszero\})^{|u|}$, we denote by $(\hat{\bsl}_u,\bszero)$ the $ds$-dimensional vector whose $r$-th component is $l_r$ if there exists $j\in u$ such that $r\in \{d(j-1)+1,\ldots,dj\}$ and $0$ otherwise. Then we have
  \begin{align*}
    & \tilde{e}^2(P_{b^m},H_{s,\alpha,\bsgamma}) \\
    \le & \sum_{\emptyset \ne u\subseteq I_s}\gamma_u D_{\alpha,b}^{|u|}\sum_{\hat{\bsl}_u\in (\nat_0^{d}\setminus \{\bszero\})^{|u|}}b^{-2\mu_{\alpha}(\Ecal_d(\hat{\bsl}_u))}\left| \frac{1}{b^{2m}}\sum_{n,n'=0}^{b^m-1}\wal_{\Ecal_d(\hat{\bsl}_u, \bszero)}(\Dcal_d(\bsy_n)\ominus \Dcal_d(\bsy_{n'}))\right| \\
    = & \sum_{\emptyset \ne u\subseteq I_s}\gamma_u D_{\alpha,b}^{|u|}\sum_{\hat{\bsl}_u\in (\nat_0^{d}\setminus \{\bszero\})^{|u|}}b^{-2\mu_{\alpha}(\Ecal_d(\hat{\bsl}_u))} \\
    & \times \left|\frac{1}{b^{2m}}\sum_{n,n'=0}^{b^m-1}\wal_{\Ecal_d(\hat{\bsl}_u, \bszero)}(\Dcal_d(\bsy_n)) \overline{\wal_{\Ecal_d(\hat{\bsl}_u, \bszero)}(\Dcal_d(\bsy_{n'}))}\right| \\
    = & \sum_{\emptyset \ne u\subseteq I_s}\gamma_u D_{\alpha,b}^{|u|}\sum_{\hat{\bsl}_u\in (\nat_0^{d}\setminus \{\bszero\})^{|u|}}b^{-2\mu_{\alpha}(\Ecal_d(\hat{\bsl}_u))} \\
    & \times \left|\frac{1}{b^m}\sum_{n=0}^{b^m-1}\wal_{\Ecal_d(\hat{\bsl}_u, \bszero)}(\Dcal_d(\bsy_n)) \overline{\frac{1}{b^m}\sum_{n'=0}^{b^m-1}\wal_{\Ecal_d(\hat{\bsl}_u, \bszero)}(\Dcal_d(\bsy_{n'}))}\right| \\
    = & \sum_{\emptyset \ne u\subseteq I_s}\gamma_u D_{\alpha,b}^{|u|}\sum_{\substack{\hat{\bsl}_u\in (\nat_0^{d}\setminus \{\bszero\})^{|u|}\\ (\hat{\bsl}_u, \bszero)\in D^{\perp}(\bsq,p)}}b^{-2\mu_{\alpha}(\Ecal_d(\hat{\bsl}_u))} ,
  \end{align*}
where we use the property of Walsh functions in \cite[Proposition~A.6]{DP10} in the first equality and use Lemma \ref{lem:dual_net_interlace} in the third equality. Using the mapping $\phi: I_{ds}\to I_s$, we further have
  \begin{align*}
   \tilde{e}^2(P_{b^m},H_{s,\alpha,\bsgamma}) \le & \sum_{\emptyset \ne u\subseteq I_s}\gamma_u D_{\alpha,b}^{|u|}\sum_{\substack{\emptyset \ne w\subseteq I_{ds}\\ \phi(w)=u}}\sum_{\substack{\bsl_w\in \nat^{|w|}\\ (\bsl_w, \bszero)\in D^{\perp}(\bsq,p)}}b^{-2\mu_{\alpha}(\Ecal_d(\bsl_w, \bszero))} \\
    = & \sum_{\emptyset \ne w\subseteq I_{ds}}\gamma_{\phi(w)} D_{\alpha,b}^{|\phi(w)|}\sum_{\substack{\bsl_w\in \nat^{|w|}\\ (\bsl_w,\bszero)\in D^{\perp}(\bsq,p)}}b^{-2\mu_{\alpha}(\Ecal_d(\bsl_w,\bszero))} ,
  \end{align*}
where we denote by $(\bsl_w,\bszero)$ the $ds$-dimensional vector whose $r$-th component is $l_r$ if $r\in w$ and $0$ otherwise. Finally by using Lemma \ref{lemma:weight}, we have
  \begin{align*}
   & \tilde{e}^2(P_{b^m},H_{s,\alpha,\bsgamma}) \\
   \le & \sum_{\emptyset \ne w\subseteq I_{ds}}\gamma_{\phi(w)} D_{\alpha,b}^{|\phi(w)|}\sum_{\substack{\bsl_w\in \nat^{|w|}\\ (\bsl_w,\bszero)\in D^{\perp}(\bsq,p)}}b^{-2\min(\alpha,d)\sum_{r\in w}\mu_1(l_r)-\alpha|w|+\alpha(2d-1)|\phi(w)|} \\
   = & \sum_{\emptyset \ne w\subseteq I_{ds}}\gamma_{\phi(w)} \tilde{D}_{\alpha,b,d}^{|\phi(w)|}\sum_{\substack{\bsl_w\in \nat^{|w|}\\ (\bsl_w,\bszero)\in D^{\perp}(\bsq,p)}}r_{\alpha,d}(\bsl_w) ,
  \end{align*}
where we write $\tilde{D}_{\alpha,b,d}=b^{(2d-1)\alpha}D_{\alpha,b}$. Hence the result follows.
\end{proof}

\begin{proof}[Proof of Corollary \ref{cor:upper_bound}]
Due to the property of the dual net $D^{\perp}(\bsq,p)$ as in Lemma \ref{lemma:dual_walsh}, we have
  \begin{align}\label{eq:criterion1}
    B_{\alpha,d,\bsgamma}(\bsq,p) & = \sum_{\emptyset \ne w\subseteq I_{ds}}\gamma_{\phi(w)} \tilde{D}_{\alpha,b,d}^{|\phi(w)|}\sum_{\bsl_w\in \nat^{|w|}}r_{\alpha,d}(\bsl_w) \frac{1}{b^m}\sum_{n=0}^{b^m-1}\wal_{(\bsl_w,\bszero)}(\bsy_n) \nonumber \\
    & = \frac{1}{b^m}\sum_{n=0}^{b^m-1}\sum_{\emptyset \ne w\subseteq I_{ds}}\gamma_{\phi(w)} \tilde{D}_{\alpha,b,d}^{|\phi(w)|}\prod_{j\in w}\sum_{l_j=1}^{\infty}r_{\alpha,d}(l_j) \wal_{l_j}(y_{n,j}) .
  \end{align}
From \cite[Section~2.2]{DP05} we obtain
  \begin{align*}
    \sum_{l=1}^{\infty}r_{\alpha,d}(l) \wal_{l}(y) = \chi_{\alpha,d}(y) ,
  \end{align*}
for $y\in [0,1)$. We arrange (\ref{eq:criterion1}) by collecting the terms associated with a given $u\subseteq I_s$ such that $\phi(w)=u$. We then have
  \begin{align*}
    B_{\alpha,d,\bsgamma}(\bsq,p) & = \frac{1}{b^m}\sum_{n=0}^{b^m-1}\sum_{\emptyset \ne u\subseteq I_{s}}\sum_{\substack{\emptyset \ne w\subseteq I_{ds}\\ \phi(w)=u}}\gamma_{\phi(w)} \tilde{D}_{\alpha,b,d}^{|\phi(w)|}\prod_{j\in w}\chi_{\alpha,d}(y_{n,j}) \\
    & = \frac{1}{b^m}\sum_{n=0}^{b^m-1}\sum_{\emptyset \ne u\subseteq I_{s}}\gamma_{u} \tilde{D}_{\alpha,b,d}^{|u|}\sum_{\substack{\emptyset \ne w\subseteq I_{ds}\\ \phi(w)=u}}\prod_{j\in w}\chi_{\alpha,d}(y_{n,j}) \\
    & = \frac{1}{b^m}\sum_{n=0}^{b^m-1}\sum_{\emptyset \ne u\subseteq I_{s}}\gamma_{u} \tilde{D}_{\alpha,b,d}^{|u|}\prod_{j\in u}\; \sum_{\emptyset \ne w\subseteq \{d(j-1)+1,\ldots,dj\}}\prod_{l\in w}\chi_{\alpha,d}(y_{n,l}) \\
    & = \frac{1}{b^m}\sum_{n=0}^{b^m-1}\sum_{\emptyset \ne u\subseteq I_{s}}\gamma_{u} \tilde{D}_{\alpha,b,d}^{|u|}\prod_{j\in u}\left[ -1+\prod_{l=1}^{d}\left( 1+\chi_{\alpha,d}(y_{n,d(j-1)+l})\right) \right] .
  \end{align*}
Hence the result for the first part follows. In case of product weights, we have a more simplified expression of $B_{\alpha,d,\bsgamma}(\bsq,p)$ as
  \begin{align*}
    B_{\alpha,d,\bsgamma}(\bsq,p) & = \frac{1}{b^m}\sum_{n=0}^{b^m-1}\sum_{\emptyset \ne u\subseteq I_{s}}\prod_{j\in u}\gamma_{j} \tilde{D}_{\alpha,b,d}\left[ -1+\prod_{l=1}^d \left( 1+\chi_{\alpha,d}(y_{n,d(j-1)+l})\right) \right] \\
    & = -1+\frac{1}{b^m}\sum_{n=0}^{b^m-1}\prod_{j=1}^{s}\left[1+\gamma_{j} \tilde{D}_{\alpha,b,d}\left[ -1+\prod_{l=1}^d \left( 1+\chi_{\alpha,d}(y_{n,d(j-1)+l})\right) \right] \right].
  \end{align*}
Hence the result for the second part follows.
\end{proof}

\section{Proof of Theorem \ref{theorem:cbc_bound}}\label{app:b}
In the proof of the theorem, we shall use the following inequality that is sometimes referred to as Jensen's inequality. For a sequence $(a_n)_{n\in \nat}$ of non-negative real numbers, we have
	\begin{align*}
		\left( \sum a_n\right)^{\lambda} \le \sum a_n^{\lambda} ,
	\end{align*}
for $0<\lambda\le 1$. We shall also use the following lemma.

\begin{lemma}\label{lem:weight_sum}
Let $b$ be a prime, let $\alpha,d\in \nat$ with $\alpha\ge 2$, and let $\lambda>1/(2\min(\alpha,d))$ be a real number. Let $r_{\alpha,d}:\nat_0 \to \RR$ be given as in Theorem \ref{thm:upper_bound}.
\begin{enumerate}
\item We have
  \begin{align*}
    \sum_{l=1}^{\infty}r_{\alpha,d}^{\lambda}(l)=\frac{b-1}{b^{\lambda \alpha}(b^{2\lambda \min(\alpha,d)}-b)} .
  \end{align*}
\item For $m\in \nat$, we have
  \begin{align*}
    \sum_{\substack{l=1\\ b^m\mid l}}^{\infty}r_{\alpha,d}^{\lambda}(l)=\frac{b-1}{b^{2 \lambda\min(\alpha,d)m+ \lambda\alpha}(b^{2\lambda \min(\alpha,d)}-b)} .
  \end{align*}
\end{enumerate}
\end{lemma}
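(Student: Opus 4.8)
The plan is to evaluate both sums directly by grouping the index $l$ according to the value of $\mu_1(l)$. The key observation is that, by the $\alpha=1$ specialization of the definition in (\ref{eq:dick_weight}), $\mu_1(l)$ equals the number of base-$b$ digits of $l$; equivalently $\mu_1(l)=\lfloor\log_b l\rfloor+1$ for $l\ge 1$, so that for $l\ge 1$ one has $r_{\alpha,d}^{\lambda}(l)=b^{-\lambda\alpha}\,b^{-2\lambda\min(\alpha,d)\mu_1(l)}$.

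For part (1), I would count that there are exactly $b^a-b^{a-1}=b^{a-1}(b-1)$ integers $l$ with $\mu_1(l)=a$ (namely those with $b^{a-1}\le l<b^a$), which turns the sum into
$$\sum_{l=1}^{\infty}r_{\alpha,d}^{\lambda}(l)=\frac{b-1}{b}\,b^{-\lambda\alpha}\sum_{a=1}^{\infty}b^{a(1-2\lambda\min(\alpha,d))}.$$
Here the hypothesis $\lambda>1/(2\min(\alpha,d))$ is exactly what guarantees $1-2\lambda\min(\alpha,d)<0$, so the geometric series converges; summing it and simplifying gives the stated value $\tfrac{b-1}{b^{\lambda\alpha}(b^{2\lambda\min(\alpha,d)}-b)}$.

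For part (2), I would use that every nonzero multiple of $b^m$ has the form $l=b^m l'$ with $l'\in\nat$ in a unique way, and that multiplying $l'$ by $b^m$ shifts all its base-$b$ digits up by $m$ places, so $\mu_1(b^m l')=\mu_1(l')+m$. Hence $r_{\alpha,d}^{\lambda}(b^m l')=b^{-2\lambda\min(\alpha,d)m}\,r_{\alpha,d}^{\lambda}(l')$, and re-indexing the sum over $l'\ge 1$ reduces it to $b^{-2\lambda\min(\alpha,d)m}$ times the sum already evaluated in part (1), which yields the claimed formula. No genuine obstacle is expected here; the only points that require care are checking the convergence condition (which is precisely why the hypothesis on $\lambda$ is imposed) and the elementary fact that $\mu_1$ is additive under multiplication by powers of $b$.
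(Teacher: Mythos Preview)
Your proposal is correct. For part~(1) you do exactly what the paper does: group the positive integers by the value $\mu_1(l)=a$, count $b^{a-1}(b-1)$ integers in each block, and sum the resulting geometric series, noting that the hypothesis $\lambda>1/(2\min(\alpha,d))$ is precisely the convergence condition.

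For part~(2) your route differs slightly from the paper's. The paper repeats the block decomposition of part~(1), observing that the interval $[b^{a-1},b^a)$ contains no multiples of $b^m$ for $a\le m$ and exactly $(b^a-b^{a-1})/b^m$ of them for $a\ge m+1$, and then re-evaluates the shifted geometric series. You instead use the change of variable $l=b^m l'$ together with the identity $\mu_1(b^m l')=\mu_1(l')+m$, which gives $r_{\alpha,d}^{\lambda}(b^m l')=b^{-2\lambda\min(\alpha,d)m}\,r_{\alpha,d}^{\lambda}(l')$ and reduces part~(2) to $b^{-2\lambda\min(\alpha,d)m}$ times the sum already computed in part~(1). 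Both arguments are equally valid; yours is a bit cleaner in that it makes the relation between the two formulas transparent and avoids redoing the geometric-series computation, while the paper's direct count is entirely self-contained and does not rely on the shift property of $\mu_1$.
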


\begin{proof}
Let us consider the first part. From the definition of $r_{\alpha,d}$, we have
  \begin{align*}
    \sum_{l=1}^{\infty}r_{\alpha,d}^{\lambda}(l) = & \sum_{a=1}^{\infty}\sum_{l=b^{a-1}}^{b^a-1}r_{\alpha,d}^{\lambda}(l) \\
    = & \frac{1}{b^{\lambda \alpha}}\sum_{a=1}^{\infty}\sum_{l=b^{a-1}}^{b^a-1}b^{-2\lambda \min(\alpha,d)a} \\
    = & \frac{1}{b^{\lambda \alpha}}\sum_{a=1}^{\infty}(b^a-b^{a-1})b^{-2\lambda \min(\alpha,d)a} \\
    = & \frac{b-1}{b^{\lambda \alpha}(b^{2\lambda \min(\alpha,d)}-b)} .
  \end{align*}

Let us move on to the second part. In a way similar to the above proof of the first part, we have
  \begin{align*}
    \sum_{\substack{l=1\\ b^m\mid l}}^{\infty}r_{\alpha,d}^{\lambda}(l) = & \sum_{a=1}^{\infty}\sum_{\substack{l=b^{a-1}\\ b^m\mid l}}^{b^a-1}r_{\alpha,d}^{\lambda}(l) \\
    = & \frac{1}{b^{\lambda \alpha}}\sum_{a=m+1}^{\infty}\sum_{\substack{l=b^{a-1}\\ b^m\mid l}}^{b^a-1}b^{-2\lambda \min(\alpha,d)a} \\
    = & \frac{1}{b^{\lambda \alpha}}\sum_{a=m+1}^{\infty}\frac{1}{b^m}(b^a-b^{a-1})b^{-2\lambda \min(\alpha,d)a} \\
    = & \frac{b-1}{b^{2 \lambda\min(\alpha,d)m+ \lambda\alpha}(b^{2\lambda \min(\alpha,d)}-b)} .
  \end{align*}
\end{proof}

\begin{proof}[Proof of Theorem \ref{theorem:cbc_bound}]
We prove the theorem by induction. For $r=1$, we have $q_1=1$. By using the second part of Lemma \ref{lem:weight_sum} where we consider $\lambda=1$ here, $B_{\alpha,d, \bsgamma}(q_1,p)$ can be calculated as
  \begin{align*}
    B_{\alpha,d,\bsgamma}(q_1,p) & = \gamma_{\{1\}}\tilde{D}_{\alpha,b,d}\sum_{\substack{l_1=1\\ b^m\mid l_1}}^{\infty}r_{\alpha,d}(l_1) \\
    & = \frac{1}{b^{2\min(\alpha,d)m}}\gamma_{\{1\}}\tilde{D}_{\alpha,b,d}\frac{b-1}{b^{\alpha}(b^{2\min(\alpha,d)}-b)} \\
    & \le \frac{1}{(b^m-1)^\lambda}\left[ \gamma_{\{1\}}^{\lambda}G_{\alpha,d,\lambda,1}\right]^{1/\lambda} ,
  \end{align*}
for $1/(2\min(\alpha,d)) < \lambda \le 1$. Hence the result follows.

Next we suppose that the inequality (\ref{eq:cbc_bound}) holds true for $r\ge 1$, where we write $r=d(j_0-1)+d_0$ such that $j_0,d_0\in \nat$ and $d_0\in \{1,\ldots, d\}$. By writing $r+1=d(j_1-1)+d_1$ such that $j_1,d_1\in \nat$ and $d_1\in \{1,\ldots, d\}$, we obtain
  \begin{align*}
    (j_1,d_1) = \left\{ \begin{array}{ll}
     (j_0+1,1) & \mathrm{if}\  d_0=d, \\
     (j_0,d_0+1) & \mathrm{otherwise} . \\
     \end{array} \right.
  \end{align*}
We now consider
  \begin{align}\label{eq:recursion}
    & B_{\alpha,d,\bsgamma}((\bsq_r,q^*_{r+1}),p) \nonumber \\
    = & \sum_{\emptyset \ne w\subseteq I_{r+1}}\gamma_{\phi(w)} \tilde{D}_{\alpha,b,d}^{|\phi(w)|}\sum_{\substack{\bsl_w\in \nat^{|w|} \\ (\bsl_w,\bszero)\in D^{\perp}((\bsq_r,q^*_{r+1}),p)}}r_{\alpha,d}(\bsl_w) \nonumber \\
    = & \sum_{\emptyset \ne w\subseteq I_{r}}\gamma_{\phi(w)} \tilde{D}_{\alpha,b,d}^{|\phi(w)|}\sum_{\substack{\bsl_w\in \nat^{|w|} \\ (\bsl_w,\bszero)\in D^{\perp}(\bsq_r,p)}}r_{\alpha,d}(\bsl_w) \nonumber \\
    & + \sum_{w\subseteq I_{r}}\gamma_{\phi(w\cup\{r+1\})} \tilde{D}_{\alpha,b,d}^{|\phi(w\cup\{r+1\})|}\sum_{\substack{\bsl_{w\cup \{r+1\}}\in \nat^{|w|+1} \\ (\bsl_{w\cup \{r+1\}},\bszero)\in D^{\perp}((\bsq_r,q^*_{r+1}),p)}}r_{\alpha,d}(\bsl_{w\cup \{r+1\}}) \nonumber \\
    = & B_{\alpha,d,\bsgamma}(\bsq_r,p)+\theta(q^*_{r+1}) ,
  \end{align}
where we define 
  \begin{align*}
    \theta(q^*_{r+1}) := \sum_{w\subseteq I_{r}}\gamma_{\phi(w\cup\{r+1\})} \tilde{D}_{\alpha,b,d}^{|\phi(w\cup\{r+1\})|}\sum_{\substack{\bsl_{w\cup \{r+1\}}\in \nat^{|w|+1} \\ (\bsl_{w\cup \{r+1\}},\bszero)\in D^{\perp}((\bsq_r,q^*_{r+1}),p)}}r_{\alpha,d}(\bsl_{w\cup \{r+1\}}) .
  \end{align*}
In order to minimize $B_{\alpha,d,\bsgamma}((\bsq_r,q^*_{r+1}),p)$ as a function of $q^*_{r+1}$, we only need to consider $\theta(q^*_{r+1})$. Due to an averaging argument, the minimal value of $\theta(q^*_{r+1})$ has to be less than or equal to the average value of $\theta(q^*_{r+1})$ over $q^*_{r+1}\in R_{b,m}$. Let $q_{r+1}\in R_{b,m}$ be a minimizer of $\theta$. Applying Jensen's inequality, we have for $0<\lambda\le 1$
  \begin{align*}
    \theta^{\lambda}(q_{r+1}) = & \min_{q^{*}_{r+1}\in R_{b,m}}\theta^{\lambda}(q^*_{r+1}) \\
    \le & \frac{1}{b^m-1}\sum_{q^*_{r+1}\in R_{b,m}}\theta^{\lambda}(q^*_{r+1}) \\
    \le & \frac{1}{b^m-1}\sum_{q^*_{r+1}\in R_{b,m}}\sum_{w\subseteq I_{r}}\gamma_{\phi(w\cup\{r+1\})}^{\lambda} \tilde{D}_{\alpha,b,d}^{\lambda |\phi(w\cup\{r+1\})|} \\
    & \times \sum_{\substack{\bsl_{w\cup\{r+1\}}\in \nat^{|w|+1} \\ (\bsl_{w\cup\{r+1\}},\bszero)\in D^{\perp}((\bsq_r,q^*_{r+1}),p)}}r_{\alpha,d}^{\lambda}(\bsl_{w\cup\{r+1\}}) \\
    = &  \sum_{w\subseteq I_{r}}\gamma_{\phi(w\cup\{r+1\})}^{\lambda} \tilde{D}_{\alpha,b,d}^{\lambda |\phi(w\cup\{r+1\})|} \\
    & \times \frac{1}{b^m-1}\sum_{q^*_{r+1}\in R_{b,m}}\sum_{\substack{\bsl_{w\cup\{r+1\}}\in \nat^{|w|+1} \\ (\bsl_{w\cup\{r+1\}},\bszero)\in D^{\perp}((\bsq_r,q^*_{r+1}),p)}}r_{\alpha,d}^{\lambda}(\bsl_{w\cup\{r+1\}}) .
  \end{align*}
For $w\subseteq I_r$, we have the following on the inner double sum in the last expression. If $l_{r+1}$ is a multiple of $b^m$, we always have $\rtr_m(l_{r+1})=0$ and the condition $(\bsl_{w\cup\{r+1\}},\bszero)\in D^{\perp}((\bsq_r,q^*_{r+1}),p)$ reduces to the equation $\rtr_m(\bsl_w)\cdot \bsq_w=0 \pmod p$. Otherwise if $l_{r+1}$ is not a multiple of $b^m$, we have $\rtr_m(l_{r+1})\ne 0$ and $\rtr_m(l_{r+1})q^*_{r+1}$ cannot be a multiple of $p$ by considering that $p$ is irreducible. Hence we have
  \begin{align}
    & \frac{1}{b^m-1}\sum_{q^*_{r+1}\in R_{b,m}}\sum_{\substack{\bsl_{w\cup\{r+1\}}\in \nat^{|w|+1} \\ (\bsl_{w\cup\{r+1\}},\bszero)\in D^{\perp}((\bsq_r,q^*_{r+1}),p)}}r_{\alpha,d}^{\lambda}(\bsl_{w\cup\{r+1\}}) \nonumber \\
    = & \sum_{\substack{l_{r+1}=1\\ b^m\mid l_{r+1}}}^{\infty}r_{\alpha,d}^{\lambda}(l_{r+1})\sum_{\substack{\bsl_w\in \nat^{|w|}\\ \rtr_m(\bsl_w)\cdot \bsq_w=0 \pmod p}}r_{\alpha,d}^{\lambda}(\bsl_w) \label{eq:cbc_proof1}\\
    & + \frac{1}{b^m-1}\sum_{\substack{l_{r+1}=1\\ b^m\nmid l_{r+1}}}^{\infty}r_{\alpha,d}^{\lambda}(l_{r+1})\sum_{\substack{\bsl_w\in \nat^{|w|}\\ \rtr_m(\bsl_w)\cdot \bsq_w\ne 0 \pmod p}}r_{\alpha,d}^{\lambda}(\bsl_w) \label{eq:cbc_proof2}.
  \end{align}
Here we apply the second and first parts of Lemma \ref{lem:weight_sum} to (\ref{eq:cbc_proof1}) and (\ref{eq:cbc_proof2}), respectively, to obtain
  \begin{align*}
    \sum_{\substack{l_{r+1}=1\\ b^m\mid l_{r+1}}}^{\infty}r_{\alpha,d}^{\lambda}(l_{r+1})=\frac{b-1}{b^{2\lambda\min(\alpha,d)m+\lambda\alpha}(b^{2\lambda\min(\alpha,d)}-b)} \le \frac{C_{\alpha,d,\lambda}}{b^{2\lambda\min(\alpha,d)m}} ,
  \end{align*}
and
  \begin{align*}
   \sum_{\substack{l_{r+1}=1\\ b^m\nmid l_{r+1}}}^{\infty}r_{\alpha,d}^{\lambda}(l_{r+1}) \le & \sum_{l_{r+1}=1}^{\infty}r_{\alpha,d}^{\lambda}(l_{r+1}) \\
   = & \frac{b-1}{b^{\lambda\alpha}(b^{2\lambda\min(\alpha,d)}-b)} \\
   \le & C_{\alpha,d,\lambda} ,
  \end{align*}
for $1/(2\min(\alpha,d)) < \lambda \le 1$. From these inequalities, we have 
  \begin{align*}
    & \frac{1}{b^m-1}\sum_{q^*_{r+1}\in R_{b,m}}\sum_{\substack{\bsl_{w\cup\{r+1\}}\in \nat^{|w|+1} \\ (\bsl_{w\cup\{r+1\}},\bszero)\in D^{\perp}((\bsq_r,q^*_{r+1}),p)}}r_{\alpha,d}^{\lambda}(\bsl_{w\cup\{r+1\}}) \\
    \le & \frac{C_{\alpha,d,\lambda}}{b^{2\lambda\min(\alpha,d)m}}\sum_{\substack{\bsl_w\in \nat^{|w|}\\ \rtr_m(\bsl_w)\cdot \bsq_w=0 \pmod p}}r_{\alpha,d}^{\lambda}(\bsl_w)+\frac{C_{\alpha,d,\lambda}}{b^m-1}\sum_{\substack{\bsl_w\in \nat^{|w|}\\ \rtr_m(\bsl_w)\cdot \bsq_w\ne 0 \pmod p}}r_{\alpha,d}^{\lambda}(\bsl_w) \\
    \le & \frac{C_{\alpha,d,\lambda}}{b^m-1}\sum_{\bsl_w\in \nat^{|w|}}r_{\alpha,d}^{\lambda}(\bsl_w) = \frac{C_{\alpha,d,\lambda}}{b^m-1}\left[\sum_{l=1}^{\infty}r_{\alpha,d}^{\lambda}(l)\right]^{|w|} \le \frac{C_{\alpha,d,\lambda}^{|w|+1}}{b^m-1} ,
  \end{align*}
where we use the first part of Lemma \ref{lem:weight_sum} again in the last inequality. Thus, we have a bound on $\theta^{\lambda}(q_{r+1})$ as
  \begin{align}\label{eq:bound_theta}
    \theta^{\lambda}(q_{r+1}) \le \frac{1}{b^m-1}\sum_{w\subseteq I_{r}}\gamma_{\phi(w\cup\{r+1\})}^{\lambda} \tilde{D}_{\alpha,b,d}^{\lambda |\phi(w\cup\{r+1\})|}C_{\alpha,d,\lambda}^{|w|+1} .
  \end{align}

We arrange the sum on the right-hand side of (\ref{eq:bound_theta}). We recall that $r=d(j_0-1)+d_0=d(j_1-1)+d_1-1$. Let us define two disjoint subsets $J_1:= \{1,\ldots, d(j_1-1) \}$ and $J_2:= \{d(j_1-1)+1,\ldots, d(j_1-1)+d_1-1 \}$. Here $J_1$ is empty for the case $j_1=1$, and $J_2$ is empty for the case $d_1=1$. We have $J_1\cup J_2=\{1,\ldots,d(j_1-1)+d_1-1\}=I_r$. For a subset $w\subseteq I_r$, we write $w_1=w\cap J_1$ and $w_2=w\cap J_2$. Then we have $w_1\cup w_2=w$ and $w_1\cap w_2=\emptyset$. Furthermore, we have $\phi(w\cup\{r+1\})=\phi(w_1)\cup\{j_1\}$. Through this argument, we have
  \begin{align}\label{eq:arrange1}
    & \sum_{w\subseteq I_{r}}\gamma_{\phi(w\cup\{r+1\})}^{\lambda} \tilde{D}_{\alpha,b,d}^{\lambda |\phi(w\cup\{r+1\})|}C_{\alpha,d,\lambda}^{|w|+1} \nonumber \\
    = & \sum_{w_1\subseteq J_1}\sum_{w_2\subseteq J_2}\gamma_{\phi(w_1)\cup \{j_1\}}\tilde{D}_{\alpha,b,d}^{\lambda |\phi(w_1)\cup\{j_1\}|}C_{\alpha,d,\lambda}^{|w_1|+|w_2|+1} \nonumber \\
    = & \left( \sum_{w_1\subseteq J_1}\gamma_{\phi(w_1)\cup \{j_1\}}\tilde{D}_{\alpha,b,d}^{\lambda |\phi(w_1)\cup\{j_1\}|}C_{\alpha,d,\lambda}^{|w_1|}\right) \left( \sum_{w_2\subseteq J_2}C_{\alpha,d,\lambda}^{|w_2|+1}\right) .
  \end{align}
We further arrange the first sum of (\ref{eq:arrange1}) by collecting the terms such that $\phi(w_1)=u$ for $u\subseteq I_{j_1-1}$. For such terms, at least one element from $\{d(j-1)+1,\ldots,dj\}$ for every $j\in u$ must be included. Thus we have
  \begin{align*}
    & \sum_{w_1\subseteq J_1}\gamma_{\phi(w_1)\cup \{j_1\}}\tilde{D}_{\alpha,b,d}^{\lambda |\phi(w_1)\cup\{j_1\}|}C_{\alpha,d,\lambda}^{|w_1|} \\
    = & \sum_{u\subseteq I_{j_1-1}}\gamma_{u\cup \{j_1\}}\tilde{D}_{\alpha,b,d}^{\lambda (|u|+1)}\sum_{\substack{ w_1\subseteq J_1 \\ \phi(w_1)=u}}C_{\alpha,d,\lambda}^{|w_1|} \\
    = & \sum_{u\subseteq I_{j_1-1}}\gamma_{u\cup \{j_1\}}\tilde{D}_{\alpha,b,d}^{\lambda (|u|+1)}\prod_{j\in u}\; \sum_{\emptyset \ne v_j\subseteq \{d(j-1)+1,\ldots,d(j-1)+d\}}C_{\alpha,d,\lambda}^{|v_j|} \\
    = & \sum_{u\subseteq I_{j_1-1}}\gamma_{u\cup \{j_1\}}\tilde{D}_{\alpha,b,d}^{\lambda (|u|+1)}\left[ -1+(1+C_{\alpha,d,\lambda})^d \right]^{|u|} \\
    = & \tilde{D}_{\alpha,b,d}^{\lambda}\sum_{u\subseteq I_{j_1-1}}\gamma_{u\cup \{j_1\}}G_{\alpha,d,\lambda,d}^{|u|} .
  \end{align*}
For the second sum of (\ref{eq:arrange1}) we have
  \begin{align*}
    \sum_{w_2\subseteq J_2}C_{\alpha,d,\lambda}^{|w_2|+1} = C_{\alpha,d,\lambda}(1+C_{\alpha,d,\lambda})^{d_1-1} = (1+C_{\alpha,d,\lambda})^{d_1}-(1+C_{\alpha,d,\lambda})^{d_1-1} .
  \end{align*}
By substituting these results into (\ref{eq:arrange1}), we obtain
  \begin{align*}
    & \sum_{w\subseteq I_{r}}\gamma_{\phi(w\cup\{r+1\})}^{\lambda} \tilde{D}_{\alpha,b,d}^{\lambda |\phi(w\cup\{r+1\})|}C_{\alpha,d,\lambda}^{|w|+1} \\
    = & (G_{\alpha,d,\lambda,d_1}-G_{\alpha,d,\lambda,d_1-1})\sum_{u\subseteq I_{j_1-1}}\gamma_{u\cup \{j_1\}}G_{\alpha,d,\lambda,d}^{|u|} .
  \end{align*}
From (\ref{eq:bound_theta}) we obtain a bound on $\theta^{\lambda}(q_{r+1})$. Finally, by applying Jensen's inequality to (\ref{eq:recursion}), we obtain
  \begin{align*}
    B_{\alpha,d,\bsgamma}^\lambda(\bsq_{r+1},p) = & \left( B_{\alpha,d,\bsgamma}(\bsq_r ,p)+\theta(q_{r+1})\right)^\lambda \\
    \le & B_{\alpha,d,\bsgamma}^{\lambda}(\bsq_r ,p) + \theta^{\lambda}(q_{r+1}) \\
    \le & \frac{1}{b^m-1}\left[ \sum_{\emptyset \ne u \subseteq I_{j_0-1}}\gamma_u^{\lambda}G_{\alpha,d,\lambda,d}^{|u|}+G_{\alpha,d,\lambda,d_0}\sum_{u \subseteq I_{j_0-1}}\gamma_{u\cup \{j_0\}}^{\lambda}G_{\alpha,d,\lambda,d}^{|u|} \right] \\
    & + \frac{1}{b^m-1}(G_{\alpha,d,\lambda,d_1}-G_{\alpha,d,\lambda,d_1-1})\sum_{u\subseteq I_{j_1-1}}\gamma_{u\cup \{j_1\}}G_{\alpha,d,\lambda,d}^{|u|} \\
    = & \frac{1}{b^m-1}\left[ \sum_{\emptyset \ne u \subseteq I_{j_1-1}}\gamma_u^{\lambda}G_{\alpha,d,\lambda,d}^{|u|} +G_{\alpha,d,\lambda,d_1}\sum_{u\subseteq I_{j_1-1}}\gamma_{u\cup \{j_1\}}G_{\alpha,d,\lambda,d}^{|u|}\right] .
  \end{align*}
Hence the result follows.
\end{proof}


\clearpage
\begin{table}
\begin{center}
{\scriptsize
\caption{Values of $B_{\alpha,d,\bsgamma}(\bsq,p)$ and $B_{\alpha,d,\bsgamma}(C_1,\ldots,C_{ds})$ for $\gamma_j=1$, $1\le j\le s$ and $(\alpha,d)=(2,2)$ with three choices of $s=1,2,5$.}
\begin{tabular}{l||ll|lll|lll}
\hline
$m$ & \multicolumn{2}{|c|}{$s=1$} & \multicolumn{3}{|c|}{$s=2$} & \multicolumn{3}{|c}{$s=5$} \\ \hline
  & Sobol' & PLPS & Sobol' & N-X & PLPS & Sobol' & N-X & PLPS \\ \hline
4	& 2.13e-5	& 2.11e-5	& 2.69e-3	& 2.56e-3	& 2.70e-3	& 1.27e+0	& 1.76e+0	& 9.81e-1	\\
5	& 1.51e-6	& 1.42e-6	& 1.06e-3	& 2.62e-4	& 3.05e-4	& 4.05e-1	& 8.52e-1	& 2.91e-1	\\
6	& 1.38e-7	& 9.56e-8	& 9.51e-5	& 2.04e-5	& 7.58e-5	& 1.84e-1	& 1.50e-1	& 7.42e-2	\\
7	& 2.77e-8	& 6.38e-9	& 3.27e-6	& 1.61e-6	& 6.94e-6	& 4.37e-2	& 6.13e-2	& 2.59e-2	\\
8	& 1.11e-8	& 4.24e-10	& 4.34e-7	& 1.74e-7	& 4.82e-7	& 2.15e-2	& 1.84e-2	& 6.55e-3	\\
9	& 5.26e-9	& 2.81e-11	& 4.92e-8	& 3.08e-8	& 8.09e-8	& 1.28e-2	& 5.97e-3	& 1.94e-3	\\
10	& 2.62e-9	& 1.86e-12	& 1.32e-8	& 1.11e-8	& 5.78e-9	& 9.43e-4	& 4.54e-3	& 3.97e-4	\\
11	& 1.31e-9	& 1.24e-13	& 5.59e-9	& 5.27e-9	& 5.39e-10	& 4.59e-4	& 3.74e-3	& 7.42e-5	\\
12	& 6.55e-10	& 6.44e-15	& 2.57e-9	& 2.61e-9	& 4.64e-11	& 1.13e-4	& 5.36e-5	& 1.82e-5	\\
13	& 3.28e-10	& 4.44e-16	& 1.28e-9	& 1.30e-9	& 4.85e-12	& 8.07e-5	& 1.05e-5	& 4.32e-6	\\
14	& 1.64e-10	& $< 10^{-16}$	& 6.37e-10	& 6.52e-10	& 3.99e-13	& 1.38e-5	& 1.62e-6	& 7.18e-7	\\
15	& 8.19e-11	& $< 10^{-16}$	& 3.19e-10	& 3.19e-10	& 4.35e-14	& 7.52e-7	& 2.39e-7	& 1.35e-7	\\
\hline
\end{tabular}
\label{tb:1}}
\end{center}
\end{table}

\begin{table}
\begin{center}
{\scriptsize
\caption{Values of $B_{\alpha,d,\bsgamma}(\bsq,p)$ and $B_{\alpha,d,\bsgamma}(C_1,\ldots,C_{ds})$ for $\gamma_j=1$, $1\le j\le s$ and $s=3$ with two choices of $(\alpha,d)=(2,2),(3,3)$.}
\begin{tabular}{l||lll|lll}
\hline
$m$ & \multicolumn{3}{|c|}{$(\alpha,d)=(2,2)$} & \multicolumn{3}{|c}{$(\alpha,d)=(3,3)$} \\ \hline
  & Sobol' & N-X & PLPS & Sobol' & N-X & PLPS \\ \hline
4	& 7.16e-2	& 2.08e-1	& 4.77e-2	& 1.86e+3	& 1.79e+3	& 1.14e+2	\\
5	& 3.06e-2	& 1.33e-2	& 8.05e-3	& 6.11e+2	& 7.22e+2	& 1.87e+1	\\
6	& 6.18e-3	& 2.92e-3	& 1.90e-3	& 2.58e+2	& 4.06e+2	& 1.14e+1	\\
7	& 9.08e-4	& 1.30e-3	& 2.79e-4	& 2.07e+1	& 1.01e+2	& 1.35e+0	\\
8	& 2.42e-4	& 3.74e-4	& 6.02e-5	& 3.55e+0	& 9.83e+1	& 1.34e-1	\\
9	& 8.86e-6	& 4.40e-6	& 7.53e-6	& 1.80e+0	& 1.65e+0	& 1.74e-2	\\
10	& 1.58e-6	& 7.80e-7	& 9.00e-7	& 2.17e-1	& 1.59e+0	& 2.29e-3	\\
11	& 1.20e-6	& 1.66e-7	& 1.45e-7	& 1.77e-2	& 8.70e-3	& 1.34e-4	\\
12	& 6.41e-8	& 1.69e-8	& 1.61e-8	& 4.04e-3	& 3.50e-3	& 8.42e-6	\\
13	& 7.57e-9	& 4.92e-9	& 3.08e-9	& 1.97e-3	& 1.97e-3	& 8.32e-7	\\
14	& 2.43e-9	& 2.06e-9	& 2.37e-10	& 9.82e-4	& 1.13e-3	& 5.14e-8	\\
15	& 1.20e-9	& 9.80e-10	& 3.18e-11	& 4.91e-4	& 4.79e-4	& 2.75e-9	\\
\hline
\end{tabular}
\label{tb:2}}
\end{center}
\end{table}

\begin{table}
\begin{center}
{\scriptsize
\caption{Values of $B_{\alpha,d,\bsgamma}(\bsq,p)$ and $B_{\alpha,d,\bsgamma}(C_1,\ldots,C_{ds})$ for $\gamma_j=j^{-2}$, $1\le j\le s$ and  $(\alpha,d)=(2,2)$ with three choices of $s=1,2,5$.}
\begin{tabular}{l||ll|lll|lll}
\hline
$m$ & \multicolumn{2}{|c|}{$s=1$} & \multicolumn{3}{|c|}{$s=2$} & \multicolumn{3}{|c}{$s=5$} \\ \hline
  & Sobol' & PLPS & Sobol' & N-X & PLPS & Sobol' & N-X & PLPS \\ \hline
4	& 2.13e-5	& 2.11e-5	& 6.89e-4	& 7.34e-4	& 6.91e-4	& 2.78e-2	& 1.31e-1	& 6.67e-3	\\
5	& 1.51e-6	& 1.42e-6	& 2.66e-4	& 6.76e-5	& 7.72e-5	& 3.33e-3	& 1.04e-1	& 1.38e-3	\\
6	& 1.38e-7	& 9.56e-8	& 2.39e-5	& 5.26e-6	& 1.90e-5	& 6.07e-4	& 5.56e-4	& 3.16e-4	\\
7	& 2.77e-8	& 6.38e-9	& 8.38e-7	& 4.30e-7	& 1.74e-6	& 1.61e-4	& 1.32e-4	& 6.41e-5	\\
8	& 1.11e-8	& 4.24e-10	& 1.17e-7	& 5.71e-8	& 1.21e-7	& 7.98e-5	& 4.98e-5	& 1.46e-5	\\
9	& 5.26e-9	& 2.81e-11	& 1.62e-8	& 1.37e-8	& 2.02e-8	& 1.94e-5	& 1.41e-5	& 2.35e-6	\\
10	& 2.62e-9	& 1.86e-12	& 5.27e-9	& 4.74e-9	& 1.45e-9	& 2.27e-6	& 3.53e-6	& 5.09e-7	\\
11	& 1.31e-9	& 1.24e-13	& 2.38e-9	& 2.30e-9	& 1.35e-10	& 1.44e-6	& 2.30e-6	& 6.98e-8	\\
12	& 6.55e-10	& 6.44e-15	& 1.13e-9	& 1.14e-9	& 1.16e-11	& 5.39e-8	& 1.06e-7	& 1.70e-8	\\
13	& 3.28e-10	& 4.44e-16	& 5.65e-10	& 6.91e-10	& 1.21e-12	& 3.38e-8	& 1.63e-8	& 2.69e-9	\\
14	& 1.64e-10	& $< 10^{-16}$	& 2.82e-10	& 3.45e-10	& 9.97e-14	& 4.33e-9	& 1.57e-9	& 3.92e-10	\\
15	& 8.19e-11	& $< 10^{-16}$	& 1.41e-10	& 1.41e-10	& 1.09e-14	& 1.06e-9	& 3.91e-10	& 7.29e-11	\\
\hline
\end{tabular}
\label{tb:3}}
\end{center}
\end{table}

\begin{table}
\begin{center}
{\scriptsize
\caption{Values of $B_{\alpha,d,\bsgamma}(\bsq,p)$ and $B_{\alpha,d,\bsgamma}(C_1,\ldots,C_{ds})$ for $\gamma_j=j^{-2}$, $1\le j\le s$ and $s=3$ with two choices of $(\alpha,d)=(2,2),(3,3)$.}
\begin{tabular}{l||lll|lll}
\hline
$m$ & \multicolumn{3}{|c|}{$(\alpha,d)=(2,2)$} & \multicolumn{3}{|c}{$(\alpha,d)=(3,3)$} \\ \hline
  & Sobol' & N-X & PLPS & Sobol' & N-X & PLPS \\ \hline
4	& 5.49e-3	& 2.21e-2	& 2.38e-3	& 1.12e+2	& 1.31e+2	& 6.13e+0	\\
5	& 2.03e-3	& 9.73e-4	& 4.25e-4	& 1.86e+1	& 2.42e+1	& 6.03e-1	\\
6	& 2.00e-4	& 2.84e-4	& 9.00e-5	& 7.21e+0	& 1.13e+1	& 3.72e-1	\\
7	& 2.84e-5	& 3.93e-5	& 1.37e-5	& 5.82e-1	& 2.81e+0	& 5.32e-2	\\
8	& 6.95e-6	& 1.14e-5	& 2.21e-6	& 1.03e-1	& 2.73e+0	& 4.58e-3	\\
9	& 2.87e-7	& 2.64e-7	& 2.53e-7	& 5.10e-2	& 4.60e-2	& 5.02e-4	\\
10	& 5.04e-8	& 7.85e-8	& 3.22e-8	& 6.09e-3	& 4.41e-2	& 7.55e-5	\\
11	& 3.61e-8	& 8.29e-9	& 4.35e-9	& 5.20e-4	& 2.67e-4	& 3.98e-6	\\
12	& 3.01e-9	& 1.93e-9	& 5.93e-10	& 1.26e-4	& 1.10e-4	& 2.38e-7	\\
13	& 8.19e-10	& 8.93e-10	& 9.78e-11	& 6.17e-5	& 6.10e-5	& 2.32e-8	\\
14	& 3.72e-10	& 3.79e-10	& 7.46e-12	& 3.08e-5	& 3.39e-5	& 2.02e-9	\\
15	& 1.85e-10	& 2.27e-10	& 1.14e-12	& 1.54e-5	& 1.52e-5	& 1.05e-10	\\
\hline
\end{tabular}
\label{tb:4}}
\end{center}
\end{table}

\begin{table}
\begin{center}
{\scriptsize
\caption{Values of $B_{\alpha,d,\bsgamma}(\bsq,p)$ and $B_{\alpha,d,\bsgamma}(C_1,\ldots,C_{ds})$ for $\gamma_j=1$, $1\le j\le s$ and $(\alpha,d)=(2,2)$ with three choices of $s=10,20,50$.}
\begin{tabular}{l||ll|ll|ll}
\hline
$m$ & \multicolumn{2}{|c|}{$s=10$} & \multicolumn{2}{|c|}{$s=20$} & \multicolumn{2}{|c}{$s=50$} \\ \hline
  & Sobol' & PLPS & Sobol' & PLPS & Sobol' & PLPS \\ \hline
4	& 4.76e+1	& 4.74e+1	& 3.75e+4	& 3.75e+4	& 1.74e+13	& 1.74e+13	\\
5	& 2.33e+1	& 2.32e+1	& 1.87e+4	& 1.87e+4	& 8.70e+12	& 8.70e+12	\\
6	& 1.14e+1	& 1.12e+1	& 9.37e+3	& 9.37e+3	& 4.35e+12	& 4.35e+12	\\
7	& 5.63e+0	& 5.29e+0	& 4.68e+3	& 4.68e+3	& 2.17e+12	& 2.17e+12	\\
8	& 2.78e+0	& 2.41e+0	& 2.34e+3	& 2.34e+3	& 1.09e+12	& 1.09e+12	\\
9	& 1.47e+0	& 1.03e+0	& 1.17e+3	& 1.17e+3	& 5.44e+11	& 5.44e+11	\\
10	& 7.30e-1	& 4.07e-1	& 5.86e+2	& 5.85e+2	& 2.72e+11	& 2.72e+11	\\
11	& 3.94e-1	& 1.78e-1	& 2.93e+2	& 2.92e+2	& 1.36e+11	& 1.36e+11	\\
12	& 1.70e-1	& 6.65e-2	& 1.47e+2	& 1.46e+2	& 6.80e+10	& 6.79e+10	\\
13	& 1.09e-1	& 2.59e-2	& 7.35e+1	& 7.25e+1	& 3.40e+10	& 3.40e+10	\\
14	& 5.26e-2	& 9.49e-3	& 3.67e+1	& 3.61e+1	& 1.70e+10	& 1.70e+10 \\
15	& 3.52e-2	& 3.37e-3	& 1.84e+1	& 1.79e+1	& 8.49e+9	& 8.49e+9 \\
\hline                                                               
\end{tabular}
\label{tb:5}}
\end{center}
\end{table}

\begin{table}
\begin{center}
{\scriptsize
\caption{Values of $B_{\alpha,d,\bsgamma}(\bsq,p)$ and $B_{\alpha,d,\bsgamma}(C_1,\ldots,C_{ds})$ for $\gamma_j=j^{-2}$, $1\le j\le s$ and  $(\alpha,d)=(2,2)$ with three choices of $s=10,20,50$.}
\begin{tabular}{l||ll|ll|ll}
\hline
$m$ & \multicolumn{2}{|c|}{$s=10$} & \multicolumn{2}{|c|}{$s=20$} & \multicolumn{2}{|c}{$s=50$} \\ \hline
  & Sobol' & PLPS & Sobol' & PLPS & Sobol' & PLPS \\ \hline
4	& 3.85e-2	& 1.29e-2	& 4.70e-2	& 1.72e-2	& 5.21e-2	& 2.01e-2	\\
5	& 7.91e-3	& 3.27e-3	& 1.30e-2	& 4.85e-3	& 1.54e-2	& 6.00e-3	\\
6	& 1.24e-3	& 8.65e-4	& 2.56e-3	& 1.41e-3	& 3.93e-3	& 1.85e-3	\\
7	& 5.01e-4	& 2.11e-4	& 8.33e-4	& 3.87e-4	& 1.57e-3	& 5.55e-4	\\
8	& 2.31e-4	& 5.41e-5	& 3.57e-4	& 1.04e-4	& 6.27e-4	& 1.60e-4	\\
9	& 8.81e-5	& 1.21e-5	& 1.56e-4	& 2.72e-5	& 2.06e-4	& 4.44e-5	\\
10	& 2.58e-5	& 3.08e-6	& 6.48e-5	& 7.00e-6	& 9.13e-5	& 1.20e-5	\\
11	& 9.69e-6	& 6.20e-7	& 1.87e-5	& 1.73e-6	& 2.75e-5	& 3.25e-6	\\
12	& 1.67e-6	& 1.60e-7	& 4.80e-6	& 4.73e-7	& 8.46e-6	& 9.10e-7	\\
13	& 1.20e-6	& 3.61e-8	& 3.10e-6	& 1.24e-7	& 4.64e-6	& 2.60e-7	\\
14	& 2.61e-7	& 7.96e-9	& 1.67e-6	& 3.11e-8	& 2.61e-6	& 7.20e-8 \\
15	& 1.73e-7	& 1.76e-9	& 1.40e-6	& 8.11e-9	& 1.73e-6	& 2.01e-8 \\
\hline
\end{tabular}
\label{tb:6}}
\end{center}
\end{table}

\begin{table}
\begin{center}
{\scriptsize
\caption{Values of $\mathrm{rmse}(f;P_{2^m})$ for our constructed point sets, the competitors and interlaced scrambled polynomial lattice point sets with two choices of $s=1,2$.}
\begin{tabular}{l||lll|llll}
\hline
$m$ & \multicolumn{3}{|c|}{$s=1$} & \multicolumn{4}{|c}{$s=2$}  \\ \hline
  & Sobol' & PLPS-sc & PLPS & Sobol' & N-X & PLPS-sc & PLPS \\ \hline
4	& 1.72e-4	& 4.53e-5	& 8.09e-5	& 1.86e-4	& 1.05e-4	& 1.11e-4	& 1.29e-4	\\
5	& 4.31e-5	& 9.38e-6	& 2.09e-5	& 3.86e-5	& 2.15e-5	& 2.45e-5	& 2.99e-5	\\
6	& 1.02e-5	& 1.50e-6	& 5.90e-6	& 1.01e-5	& 1.09e-5	& 8.51e-6	& 6.70e-6	\\
7	& 2.92e-6	& 2.91e-7	& 1.30e-6	& 2.34e-6	& 1.39e-6	& 1.31e-6	& 1.20e-6	\\
8	& 6.82e-7	& 5.08e-8	& 3.39e-7	& 5.57e-7	& 4.69e-7	& 5.07e-7	& 4.42e-7	\\
9	& 1.59e-7	& 1.31e-8	& 8.50e-8	& 1.42e-7	& 6.85e-8	& 1.84e-7	& 1.92e-7	\\
10	& 3.77e-8	& 2.25e-9	& 2.07e-8	& 4.54e-8	& 1.96e-8	& 2.86e-8	& 1.90e-8	\\
11	& 1.00e-8	& 2.96e-10	& 5.03e-9	& 9.73e-9	& 5.02e-9	& 1.32e-8	& 6.39e-9	\\
12	& 2.37e-9	& 6.35e-11	& 1.23e-9	& 2.19e-9	& 9.90e-10	& 1.31e-9	& 1.03e-9	\\
13	& 5.97e-10	& 1.22e-11	& 3.10e-10	& 4.94e-10	& 3.03e-10	& 5.51e-10	& 2.75e-10	\\
14	& 1.60e-10	& 2.71e-12	& 7.32e-11	& 1.27e-10	& 1.11e-10	& 9.96e-11	& 6.82e-11	\\
15	& 3.43e-11	& 4.75e-13	& 1.81e-11	& 4.39e-11	& 1.54e-11	& 5.32e-11	& 2.00e-11	\\
\hline
\end{tabular}
\label{tb:7}}
\end{center}
\end{table}

\begin{table}
\begin{center}
{\scriptsize
\caption{Values of $\mathrm{rmse}(f;P_{2^m})$ for our constructed point sets, the competitors and interlaced scrambled polynomial lattice point sets with two choices of $s=5,10$.}
\begin{tabular}{l||llll|lll}
\hline
$m$ & \multicolumn{4}{|c|}{$s=5$} & \multicolumn{3}{|c}{$s=10$} \\ \hline
  & Sobol' & N-X & PLPS-sc & PLPS & Sobol' & PLPS-sc & PLPS \\ \hline
4	& 3.15e-4	& 1.63e-3	& 1.10e-4	& 1.29e-4	& 2.71e-4	& 9.61e-5	& 9.88e-5	\\
5	& 3.84e-5	& 1.63e-3	& 3.04e-5	& 3.18e-5	& 6.03e-5	& 2.72e-5	& 3.42e-5	\\
6	& 1.80e-5	& 2.16e-5	& 1.21e-5	& 1.05e-5	& 1.30e-5	& 1.14e-5	& 1.15e-5	\\
7	& 9.39e-6	& 7.26e-6	& 5.59e-6	& 3.25e-6	& 9.94e-6	& 4.46e-6	& 3.94e-6	\\
8	& 5.19e-6	& 1.38e-6	& 1.55e-6	& 1.24e-6	& 4.94e-6	& 1.27e-6	& 1.12e-6	\\
9	& 7.33e-7	& 3.79e-7	& 3.79e-7	& 2.67e-7	& 7.17e-7	& 3.74e-7	& 3.44e-7	\\
10	& 1.77e-7	& 2.73e-7	& 1.25e-7	& 6.59e-8	& 2.03e-7	& 2.16e-7	& 2.17e-7	\\
11	& 1.60e-7	& 6.72e-8	& 3.12e-8	& 6.52e-8	& 1.62e-7	& 4.21e-8	& 5.75e-8	\\
12	& 1.44e-8	& 3.65e-8	& 1.49e-8	& 9.92e-9	& 3.86e-8	& 2.56e-8	& 9.99e-9	\\
13	& 7.94e-9	& 3.94e-9	& 5.04e-9	& 4.02e-9	& 1.84e-8	& 7.38e-9	& 4.75e-9	\\
14	& 6.06e-10	& 8.13e-10	& 1.16e-9	& 8.30e-10	& 2.31e-9	& 2.20e-9	& 1.34e-9	\\
15	& 1.61e-10	& 1.84e-10	& 3.59e-10	& 1.42e-10	& 1.55e-9	& 9.91e-10	& 1.50e-9	\\
\hline
\end{tabular}
\label{tb:8}}
\end{center}
\end{table}

\begin{table}
\begin{center}
{\scriptsize
\caption{Values of $\mathrm{rmse}(f;P_{2^m})$ for our constructed point sets, the competitors and interlaced scrambled polynomial lattice point sets with two choices of $s=20,50$.}
\begin{tabular}{l||lll|lll}
\hline
$m$ & \multicolumn{3}{|c|}{$s=20$} & \multicolumn{3}{|c}{$s=50$} \\ \hline
  & Sobol' & PLPS-sc & PLPS & Sobol' & PLPS-sc & PLPS \\ \hline
4	& 2.59e-4	& 8.09e-5	& 1.12e-4	& 2.27e-4	& 9.40e-5	& 9.96e-5	\\
5	& 5.75e-5	& 2.36e-5	& 3.52e-5	& 5.64e-5	& 2.98e-5	& 2.86e-5	\\
6	& 1.45e-5	& 1.14e-5	& 1.11e-5	& 1.16e-5	& 1.13e-5	& 8.75e-6	\\
7	& 9.18e-6	& 4.68e-6	& 3.87e-6	& 9.31e-6	& 4.25e-6	& 4.09e-6	\\
8	& 4.58e-6	& 1.50e-6	& 1.21e-6	& 5.22e-6	& 1.10e-6	& 1.31e-6	\\
9	& 9.21e-7	& 3.92e-7	& 3.15e-7	& 1.00e-6	& 4.04e-7	& 3.26e-7	\\
10	& 2.67e-7	& 1.56e-7	& 1.98e-7	& 2.83e-7	& 2.08e-7	& 1.87e-7	\\
11	& 1.54e-7	& 4.55e-8	& 6.33e-8	& 1.52e-7	& 4.60e-8	& 6.31e-8	\\
12	& 5.11e-8	& 2.49e-8	& 1.15e-8	& 7.40e-8	& 2.77e-8	& 1.20e-8	\\
13	& 2.39e-8	& 6.15e-9	& 4.93e-9	& 3.06e-8	& 5.40e-9	& 6.39e-9	\\
14	& 1.02e-8	& 2.85e-9	& 2.12e-9	& 1.21e-8	& 2.15e-9	& 2.36e-9	\\
15	& 1.09e-8	& 9.90e-10	& 1.47e-9	& 1.13e-8	& 1.06e-9	& 1.37e-9	\\
\hline
\end{tabular}
\label{tb:9}}
\end{center}
\end{table}

\end{document}